\tikzstyle vertex=[circle, draw, fill=black, inner sep=0.5mm]
\newcommand{\tww}{\text{tww}\xspace}
\newcommand{\sep}{\text{sep}\xspace}
\newcommand{\perm}{\mathfrak{S}}
\newcommand{\Sep}{\mathcal{S}}
\newcommand{\Av}{\text{Av}}
\newcommand{\Ac}{\mathcal{A}}
\newcommand{\Cc}{\mathcal{C}}
\newcommand{\Dc}{\mathcal{D}}
\newcommand{\Ic}{\mathcal{I}}
\newcommand{\Pc}{\mathcal{P}}
\newcommand{\Rc}{\mathcal{R}}
\newcommand{\Nn}{\mathbb{N}}
\newcommand{\card}[1]{\left|#1\right|}
\newcommand{\ceil}[1]{\left\lceil #1 \right\rceil}
\newcommand{\clos}[1]{\overline{#1}}    
\newcommand{\first}{\mathbf{first}}
\newcommand{\last}{\mathbf{last}}
\newcommand{\arity}{ar}
\newtheorem{theorem}{Theorem}[section]
\newtheorem{lemma}[theorem]{Lemma}
\newtheorem{corollary}[theorem]{Corollary}
\newtheorem{claim}[theorem]{Claim}
\newtheorem{remark}[theorem]{Remark}
\theoremstyle{definition}
\newtheorem{question}[theorem]{Question}
\Crefname{figure}{Figure}{Figures}
\newenvironment{claimproof}{\begin{proof}}{\end{proof}}
\title{Factoring Pattern-Free Permutations into Separable ones}
\author{Édouard Bonnet}
\author{Romain Bourneuf}
\author{Colin Geniet}
\author{Stéphan Thomassé}
\affil{Univ.\ Lyon, ENS de Lyon, UCBL, CNRS, LIP, France}
\date{}
\begin{document}

\maketitle

\begin{abstract}
We show that for any permutation $\pi$ there exists an integer $k_{\pi}$ such that every permutation avoiding $\pi$ as a~pattern is a~product of at most $k_{\pi}$ separable permutations.
In other words, every strict class $\cal C$ of permutations is contained in a~bounded power of the class of separable permutations.
This factorisation can be computed in linear time, for any fixed $\pi$.

The central tool for our result is a~notion of width of permutations,
introduced by Guillemot and Marx [SODA '14] to efficiently detect patterns,
and later generalised to graphs and matrices under the name of twin-width.
Specifically, our factorisation is inspired by the decomposition used
in the recent result that graphs with bounded twin-width are polynomially $\chi$-bounded.
As an application, we show that there is a~fixed class $\Cc$ of graphs of bounded twin-width
such that every class of bounded twin-width is a~first-order transduction of $\Cc$.
\end{abstract}

\section{Introduction}\label{sec:intro}

Given a~class $\cal C$ of discrete structures, the arguably preeminent algorithmic task is the one of recognition: does the input belong to $\cal C$? 
This problem is often tied with an effective \emph{construction} of the class $\cal C$ by performing elementary operations on some basic building blocks. 
For instance, totally unimodular matrices~\cite{Seymour80a}, minor-closed classes~\cite{RobertsonS03a}, and perfect graphs~\cite{chudnovsky2006strong} can be constructed from simpler objects, respectively, network matrices, graphs embeddable on low-genus surfaces, and variants of bipartite graphs. 
In this paper we show that \emph{strict} classes of permutations, that is, those avoiding a~fixed pattern, can be constructed from separable permutations (the basic class) via~a bounded number of compositions (the elementary operation).

Given a~positive integer $n$, we denote by $[n]$ the set $\{1, 2, \ldots, n\}$.
Let $n\le m$ be two integers, we say that a~permutation $\pi\in \perm_n$ is a~\emph{pattern} of $\sigma \in \perm_m$ if there is an increasing function $f$ from~$[n]$ to $[m]$ such that $\pi(i)<\pi(j)$ if and only if $\sigma(f(i))<\sigma(f(j))$ for all $i,j \in [n]$. 
Another way of characterizing patterns is to associate to a~permutation $\sigma\in \perm_n$ its $n\times n$ matrix $A(\sigma)=(a_{ij})$ with $a_{ij}=1$ if $j=\sigma(i)$, and $a_{ij}=0$ otherwise. 
Observe that $\pi$ is a~pattern of $\sigma$ if and only if $A(\pi)$ is a~submatrix of $A(\sigma)$. 
For instance, 12345 is a~pattern of $\sigma$ if it contains an increasing subsequence of length five. 
A crucial achievement in permutation patterns is the Guillemot--Marx algorithm, which decides if a~permutation~$\pi$ is a~pattern of $\sigma$ in time $f(\pi) \cdot |\sigma|$, where $|\sigma|$ is the size of $\sigma$.

Patterns readily offer a~complexity notion for permutations: A~permutation is ``simple" if it does not contain a~fixed small pattern. 
We will consider \emph{classes} of permutations, which are assumed closed under taking patterns. 
The existence of a~gap between the class of all permutations and any strict class is illustrated by the Marcus--Tardos theorem, answering the Stanley--Wilf conjecture: Every strict class of permutations has at most $2^{O(n)}$ permutations of size $n$, whereas the class of all permutations obviously has $n!=2^{\Theta(n \log n)}$ such permutations.
From an algorithmic perspective, sequences avoiding a~fixed pattern can be comparison-sorted in almost linear time $O\left(n \cdot 2^{(1 + o(1)) \alpha(n)}\right)$ where $\alpha$ is the inverse Ackermann function~\cite{chalermsook2015bst,kozma2020heaps,chalermsook2023sorting},
while linear algorithms when excluding some specific small patterns have been long known~\cite{knuth1968art,arthur2007sorting}.
Furthermore, as a~generalisation of Guillemot--Marx algorithm,
any property defined using first-order logic (FO) can be tested inside any strict permutation class~$\Cc$ in linear time~\cite{twin-width1}. For instance, given a~fixed permutation~$\tau$, one can decide in linear time if an input $n$-permutation~$\sigma$ in~$\Cc$ is such that every pair of elements $i,j\in [n]$ is contained in a~pattern~$\tau$ of~$\sigma$. 
Observe that even the existence of a~linear-size positive certificate for this seemingly quadratic problem is far from obvious.

Strict classes of permutations are thus significantly simpler, both algorithmically and in terms of growth.
The next question is to construct them from a~basic class using some simple operations. 
In the case of permutations, possibly the most natural elementary operation is the product (or composition).
Furthermore, the class of \emph{separable permutations} is basic in several ways.
It consists of those permutations whose permutation graph is a~cograph; an elementary graph class (which coincides with graphs of twin-width~0).
Like cographs have a~natural auxiliary tree structure (the \emph{cotrees}), separable permutations inherit their own tree structure, the so-called \emph{separating tree}~\cite{BoseBL98}.
Separable permutations are originally themselves defined from the trivial permutation~1, by successively applying direct sums (setting two permutation matrices as diagonal blocks of the new permutation matrix) or skew sums (the same with antidiagonal blocks), or equivalently by closing $\{12,21\}$ under substitution.
They are well known to be the permutations avoiding the patterns 2413 and 3142~\cite{BoseBL98}.

As our main result, we show:
\begin{restatable}{theorem}{mainthm}\label{thm:main}
    For any pattern~$\pi$, there exists $k_\pi = 2^{2^{O(\card{\pi})}}$
    such that every permutation avoiding~$\pi$ is a~product of at most $k_\pi$ separable permutations.
\end{restatable}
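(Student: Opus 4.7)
The plan is to combine a twin-width upper bound for $\pi$-avoiding permutations with an inductive factorisation. By the Guillemot--Marx algorithm, or equivalently by applying the Marcus--Tardos theorem to permutation matrices, every permutation avoiding $\pi$ admits a contraction sequence of width $d_\pi \leq 2^{O(\card{\pi})}$. This reduces \Cref{thm:main} to the following structural lemma: every permutation of twin-width at most $d$ is a product of at most $2^{O(d)}$ separable permutations. Chaining the two bounds yields $k_\pi = 2^{O(d_\pi)} = 2^{2^{O(\card{\pi})}}$, which is exactly the claimed double-exponential.

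I would prove the structural lemma by induction on $d$, drawing inspiration from the decomposition behind polynomial $\chi$-boundedness of bounded twin-width graphs. The base case $d=0$ is the definition of separable permutations. For the inductive step, given $\sigma$ with a $d$-contraction sequence, the aim is to produce a factorisation $\sigma = \sigma_1 \sigma_2 \cdots \sigma_c$, with $c = O(1)$, where each $\sigma_i$ has twin-width at most $d-1$. Applying induction then yields a product of $c \cdot 2^{O(d-1)} = 2^{O(d)}$ separable permutations. To obtain this factorisation, I would work at a carefully chosen layer of the contraction sequence: the partition at that layer groups positions into a constant number of parts whose inter-part structure is captured by a bounded-size quotient pattern. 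The idea is that $\sigma_1$ realises the coarse pattern routing parts into their target positions (itself a product of $O(1)$ separables because the quotient has bounded size), while the remaining factors realise the intra-part permutations, which by construction inherit strictly smaller red-degree complexity and thus strictly smaller twin-width.

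The main technical obstacle I anticipate is implementing this "peel a layer" step rigorously. Unlike the partitioning step in the $\chi$-boundedness proof—where one may simply colour vertices—factoring a permutation as a \emph{product} is much more rigid: each factor must itself be a bona fide permutation of $[n]$, and successive compositions must exactly reconstruct $\sigma$. In particular, the "within-part" permutations need not automatically have smaller twin-width without a careful selection of the contraction level and a careful definition of the sub-permutations (possibly requiring a refined width notion tailored to permutations, in the spirit of Guillemot--Marx width). Verifying that the recursion genuinely drops twin-width by one, and that the composition of Marcus--Tardos with the factorisation lemma produces the announced $2^{2^{O(\card{\pi})}}$ bound with no hidden blow-up, is the crux of the argument.
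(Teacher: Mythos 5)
Your outer skeleton agrees with the paper: first use the Marcus--Tardos/Guillemot--Marx bound to pass from pattern-avoidance to bounded twin-width, then prove a ``structural lemma'' that permutations of small width factor into $2^{O(\text{width})}$ separable permutations. And your instinct that the crux is the inductive peel-a-layer step is exactly right. But the peel-a-layer step as you describe it has a genuine gap, and the way the paper resolves it is by changing the induction parameter and the factorisation scheme in two ways you have not anticipated.

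First, the paper does \emph{not} do induction on twin-width of the biorder. A permutation $\sigma = (X,<_1,<_2)$ of bounded twin-width can have adjacency matrices (for either of the two orders) with arbitrarily large mixed minors (e.g.\ the $2$-shuffle obtained by ordering all odds before all evens avoids $321$ but has $\Omega(n)$-grids in its adjacency matrix ordered by $<_1$). So ``twin-width $\le d-1$'' is not a workable inductive invariant, and there is no factorisation $\sigma = \sigma_1\cdots\sigma_c$ with each $\sigma_i$ of \emph{twin-width} $d-1$ that the argument produces. What the contraction sequence gives you (\cref{lem:tww-mixed-minor}) is a \emph{third} linear order $<_3$ such that both adjacency matrices, when rows and columns are ordered by $<_3$, are $(2t+2)$-mixed-free. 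This yields a two-factor decomposition $\sigma = \sigma_2 \circ \sigma_1^{-1}$ with $\sigma_1 = (X,<_3,<_1)$, $\sigma_2 = (X,<_3,<_2)$, and the induction is then run on the \emph{almost-mixed-minor-freeness} of $\sigma_1,\sigma_2$ (\cref{lem:pattern-decomp}), a parameter of the ordered matrix, not of the biorder. Your aside about needing a ``refined width notion'' points at precisely this issue, but proposing ``Guillemot--Marx width'' is circular, since that is again twin-width.

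Second, once you are inducting on the excluded almost-mixed-minor size $k$, the factorisation is not ``$c$ permutations each one unit simpler''. What the paper actually produces is a factorisation through a \emph{delayed structured tree} (\cref{lem:delayed-construction,lem:delayed-decomp}), whose node-local permutations are shown to be $2$-shuffles of $(k-1)$-almost-mixed-free permutations (\cref{clm:children-perm}); controlling the mixity between sibling blocks requires a proper colouring of a circle-graph-like mixed graph (\cref{lem:mixed-degenerate}) combined with a logarithmic number of separable shuffle factors (\cref{lem:shuffle-decomp}, \cref{lem:grandchildren-decomp}). This is where the triple factor in $s = 3r + 12\ceil{\log k}+28$ comes from, and it is quite far from a direct $O(1)$-factor peeling of a contraction layer. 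Your proposal correctly identifies that these are the hard parts, but it leaves them entirely unresolved, and as written the inductive invariant on which it rests would not go through.
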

Combining~\cref{thm:main} with the definition of separable permutations, every permutation of~\emph{$\Av(\pi)$}, the set of permutations avoiding the pattern $\pi$, can be built from the trivial permutation~1 via direct and skew sums, followed by a~bounded-length product.
Conversely, remark that for any~$c$, the class of products of~$c$ separable permutations
avoids some pattern, since it contains only~$2^{O(cn)}$ permutations on~$n$ elements.

The proof of \cref{thm:main} is effective,
and yields a~fixed-parameter tractable (FPT) algorithm to compute the factorisation.
With some more work, we show how to implement it in linear time.

\subsection{Shortest separable decompositions}
We call \emph{separable index} of a~permutation~$\sigma$, denoted~$\sep(\sigma)$,
the smallest~$k$ such that~$\sigma$ is the product of~$k$ separable permutations.
Our main result then states that permutations avoiding a~pattern have bounded separable index.
Our upper bound is doubly exponential: we show that permutations avoiding a~pattern~$\pi$ of length~$k$
have separable index at most~$2^{2^{O(k)}}$.
It is natural to ask to which extent this bound can be improved.

For specific patterns~$\pi$, this bound may be very low:
if~$\pi$ is the decreasing permutation of length~$k$,
then it is relatively simple to show that permutations avoiding~$\pi$ have separable index~$O(\log k)$,
see \cref{lem:shuffle-decomp} below.
However this situation is unusual:
from lower bounds on the growth of pattern-avoiding classes due to Fox~\cite{fox2013stanleywilf},
it follows that for almost all patterns~$\pi$ of size~$k$,
the maximum separable index of permutations avoiding~$\pi$ is at~least polynomial, more
precisely at~least~$\Omega(k^{1/4 - \epsilon})$ for any $\epsilon > 0$.
This still leaves a~huge gap between our double-exponential upper bound and the polynomial lower bound.
In principle the following question could be met with a~positive answer.

\begin{question}
Is the maximum separable index among permutations avoiding a~pattern of size $k$ polynomial in~$k$?
\end{question}
As an easier first step, one could try and bring the upper bound down to singly exponential.

From a~different perspective, one may consider
the problem of computing~$\sep(\sigma)$, given some permutation~$\sigma$.
Our result implies an FPT approximation.
Given~$k \in \Nn$, there is a~pattern~$\pi$ that every product of~$k$ separable permutations avoids.
Conversely, by our result, any permutation avoiding~$\pi$
is product of at most~$f(k)$ separable permutations for some function~$f$.
Applying the pattern recognition algorithm of Guillemot and Marx~\cite{guillemot14patterns},
we can either detect that~$\sigma$ contains~$\pi$ and conclude that~$\sep(\sigma) > k$,
or find that~$\sigma$ avoids~$\pi$ and conclude that~$\sep(\sigma) \le f(k)$.
Here, the size of~$\pi$ is exponential in~$k$,
hence the approximation function~$f$ is triple exponential in~$k$.
We ask whether this can be improved, possibly up to an exact algorithm:
\begin{question}
    What is the parameterised complexity of computing~$\sep(\sigma)$,
    and of approximating it within a~constant factor?
\end{question}

\subsection{Applications to graphs}
At a~high level, our result states that permutations avoiding~$\pi$
can be decomposed into permutations avoiding a~fixed pattern~$\tau$,
where the length of the decomposition depends on~$\pi$, but~$\tau$ does not.
Trading patterns with twin-width, we get results on graphs
that convey a~similar flavour of transformation into structures with universally bounded twin-width.
For sparse graphs, this transformation is the $d$-\emph{subdivision},
which consists of replacing every edge in a~given graph by a~path of length~$d$.
\begin{restatable}{theorem}{twwsubdivision}
  \label{thm:sparsetww-subdivision}
  There is a~universal constant~$c$ and a~function~$f$ such that
  for any graph~$G$ with twin-width~$k$ and no~$K_{t,t}$-subgraph,
  the $f(k,t)$-subdivision of~$G$ has twin-width at most~$c$.
\end{restatable}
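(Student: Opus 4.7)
The plan is to lift \cref{thm:main} from permutations to graphs via the matrix/permutation view of twin-width, realising the layered structure of the separable decomposition as a subdivision of~$G$. Concretely, a factorisation $\sigma = \sigma_1 \circ \cdots \circ \sigma_\ell$ into separable permutations can be viewed as an $(\ell+1)$-layered graph whose consecutive layers are linked by a cograph matching (since separable permutations are exactly those whose permutation graph is a cograph). Intermediate layer vertices correspond to subdivision vertices on the original edges, and the whole layered graph should admit a contraction sequence of bounded twin-width obtained from the cotrees of the cograph matchings.

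For the first reduction, I would start from a contraction sequence witnessing $\tww(G) \le k$, which yields a linear ordering of~$V(G)$ whose ordered adjacency matrix avoids a fixed mixed minor of size bounded in~$k$. Since $G$ is $K_{t,t}$-free, the $1$-entries of this matrix can be covered by $O_{k,t}(1)$ permutation matrices, each encoding a permutation that avoids a fixed pattern~$\pi$ of size bounded by a function of $k$ and $t$. Applying \cref{thm:main} to each of these permutations yields a product decomposition into at most $k_\pi$ separable permutations.

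Combining the $O_{k,t}(1)$ decompositions, the edges of~$G$ can be encoded by $O_{k,t}(1)$ layered cograph-matching structures of depth at most $k_\pi$. Setting $d = f(k,t)$ large enough to accommodate all these layers along each subdivision path, the $d$-subdivision of~$G$ contains all the intermediate layer vertices needed to realise these structures. To show that this subdivision has twin-width bounded by a universal constant~$c$, I would build a contraction sequence that processes one cograph layer at a time: within a layer, the cotree of the corresponding cograph matching yields a twin-width-$0$ contraction sequence on the layer's endpoints, while the other layers and their subdivision paths remain intact, serving as buffers so that red edges produced by contractions in the active layer do not reach other layers. With careful bookkeeping, the red-degree at every step of this sequence should be bounded by an absolute constant~$c$, independent of $k$ and $t$.

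The main obstacle will be this last step: designing the contraction sequence so that red edges created during the processing of one layer remain confined and do not interfere with later processing, keeping the red-degree bounded by a \emph{universal} constant, independent of~$\ell = k_\pi$. Even with the buffer of subdivision vertices, contractions within a cograph may produce red edges spanning several layers, and one must verify that the cotree-guided order of contractions avoids such propagation, or that the propagation is absorbed by the subdivision paths. A secondary difficulty lies in Step~1, where the precise combinatorial translation from ``graph twin-width plus $K_{t,t}$-freeness'' to ``bounded number of bounded-pattern permutations'' may require additional structural results on sparse bounded-twin-width matrices.
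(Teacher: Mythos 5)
Your high-level plan — factor via \cref{thm:main} into separable permutations, realise the factors as a layered path system, and identify that system with a subdivision of~$G$ — is exactly the paper's framework. However, two of your steps have genuine problems, one fatal.

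The fatal one is your step~1. You propose to cover the $1$-entries of the ordered adjacency matrix of~$G$ by $O_{k,t}(1)$ (partial) permutation matrices, each avoiding a small pattern. This is impossible in general: a star~$K_{1,n}$ is $K_{2,2}$-subgraph-free and has twin-width~$0$, yet its adjacency matrix has an entire row of~$1$'s, so no bounded number of permutation matrices can cover it. Bounded twin-width plus $K_{t,t}$-freeness controls grids in the adjacency matrix (and hence average degree), but not maximum degree, which is what a cover by few permutation matrices would require. The paper avoids this obstacle by not working with the vertex adjacency matrix at all: it fixes an arbitrary orientation of~$E(G)$ and defines a \emph{single} genuine permutation $\sigma = (\vec E, <_s, <_t)$ on the oriented edge set, where $<_s$ and $<_t$ are the lexicographic orders by (start, end) and (end, start) respectively. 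It then shows via \cref{thm:sparse-tww-grids} that this $\sigma$ avoids a pattern of size controlled by $k,t$ — and this is the only permutation you need, with each edge of~$G$ corresponding to one element of the permutation, so the path system directly realises the subdivision.

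Your step~3 (building a contraction sequence by processing one cograph layer at a time) is the place you flag as the main obstacle, and indeed the red-degree bookkeeping is delicate. The paper circumvents it entirely: rather than exhibit a contraction sequence, it proves (\cref{lem:grid-path-system}) that the path system's adjacency matrix, in the canonical layered ordering, has no $11$-grid (since separable permutations have no $3$-grid), and after attaching the vertices of~$V$ to the outer layers~$X_0, X_m$ the whole matrix has no $15$-grid. The universal twin-width bound then follows from the grid theorem, with no per-layer contraction analysis at all. Your approach might be salvageable with effort, but the grid-count route is both simpler and clearly correct.
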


This can be extended to dense graphs (and more generally binary relational structures)
using \emph{first-order (FO) transductions},
which roughly speaking are transformations that can be described using first-order logic.
\begin{restatable}{theorem}{FOdesc}
  \label{thm:tww-FO-desc}
  There is a~fixed strict class~$\Cc$ of permutations
  such that for any class~$\Dc$ of binary structures, $\Dc$ has bounded twin-width iff
  there exists an FO transduction~$\Phi$ satisfying~$\Dc \subseteq \Phi(\Cc)$.
\end{restatable}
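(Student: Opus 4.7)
I take $\Cc$ to be the class of separable permutations, which is strict ($\Cc = \Av(2413, 3142)$) and has twin-width~$0$. The ``if'' direction is routine: FO transductions preserve bounded twin-width, so $\Phi(\Cc)$ always has bounded twin-width.

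For the converse, let $\Dc$ have bounded twin-width. By the known characterisation of bounded twin-width classes of binary structures as FO transductions of \emph{some} strict permutation class (from the twin-width series, and in the background of this paper), there exist a pattern~$\pi$ and an FO transduction~$\Phi_0$ with $\Dc \subseteq \Phi_0(\Av(\pi))$. Since FO transductions compose, it suffices to produce an FO transduction~$\Psi$ with $\Av(\pi) \subseteq \Psi(\Cc)$; then $\Phi := \Phi_0 \circ \Psi$ witnesses the theorem. Applying \cref{thm:main}, each $\sigma \in \Av(\pi)$ admits a decomposition $\sigma = \tau_1 \cdots \tau_{k_\pi}$ with separable~$\tau_j$. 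The idea is to encode the whole tuple $(\tau_j)_j$ as a single coloured separable permutation $\rho \in \Cc$ of size $O(k_\pi n)$, formed by an appropriate $\oplus/\ominus$-combination of the $\tau_j$'s (or their inverses) interleaved with small ``linking'' gadgets, together with a finite palette of colours labelling blocks and internal roles. Separability of~$\rho$ follows from the closure of separable permutations under $\oplus$ and $\ominus$. The recovery formula $\varphi(x, y) \equiv \sigma(x) = y$ then existentially quantifies one witness in each intermediate block and requires consecutive witnesses to be paired by the FO-definable relation built into the gadgets --- a fixed FO formula with $k_\pi$ quantifiers, which defines~$\Psi$.

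The main obstacle is the design of the linking gadgets. The pairing ``position~$i$ of~$\tau_j$ matches position~$i$ of~$\tau_{j+1}$'' must be expressible by a fixed FO formula on~$\rho$, yet FO cannot count up to unbounded~$i$, so a plain direct sum $\tau_1 \oplus \cdots \oplus \tau_{k_\pi}$ carries no FO-definable cross-block pairing. The gadgets must therefore impose a structural pairing between the $i$-th markers of consecutive blocks while remaining compatible with separability and with a bounded colour palette. I would exploit the recursive cotree / separating-tree structure of separable permutations to embed an appropriate shuffling between consecutive $\tau_j$'s; simultaneously verifying separability of~$\rho$ and FO-definability of the pairing relation is the technically delicate step of the argument.
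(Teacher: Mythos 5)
Your plan begins similarly to the paper (use \cref{thm:FO-desc2} to reduce to a pattern-avoiding class, then use \cref{thm:main} to factor into separable permutations and recover the permutation by an FO transduction depending only on the number of factors), but it diverges at the crucial encoding step, and there the gap you flag is not a technicality --- it is the whole problem. Encoding $\sigma = \tau_{k_\pi}\cdots\tau_1$ into a \emph{single separable permutation} $\rho$ requires an FO-definable pairing between position $i$ in block $j$ and position $i$ in block $j+1$. Any such pairing is, in the natural order of $\rho$, an ``interleaving'' relation between two intervals, and if this pairing were explicit in the order structure it would create arbitrarily large $2413/3142$ patterns, destroying separability; if instead you try to leave the blocks disjoint as in $\tau_1 \oplus \cdots \oplus \tau_{k_\pi}$, FO cannot recover a position-by-position bijection between consecutive blocks, because that is a counting task. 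You do not produce any gadget that squares this circle, and I do not see one; the ``exploit the cotree structure'' suggestion is not a construction.

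The paper circumvents this precisely by \emph{not} encoding the factorisation as a permutation. It uses the \emph{path system representation}: an ordered graph with $m+1$ layered copies of $[n]$ and an explicit matching edge from $x_{j-1}$ to $\sigma_j(x)_j$. Alignment across layers is then trivial --- the $i$-th element of one layer is linked to its image in the next layer by an actual edge, and the FO formula just follows a path of fixed length $m$. \cref{lem:grid-path-system} shows these ordered graphs have universally bounded twin-width because the factors are separable (no $3$-grid), and \cref{lem:perm-FO-desc} packages this as: there is a universal constant $c$ and, per pattern $\pi$, an FO interpretation that recovers any $\sigma \in \Av(\pi)$ from a structure of twin-width $\le c$. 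The final step is to apply \cref{thm:FO-desc2} a \emph{second time}, this time to the universal class $\Cc_2$ of path systems, to convert back into a fixed pattern-avoiding permutation class $\Cc$ (which, importantly, need not be $\Sep$). In short, the paper trades ``stay inside permutations'' for ``go through ordered graphs and invoke \cref{thm:FO-desc2} twice,'' which is exactly what lets it avoid the alignment obstruction you identified. To salvage your approach you would either need to prove directly that $\Av(\pi)$ is a transduction of $\Sep$ (a statement the paper neither proves nor claims), or adopt the path-system detour.
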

We then say that \emph{$\Dc$ is an FO transduction of $\Cc$}.
\cref{thm:tww-FO-desc} nicely echoes similar characterizations for linear clique-width and clique-width, namely the fact that a~graph class has bounded linear clique-width if and only if it is an FO transduction of a~linear order, and bounded clique-width if and only if it is an FO transduction of a~tree order~\cite{Colcombet07}.
Fittingly, these results can also be phrased in terms of pattern-avoiding permutation classes.
Indeed a~class has bounded linear clique-width if and only if it is an FO transduction of $\Av(21)$, and bounded clique-width if and only if it is an FO transduction of $\Av(231)$, or equivalently an FO transduction of the class of separable permutations (see for example~\cite[Proposition 8.1]{tww-perm}).

Reconstructing a~graph~$G$ from the $d$-subdivision of~$G$ with fixed~$d$ is a~simple example of FO transduction,
hence \cref{thm:sparsetww-subdivision} is a~special case of \cref{thm:tww-FO-desc} utilizing a~specific transduction.
\Cref{thm:sparsetww-subdivision,thm:tww-FO-desc} are based on a~representation of
any factorisation $\sigma = \sigma_m \circ \dots \circ \sigma_1$ as a~system of paths between ordered sets of vertices:
each permutation~$\sigma_i$ is represented by a~matching between two orders,
which are joined together into paths of length~$m$, see \cref{fig:path-system} for an example.
\begin{figure}
    \begin{center}
        \begin{tikzpicture}
            \def\h{0.6}
            \foreach \i in {0,...,3}{
                \foreach \x in {1,...,6}{
                    \node[vertex] (\i\x) at (2*\i,\h*\x) {};
                }
                \node (X\i) at (2*\i,0) {$X_\i$};
            }

            \foreach \x/\sx in {1/1,2/6,3/3,4/5,5/2,6/4}{
                \node (\sx) at (-0.5,\h*\x) {$\sx$};
                \node (s\x) at (6.5,\h*\x) {$\x$};
            }

            \foreach \x/\sx in {1/1,2/2,3/3,4/5,5/4,6/6}{
                \draw (0\x) -- (1\sx);
            }
            \foreach \x/\sx in {1/1,2/4,3/5,4/2,5/3,6/6}{
                \draw (1\x) -- (2\sx);
            }
            \foreach \x/\sx in {1/1,2/2,3/5,4/6,5/3,6/4}{
                \draw (2\x) -- (3\sx);
            }
        \end{tikzpicture}
    \end{center}
    \caption{A factorisation of~163524 into three separable permutations, represented as a~path system.}
    \label{fig:path-system}
\end{figure}
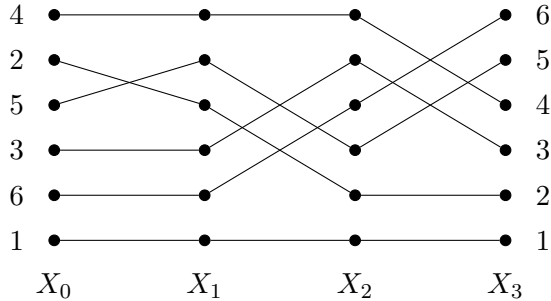
When the permutations~$\sigma_i$ are separable (or more generally avoid any fixed pattern),
these path system representations have bounded twin-width, independently of the length of the factorisation.
Furthermore, for any fixed length~$m$, there is an FO transduction depending only on~$m$
that can reconstruct~$\sigma$ from the path system representation.
We obtain \cref{thm:tww-FO-desc} by combining this with the following result of Bonnet et al.
\begin{theorem}[\cite{tww-perm}]
    \label{thm:FO-desc2}
    For any class~$\Dc$ of binary structures with bounded twin-width,
    there exists a~pattern avoiding class of permutations~$\Cc$
    and an FO transduction~$\Phi$ such that~$\Dc \subseteq \Phi(\Cc)$.
\end{theorem}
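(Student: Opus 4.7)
The plan is to set $\Cc$ to be the class of separable permutations (equivalently $\Av(2413,3142)$), viewed as binary structures equipped with their two natural linear orders. This is a fixed strict permutation class, and I would establish the two directions of the equivalence independently.

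For the direction ``$\Dc \subseteq \Phi(\Cc) \Rightarrow$ bounded twin-width'': separable permutations, whose permutation graphs are cographs, have twin-width~$0$, so $\Cc$ has bounded twin-width. Since FO transductions are known to preserve bounded twin-width (a standard result of the twin-width framework), $\Dc \subseteq \Phi(\Cc)$ must also have bounded twin-width.

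For the forward direction, I would proceed in three stages. First, \cref{thm:FO-desc2} provides a pattern-avoiding permutation class $\Cc_1 \subseteq \Av(\pi)$ and an FO transduction $\Phi_1$ with $\Dc \subseteq \Phi_1(\Cc_1)$. Second, by \cref{thm:main}, each $\sigma \in \Cc_1$ factors as $\sigma = \sigma_k \circ \cdots \circ \sigma_1$ with $k \le k_\pi$ and each $\sigma_i$ separable of the same size as~$\sigma$. Third, I would encode such a factorisation as the direct sum $\tau := \sigma_1 \oplus \cdots \oplus \sigma_k$, which is again a separable permutation belonging to $\Cc$, decorated with $k$ unary colours marking the blocks (non-deterministic colouring being part of the standard freedom of FO transductions). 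It then suffices to construct a single FO transduction $\Phi_2$ that recovers $\sigma$ from the coloured $\tau$, since the composition $\Phi_1 \circ \Phi_2$ provides the required transduction from $\Cc$ to $\Dc$, and FO transductions compose.

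The key work is in constructing $\Phi_2$. Within each coloured block~$i$ the relation $\sigma_i$ is read off directly from $\tau$; the wiring between the output of block~$i$ and the input of block~$i+1$ is the positional identification by rank within each block, which is FO-definable from the two linear orders on~$\tau$. Composing the $k$ matchings, that is, expressing ``$\sigma(p) = q$'' as the existence of a sequence $p = x_0, x_1, \dots, x_k = q$ with consecutive pairs linked alternately by some $\sigma_i$ and by positional identification, uses $k-1$ existentially quantified intermediate variables; since $k$ is a fixed constant (depending on $\Phi_1$, but not on the input), this is expressible as a single FO formula. The main obstacle is verifying carefully that the direct-sum-plus-colours encoding genuinely realises the path-system representation of \cref{fig:path-system} and that the resulting bounded-length composition formula captures~$\sigma$. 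Once this is in place, the rest of the proof is a routine combination of \cref{thm:FO-desc2}, \cref{thm:main}, and the closure of FO transductions under composition.
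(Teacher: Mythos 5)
You were asked to prove \cref{thm:FO-desc2}, which is an external result quoted from~\cite{tww-perm}; note that in that statement the class~$\Cc$ is allowed to depend on~$\Dc$, and the present paper never proves it---it only cites it and uses it as a black box. Your proposal is circular: the first stage of your ``forward direction'' invokes \cref{thm:FO-desc2} itself to produce the class~$\Cc_1$ and the transduction~$\Phi_1$, i.e.\ you assume exactly the statement to be proved. What you have actually sketched is the paper's \cref{thm:tww-FO-desc} (a \emph{fixed} class~$\Cc$, stated as an equivalence), which the paper derives by combining \cref{thm:FO-desc2} with \cref{thm:main}, \cref{lem:perm-FO-desc}, \cref{thm:pattern-tww} and \cref{thm:transduction-tww}; the genuine content of \cref{thm:FO-desc2} is established in~\cite{tww-perm} by arguments none of which appear in your proposal.

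Even judged as a proof of \cref{thm:tww-FO-desc}, the key encoding step has a gap. You encode a factorisation $\sigma = \sigma_k \circ \dots \circ \sigma_1$ as the coloured direct sum $\sigma_1 \oplus \dots \oplus \sigma_k$ and claim that the ``positional identification by rank within each block'' between consecutive blocks is FO-definable from the two linear orders of the permutation. It is not: asserting that $x$ in block~$i$ and $y$ in block~$i+1$ occupy the same rank within their blocks amounts to comparing the cardinalities of two initial segments, which first-order logic cannot express. This is precisely why the paper works with the path-system representation (\cref{fig:path-system}, \cref{lem:perm-FO-desc}): there, the matchings between consecutive copies are explicit edges of the structure, so ``joined by a path of length~$m$'' is FO-definable for fixed~$m$; the universal twin-width bound then comes from \cref{lem:grid-path-system}, and a further application of \cref{thm:FO-desc2} (to the class of path systems, whose twin-width is bounded by a universal constant) is what brings the encoding back to a fixed pattern-avoiding permutation class. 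If you want to stay with permutations decorated by colours only, you would need a different encoding in which the inter-block wiring is recoverable in FO, which the plain direct sum does not provide.
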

Notice that the difference with \cref{thm:tww-FO-desc} is that the class~$\Cc$ depends on~$\Dc$.

\subsection{Overview of the proof of \cref{thm:main}}

We see an $n$-element permutation $\sigma$ as two linear orders $<,\prec$ over $[n]$. 

\paragraph*{Mixed minor free permutations.}
We fix a~permutation $\pi$, and consider permutations $\sigma$ in $\Av(\pi)$, the class of all permutations avoiding $\pi$ as a~pattern.
This class has bounded twin-width~\cite{guillemot14patterns,twin-width1}, and it was further proven that there is a~third linear order $<_{\tww}$ over $[n]$ such that the adjacency matrix\footnote{At this point, we do not wish to concern the reader with the actual encoding.} of the binary structure $([n],<,\prec)$ has no $c_\pi$-mixed minor for some $c_\pi=2^{O(\card{\pi})}$, where a~\emph{\mbox{$k$-mixed} minor} of a~matrix $M$ is a~$k$-by-$k$ division of $M$ into consecutive blocks of rows and columns such that each of the $k^2$ cells formed by the division has at least two distinct row vectors and at least two distinct column vectors.
We then say that such cells are \emph{mixed}.

It was cleverly observed that \emph{$k$-almost mixed minors} is a~better notion to conduct induction on~\cite{tww-quasi-chi-bounded}, where one waives the mixedness condition off for the diagonal cells. 
Besides, one can remark that mixed minors and almost mixed minors are linearly tied, in the sense that if a~matrix has a~$k$-almost mixed minor it has a~$\lfloor k/2 \rfloor$-mixed minor (while a~mixed minor is a~fortiori an almost mixed minor).

If we summarize the situation after these opening moves, we have a~permutation $\sigma$ (in fact, two permutations whose product equals $\sigma$) whose matrix encoding has no $k$-almost mixed minor with $k=2c_\pi$.
We wish to decompose this permutation as a~product of $g(k)$ permutations 
 all of which have matrix encodings without $(k-1)$-almost mixed minor.
 If this scheme works while $k \ge 3$, we will have written our permutation as a~product of $f(k)$ permutations without 2-almost mixed minor, which can be easily shown to be separable permutations.
 (Indeed the two excluded permutations 2413 and 3142 both admit a~2-mixed minor.)
\cref{thm:main} will then come from an appropriate choice for the function~$g$, which happens to make~$f$ single exponential.

\paragraph*{Delayed substitutions.}
We decompose $\sigma$ onto a~tree $T$ called \emph{delayed structured tree}.
This tree is inspired by the work of Pilipczuk and Sokołowski~\cite{tww-quasi-chi-bounded}, and was formally introduced by Bourneuf and Thomassé~\cite{tww-poly-chi-bounded} on their way to show that every graph class of bounded twin-width is polynomially $\chi$-bounded (i.e. can be properly colored with a~number of colors polynomial in their maximum clique size).
We now detail how to build an appropriate delayed structured tree in the particular case of permutations.
This will serve as an informal definition of \emph{delayed structured trees} and \emph{delayed substitutions}, crucial elements of our proof.

Here it helps to renormalize $(<,\prec)$ such that $\prec$ is the natural order over $[n]$.
This totally fixes the successor relation of $<$ to an ordering eventually matching the left-to-right order of the leaves of $T$ (as represented in~\cref{fig:delayed-dec}); leaves, which are in one-to-one correspondence with $[n]$.
Let $a_1 < a_2 < \ldots < a_n$ be the ordering of $<$.
We recursively partition $\langle a_1,a_2,\ldots,a_n \rangle$ in the following way.
At the start, the root of $T$ (the only node thus far) is labeled with the list $\langle a_1,a_2,\ldots,a_n \rangle$.

If $\{a_i,a_{i+1},\ldots,a_j\}$ is a~non-singleton interval along $\prec$ (i.e. if $\langle a_i,a_{i+1},\ldots,a_j \rangle$ can be reordered as $j-i+1$ consecutive integers), as it initially occurs with $i=1$ and $j=n$ (when $n>1$), we arbitrary split $\langle a_i,a_{i+1},\ldots,a_j \rangle$ into two non-empty lists $\langle a_i,a_{i+1},\ldots,a_k \rangle$ and $\langle a_{k+1},\ldots,a_j \rangle$, each labeling a~child of the node labeled by $\langle a_i,a_{i+1},\ldots,a_j \rangle$.
Lists $\langle a_i \rangle$ of size 1 are given a~unique child, a~leaf labeled by $a_i$.
Finally, the interesting case is when $\{a_i,a_{i+1},\ldots,a_j\}$ does not consist of a~single interval of $\prec$, but several, say, $I_1, \ldots, I_s$ with $s \ge 2$.
Then we cut $\langle a_i,a_{i+1},\ldots,a_j \rangle$ into $q+1$ sublists $\langle a_i,a_{i+1},\ldots,a_{h_1} \rangle$, $\langle a_{h_1+1},a_{h_1+2},\ldots,a_{h_2} \rangle$, $\ldots$, $\langle a_{h_q+1},a_{h_q+2},\ldots,a_j \rangle$ that are maximal for the property of being contained in a~single $I_\ell$.

Let us have a~look, in~\cref{fig:delayed-dec}, at an example of delayed structured tree if $\sigma$ is such that the ordering of $<$ is $3,1,10,5,16,9,8,4,2,6,7,11,17,15,13,12,18,14$.
\begin{figure}[!ht]
\centering
  \begin{tikzpicture}[node/.style={fill,circle,inner sep=0.06cm},
  node2/.style={circle,inner sep=0.05cm},
 arc/.style={very thick,-{>[length=2mm, width=2mm]}}]
  \def\s{1.5} 
      \foreach \i/\j\/\l in {5/6/a, 0/5/b1,10/5/b2,
       -2/4/c1,0/4/c2,2/4/c3, 
       7/4/c4,9/4/c5,11/4/c6,13/4/c7,
       -3/3/d1,-2.33/3/d2,-1.66/3/d3,-1/3/d4, 0/3/d5, 0.66/3/d6,1.33/3/d7,2/3/d8,2.66/3/d9,3.33/3/d10,
       7/3/d11,8.33/3/d12,9.66/3/d13,11/3/d14,13/3/d15,
       -3/2/e1,-2.33/2/e2,-1.66/2/e3,-1/2/e4,   0.4/2/e5,0.92/2/e6,1.33/2/e7,2/2/e8,2.4/2/e9,2.92/2/e10,3.33/2/e11,
       8.33/2/e12,9.33/2/e13,10/2/e14,
       0.4/1/f1,0.92/1/f2,2.4/1/f3,2.92/1/f4, 9.33/1/f5,10/1/f6}{
       \node[node] (\l) at (\i,\s * \j) {} ;
      }

      \foreach \i/\j in {a/b1,a/b2, b1/c1,b1/c2,b1/c3, b2/c4,b2/c5,b2/c6,b2/c7,
      c1/d1,c1/d2,c1/d3,c1/d4, c2/d5, c3/d6,c3/d7,c3/d8,c3/d9,c3/d10,
      c4/d11, c5/d12,c5/d13, c6/d14, c7/d15,
      d1/e1,d2/e2,d3/e3,d4/e4,
      d6/e5,d6/e6, d7/e7, d8/e8, d9/e9,d9/e10, d10/e11,
      d12/e12,d13/e13,d13/e14,
      e5/f1,e6/f2,e9/f3,e10/f4, e13/f5, e14/f6}{
       \draw[very thin] (\i) -- (\j) ;
      }
      
      \def\o{0.25}
      \foreach \i/\j\/\l in {-3/1.5/3,-2.33/1.5/1,-1.66/1.5/10,-1/1.5/5,
        0/2.5/16,
       0.4/0.5/9,0.92/0.5/8, 1.33/1.5/4,2/1.5/2, 2.4/0.5/6,2.92/0.5/7, 3.33/1.5/11,
       7/2.5/17, 8.33/1.5/15, 9.33/0.5/13,10/0.5/12, 11/2.5/18,13/2.5/14}{
        \node at (\i,\s * \j + \s * \o) {$\l$} ;
      }

      \def\gr{green!50!black}
      \def\lgr{green!80!black}
      \def\yellow{yellow!80!black}
      \foreach \i/\j/\b/\c in {c1/c3/15/blue,c3/c7/15/blue,c7/c5/15/blue,c5/c2/15/blue,c2/c4/15/blue,c4/c6/15/blue,
     e2/e1/10/orange,e1/e4/25/orange,e4/e3/10/orange,
     d2/d8/20/\gr,d8/d1/24/\gr,d1/d7/20/\gr,d7/d4/15/\gr,d4/d9/20/\gr,d9/d3/28/\gr,d3/d10/20/\gr,d10/d5/18/\gr,
     d13/d15/25/gray,d15/d12/25/gray,d12/d11/20/gray,d11/d14/25/gray,
     e8/e7/10/purple,e7/e9/35/purple,e9/e10/10/purple,e10/e6/20/purple,e6/e5/10/purple,e5/e11/20/purple,
     f2/f1/10/cyan,f3/f4/10/\lgr,
     e14/e13/10/\yellow,e13/e12/10/\yellow,
     f6/f5/10/magenta}{
        \draw[arc,\c] (\i) to [bend left=\b] (\j) ;
      }
      \foreach \i/\c in {a/blue,c1/orange,b1/\gr,b2/gray,c3/purple,d6/cyan,d9/\lgr,c5/\yellow,d13/magenta}{
      \node[node2,fill=\c] at (\i) {} ;   
      }
  \end{tikzpicture}
\caption{A well-chosen delayed structured tree of a~permutation $\sigma$ encoded as two linear orders $<,\prec$ over $[18]$, where $<$ is the left-to-right order on the leaves $3, 1, 10, 5, 16, 9, \ldots$ whereas $\prec$ is the natural order $1, 2, 3, \ldots$ 
The permutations $(<_t,\prec_t)$ over the grandchildren of a~node $t$ in a~given colour are drawn in the same colour by the successor relation of $\prec_t$ ($<_t$ being the left-to-right order). If all these permutations are in $\Cc$, $\sigma$ is said to be \emph{obtained by delayed substitution from~$\Cc$}.}
\label{fig:delayed-dec}
\end{figure}
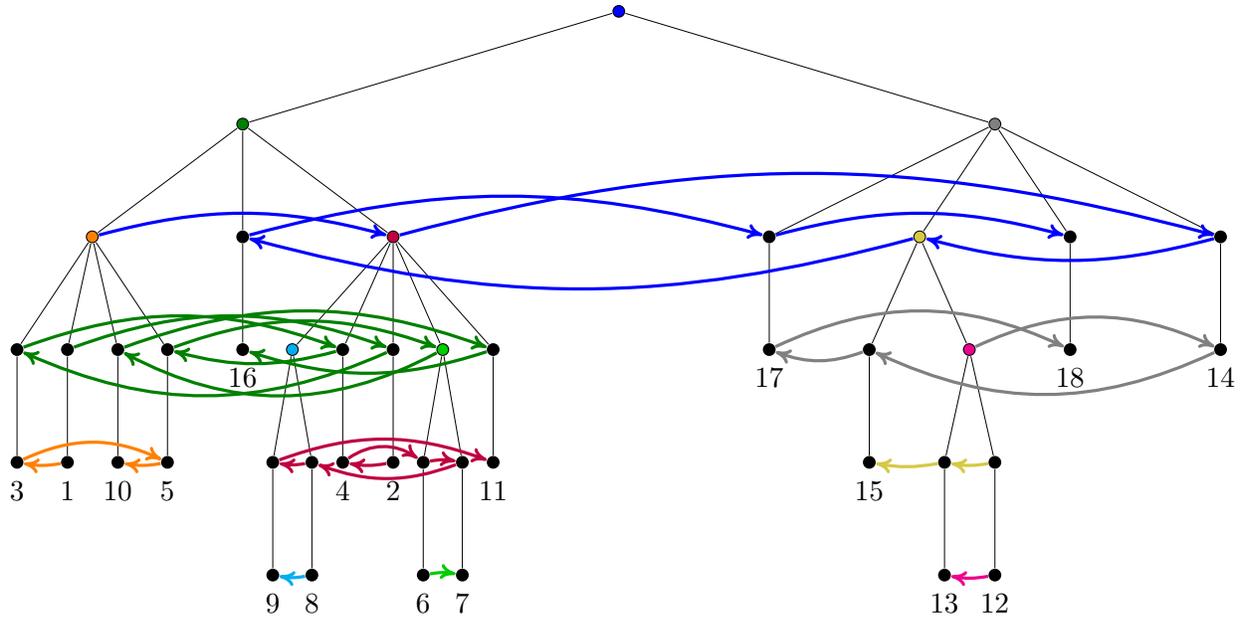
At the root, the rules specify that the list should be split in two (arbitrarily), and we accordingly chose to cut the first sublist after 11.
Moving on to the left child of the root, the current set is $[1,11] \cup \{16\}$ (two intervals), hence its three children labeled $\langle 3, 1, 10, 5 \rangle$, $\langle 16 \rangle$, $\langle 9,8,4,2,6,7,11 \rangle$.

Note that the figure also displays ``local" linear orders (in different colours) over the grandchildren of a~given node $t$ (of that same colour).
These orders $\prec_t$ are naturally inherited from $\prec$, here (among other possible options) by giving to a~grandchild the label of its leftmost descendant.
Let us focus on the blue order. 
Its successor relation defines the following ordering: 3 (leftmost descendant of the orange node), 9 (same with the purple node), 14, 15 (same with the yellow node), 16, 17, 18.
As the left-to-right order $<$ on the leaves of $T$ also naturally defines the left-to-right order $<_t$ on the grandchildren of $t$, every node $t$ (with grandchildren) naturally defines the permutation $(<_t,\prec_t)$.

Finally a~\emph{delayed substitution from a~class $\Cc$ of permutations} is such a~decomposition when every $(<_t,\prec_t)$ belongs to $\Cc$.
We show (see~\cref{lem:delayed-decomp}) that any permutation obtained by delayed substitution from $\Cc$ is the product of three permutations in $\Cc$,
as long as the class~$\Cc$ is closed under some simple operations: substitution, taking patterns, and inverse. 
We then zoom in on the permutations $(<_t,\prec_t)$ and decompose them further.

\paragraph*{Decomposing permutations of height two.}
What we gain here is that the permutations $(<_t,\prec_t)$ have a~structured tree of height~two.
Interestingly the children of $t$ naturally define a~partition $\Pc$ over the universe of $(<_t,\prec_t)$, that is, over the grandchildren of $t$.
Each part of $\Pc$ is by construction an interval along $<_t$.
We look at how these parts behave and interlace w.r.t.~$\prec_t$.

Two parts $P_1,P_2 \in \Pc$ are said \emph{mixed} if there are $x_1, y_1 \in P_1$ and $x_2, y_2 \in P_2$ such that $x_1 \prec_t x_2 \prec_t y_1 \prec_t y_2$ or $x_2 \prec_t x_1 \prec_t y_2 \prec_t y_1$.
This definition exactly matches the mixed cells in the definition of (almost) mixed minors.
This brings us to a~common theme in this context: colouring the conflict graph (red graph, or graph of the \emph{mixed} relation) with a~bounded number of colours to separately deal with large chunks without conflicting pairs.
This is usually performed directly on a~partition from a~sequence witnessing low twin-width (see for instance \cite{twin-width3,tww-approx}), where the conflict graph of the partition has bounded maximum degree.
In our case, like in~\cite{tww-poly-chi-bounded}, the subtlety is that $\Pc$ does not come from a~partition sequence, and its \emph{mixed} graph may have large maximum degree.

\paragraph*{Getting rid of the mixedness.}
Nevertheless, the mixed graph has no $K_k$ (clique on $k$ vertices) nor $K_{k,k}$ (biclique, with $k$ vertices being fully adjacent to $k$ other vertices) subgraphs, as they would entail the presence of a~$k$-almost mixed minor. 
We can split the edge set of the \emph{mixed} graph into a~bounded-degeneracy graph and an \emph{overlap graph} (also known as \emph{circle graph}); the latter is degenerate as a~consequence of excluding large clique and biclique subgraphs. 
Therefore the \emph{mixed} graph is itself $h(k)$-degenerate, for some polynomial function $h$, hence properly colourable with $h(k)+1$ colours; see~\cref{lem:mixed-degenerate}.

Once again we can zoom in on a~more structured subset of our permutation, where no pairs of parts are mixed.
This is because the so-called \emph{shuffles}---permutation operations that can recreate $(<_t,\prec_t)$ from the $h(k)+1$ ``non-mixed'' permutations---can be expressed as bounded products of separable permutations; see the more precise formulation of~\cref{lem:shuffle-decomp}.
We now want to decompose a~permutation $(<',\prec')$ in the same setting as $(<_t,\prec_t)$ but with the additional assumption that its partition $\Pc$ has no pair of mixed parts.

\paragraph*{Endgame---the induction trick on the almost mixed minor.}
Our last goal is to show that for each $P \in \Pc$ the permutation restricted to the elements of~$P$ (at the level of the grandchildren of~$t$) as well as the permutation over the parts of~$\Pc$ (at the level of the children of~$t$) are simpler than the initial permutation $\sigma$.
As announced, this takes the form of proving that these permutations have no $(k-1)$-almost mixed minor.
Once this is established, we are finally done by invoking~\cref{lem:non-mixed-decomp}, which decomposes $(<',\prec')$ into a~bounded product involving the above-mentioned permutations and separable permutations.

\begin{figure}[h!]
\centering
\begin{tikzpicture}[node/.style={fill,circle,inner sep=0.06cm},
  node2/.style={circle,inner sep=0.05cm},
 arc/.style={very thick,-{>[length=2mm, width=2mm]}}]

\def\hs{0.75} 
 \def\s{16}

\node[node] (t) at (\s * \hs / 2, 2) {} ;
\node at (\s * \hs / 2, 2.4) {$v$} ;

 \foreach \i in {1,...,\s}{
 \node[node] (v\i) at (\i * \hs,0) {} ;
 \node[node] (x\i) at (\i * \hs,-2) {} ;
 \draw (x\i) -- (v\i) -- (t) ;
 }
 \foreach \i/\j in {1/{x_1},2/{x'_1}, 5/{x_2},6/{x'_2}, 8/{x_3},9/{x'_3}, 12/{x_4},13/{x'_4}, 15/{x_5},16/{x'_5}}{
   \node at (\i * \hs, -2.4) {$\j$} ;
}
\node at (\hs-0.25,-0.25) {$v_1$} ;
\node at (2 * \hs-0.25,-0.25) {$v'_1$} ;

\begin{scope}[yshift=-0.3cm]
\def\s{0.25}
\def\h{0.1}
\foreach \p/\i/\j/\c in {0/-4.23/-0.5/red,1/1/4/black,2/5/7/black,3/8/11/black,4/12/14/black,5/15/16/black}{
   \draw[thick,\c] (\i * \hs -\s,-2.5) -- (\j * \hs +\s,-2.5) ;
   \draw[thick,\c] (\i * \hs -\s,-2.5-\h) -- (\i * \hs -\s,-2.5+\h) ;
   \draw[thick,\c] (\j * \hs +\s,-2.5-\h) -- (\j * \hs +\s,-2.5+\h) ;
   \node at (\i * \hs * 0.5 + \j * \hs * 0.5, -2.8) {\textcolor{\c}{$R_\p$}} ;
}
\end{scope}

\foreach \i/\j in {1/-3, 2/-1.2, 3/-2.1, 4/-0.23, 5/-3.6}{
\node[node] (y\i) at (\j * \hs,-2) {} ;
\node at (\j * \hs,-2.4) {$y_\i$} ;
}
\node[node] (m) at (-4.5 * \hs,-2) {} ;
\node at (-4.5 * \hs,-2.4) {$\first$} ;

\foreach \i/\j\b in {x1/y1/-20,y1/x2/45, x6/y2/-40,y2/x5/15}{
\draw[arc,gray] (\i) to [bend left=\b] (\j) ;
}
\end{tikzpicture}
\caption{Illustration of the induction trick. 
By construction of the children of $v$, $v_1$ and $v'_1$ have each a~descendant (or simply child) $x_1$ and $x'_1$, respectively, such that there is a~$y_1$ outside the descendants of $v$ in between $x_1, x'_1$ in the order $\prec'$.
Assume $R_1, \ldots, R_5$ is a~5-almost mixed minor on a~choice of one descendant per child of $v$.
For the sake of clarity, let us make the simplifying assumptions that the minor is symmetric ($C_i=R_i$ for every $i$), every $y_i$ is to the left of $v$, and thus can be grouped together with $\first$, the minimum along $<'$, in an interval $R_0=C_0$ of $<'$, and $\first \prec' x$ for every $x$ descendant of~$v$.
It then holds that $R_0$ is mixed with each $R_i$ (for $i \in [5]$) since for instance $\first \prec' x_1 \prec' y_1 \prec' x'_1$ and $\first \prec' x'_2 \prec' y_2 \prec' x_2$.
}
\label{fig:induction-trick}
\end{figure}
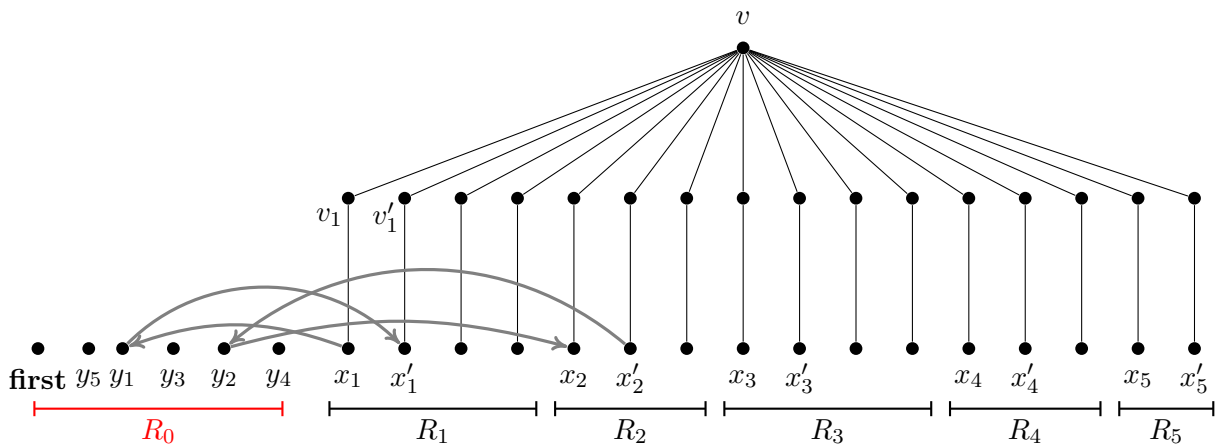

We are left with the task of building a~$k$-almost mixed minor of $\sigma$ (a~contradiction) from a~supposed $(k-1)$-almost mixed minor $\Dc$ over the children of some node $v$ of $T$.
Let us recall that ``starting a~new sibling'' in the construction of $T$ means that the last element of the current child and the first element of the new child are in two distinct intervals $I_a, I_b$.
Hence there is an element $y$ outside the children of $v$ such that $I_a \prec' y \prec' I_b$.
We get one such $y$ for every row part and every column part of $\Dc$.
By leveraging again the \emph{shuffle} trick, we can reduce to the case when all such vertices are to the left (by $<'$) of the children of $v$ (see~\cref{thm:main-induction}).
Then the minimum of~$<'$ together with these vertices $y$ can make a~$k$-th interval (in rows and columns) that is mixed with every previous part of $\Dc$, thereby yielding a~$k$-almost mixed minor (see~\cref{fig:induction-trick}).

\subsection{Perspectives}
We prove that for permutations, the minimum size of an avoided pattern,
and the minimum length of a~factorisation into separable permutations, are functionally equivalent.
However, the bounds we give are huge, the dependency being exponential in one direction, and doubly exponential in the other,
leaving open the question of finding tighter bounds.
While avoided patterns have been extensively studied,
there is to our best knowledge no work on the length of factorisations into separable permutations.
A natural question is whether this new parameter can be computed efficiently.

In a~different direction, a~natural generalisation of our result would be to extend this factorisation to matrices.
It was shown in~\cite{twin-width4} that a~class~$\cal M$ of 0,1-matrices closed under taking submatrices
has growth at~most single-exponential if and only if~$\cal M$ has bounded twin-width.
It is then natural to ask if every matrix of~$\cal M$ can be factorised into
a~bounded product (say, computed in $\mathbb F_2$) of some basic 0,1-matrices playing the role of separable permutations.


Finally, a~crucial point of this work is perhaps the used strategy:
To decompose a~pattern-free permutation $(<_1,<_2)$, first find a~third order $<_3$ compatible with both orders (i.e. such that $(<_1,<_3)$ and $(<_2,<_3)$ are $k$-mixed minor free),
and then express both $(<_1,<_3)$ and $(<_2,<_3)$ as delayed structured trees (see \cref{fig:delayed-dec}).
How delayed substitutions untangle the complexity of permutations and other binary structures
is a~novel and promising tool that, we believe, can be exploited in many other ways.

\subsection{Organisation of the paper}

In \cref{sec:prelim}, we introduce a~number of definitions, conventions, and simple results, which we use throughout this work.
Notably we recall the definitions of ordered trees, substitutions and shuffles of permutations, degeneracy and proper colourings in the context of sparse circle graphs, and relevant properties of classes with bounded twin-width.
Note that we will \emph{not} actually need the definition of twin-width and of transductions (they can both be found for instance in~\cite{twin-width1}).

\Cref{sec:delayed} focuses on delayed substitutions, detailing how to construct them, and how to decompose them into products and substitutions.
\Cref{sec:quotients} shows how to further decompose the permutations that appear in the delayed substitution, by studying their mixed graphs.
The core of the proof of \cref{thm:main} is in \cref{sec:decomposition},
using lemmas from the previous sections to decompose $k$-almost mixed free permutations into $(k-1)$-almost mixed free ones.
In \cref{sec:algo}, we explain how to implement \cref{thm:main} in linear time.
Finally, \cref{sec:graphs} shows how to apply \cref{thm:main} to graphs,
to prove \cref{thm:sparsetww-subdivision,thm:tww-FO-desc}.

\section{Preliminaries}\label{sec:prelim}
This section defines our notations and conventions,
and introduces some important constructions on permutations.
We denote by~$[n]$ the interval of integers~$\{1,\dots,n\}$.

\subsection{Ordered trees}

Trees are used throughout this work to describe decompositions of permutations.
We work with rooted trees. Vertices of the trees are \emph{nodes}.
The \emph{ancestors} of a~node $x$ are the nodes in the unique path from $x$ to the root $r$,
and the \emph{parent} of $x$ is the first node on this path.
We will also speak of \emph{descendants}, \emph{children},
\emph{grandchildren}, \emph{siblings} (nodes with same parent),
and \emph{cousins} (non-sibling nodes with the same grandparent).
The set of leaves of a~tree~$T$ is denoted by~$L(T)$.
For any node~$t \in T$, we denote by~$T(t)$ the subtree rooted at~$t$ (i.e. consisting of the descendants of~$t$),
and by~$L(t) \subseteq L(T)$ the set of leaves of~$T(t)$.

Here an \emph{ordered tree}~$(T,<)$ is a~rooted tree~$T$ equipped with a~linear order~$<$ on $L(T)$,
such that for each node~$t \in T$, the leaves~$L(t)$ form an interval of~$<$.
In that case we also say that~$<$ is \emph{compatible} with~$T$.
It is natural to think of~$<$ as a~\emph{left-to-right} order.

If~$(T,<)$ is an ordered tree and $x,y \in T$ are not in an ancestor--descendant relationship,
then $L(x),L(y)$ are disjoint, and each of them is an interval for~$<$,
hence either $L(x) < L(y)$, or $L(y) < L(x)$.
We then naturally extend~$<$ to~$x,y$ by~$x < y$ in the former case, and~$y < x$ in the latter.
In particular, $<$ induces a~linear order~$<_t$ on the children of any given internal node~$t$.
This can be reversed.
Given a~tree~$T$, and an order~$<_t$ on the children of~$t$ for each internal node~$t$,
one can define~$<$ compatible with~$T$ as follows:
for any leaves~$x,y$, we have $x < y$ iff $x' <_t y'$, where $t$ is the last (that is, closest) common ancestor of~$x,y$,
and~$x'$ and $y'$ are the children of~$t$ that are ancestors of~$x$ and $y$, respectively.

\subsection{Permutations and biorders}

Throughout the paper, we will represent permutations as \emph{biorders},
i.e.\ the superposition of two linear orders on the same set.
Precisely, a~permutation $\sigma \in \perm_n$ is represented by the biorder $B_\sigma = ([n], <, <_\sigma)$,
where~$<$ is the natural order on~$[n]$, and $x <_\sigma y$ iff $\sigma(x) < \sigma(y)$.
Conversely, from a~biorder $B = (X,<_1,<_2)$ on~$n$ elements,
we define the associated permutation $\sigma_B$ such that
if $x\in X$ is in $i$th position for $<_1$, then it is in position $\sigma_B(i)$ for $<_2$.
This is a~bijection between permutations, and biorders considered up to isomorphism.
In the proofs, this bijection will be left implicit, and we may for example say
that a~biorder belongs to some class of permutations.

The inverse of a~permutation is obtained by swapping the roles of the two orders:
\begin{equation}
    \sigma_{(X,<_1,<_2)}^{-1} = \sigma_{(X,<_2,<_1)}.
\end{equation}
Similarly, factoring~$(X,<_1,<_2)$ corresponds to choosing a~third order~$<_3$:
\begin{equation}
    \sigma_{(X,<_3,<_2)} \circ \sigma_{(X,<_1,<_3)} = \sigma_{(X,<_1,<_2)}.
\end{equation}

The notion of pattern has a~very natural definition in terms of biorders.
A~pattern of $(X,<_1,<_2)$ is any induced restriction $(Y,<_1,<_2)$ for some subset $Y \subseteq X$.

\subsubsection{Substitutions}

Let us now introduce \emph{substitutions} of permutations, which will be used throughout this work,
and are a~natural introduction to the more general delayed decompositions presented in \cref{sec:delayed}.
Substitutions are easily visualised through permutation matrices:
the substitution of~$\tau$ inside~$\sigma$ corresponds to
replacing one of the 1-entries in the matrix of~$\sigma$ by the matrix of~$\tau$.
In this work, we are interested in \emph{iterated substitutions}---%
i.e.\ starting from a~permutation~$\sigma$ in a~class~$\Cc$,
and iteratively substituting other permutations of~$\Cc$ inside it---%
and we will consistently use substitution to mean iterated substitutions.

A precise description of an (iterated) substitution can be given by a~tree as follows.
Let~$T$ be a~tree, and~$<,\prec$ two linear orders on~$L(T)$, both compatible with~$T$.
For every internal node~$t \in T$, they induce orders~$<_t,\prec_t$ on the children of~$t$.
Then the permutation~$(<,\prec)$ is obtained by substitution of all the~$\{(<_t,\prec_t)\}_{t \in T \setminus L(T)}$.
In this case, we also say that~$(<,\prec)$ is obtained by \emph{substitution along~$T$}.
Conversely, any iterated substitution process can be expressed through such a~tree.

For a~class~$\Cc$ of permutations, let~$\Cc^*$ denote the \emph{substitution closure} of~$\Cc$, that is,
$\sigma$ is in~$\Cc^*$ if it is obtained by substitution of permutations in~$\Cc$.
We say that~$\Cc$ is \emph{closed under substitution} if~$\Cc = \Cc^*$.
Note that~$\left(\Cc^*\right)^* = \Cc^*$, meaning that the substitution closure is indeed closed under substitution.
For example, the class~$\Sep$ of \emph{separable} permutations
is the substitution closure of the class~$\{12,21\}$ of permutations on two elements.
It happens that~$\Sep$ is precisely the class of permutations excluding the patterns~$3142$ and~$2413$~\cite{BoseBL98}.
This implies that the inverse of a~separable permutation is also a~separable permutation.

\begin{lemma}
    \label{lem:product-substitution}
    If~$\Cc,\Dc$ are substitution-closed classes of permutations,
    then so are~$\Cc \circ \Dc$ and~$\Cc^{-1}$.
\end{lemma}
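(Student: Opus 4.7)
My plan is to handle the two closure properties separately, in both cases exploiting the tree-based description of iterated substitutions together with the biorder expressions for inverse and composition recalled just before the lemma.

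For $\Cc^{-1}$, I would take $\sigma \in (\Cc^{-1})^*$ expressed as a substitution along an ordered tree $(T,<,\prec)$ whose local permutations $(<_t,\prec_t)$ all lie in $\Cc^{-1}$. Since inversion corresponds to swapping the two orders of a biorder, each $(\prec_t,<_t)$ lies in $\Cc$. Using the same tree with the two orders swapped shows that $\sigma^{-1} = (\prec,<)$ is obtained by substitution of the $(\prec_t,<_t)$ along $T$; as $\Cc^* = \Cc$ by hypothesis, we conclude $\sigma^{-1} \in \Cc$ and hence $\sigma \in \Cc^{-1}$.

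For $\Cc \circ \Dc$, I would start from $\alpha \in (\Cc \circ \Dc)^*$ realised along an ordered tree $(T,<,\prec)$, so that each local permutation factors as $(<_t,\prec_t) = \sigma_t \circ \tau_t$ with $\sigma_t \in \Cc$ and $\tau_t \in \Dc$. By the biorder composition identity, each such factorisation is witnessed by an intermediate order $<'_t$ on the children of $t$ satisfying $(<_t,<'_t) = \tau_t \in \Dc$ and $(<'_t,\prec_t) = \sigma_t \in \Cc$. The key step is to glue the local orders $\{<'_t\}_t$ into a single linear order $<'$ on $L(T)$ compatible with $T$; this is exactly the bijection between compatible orders on $L(T)$ and systems of child-orderings at internal nodes, recorded in the preliminaries. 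Once $<'$ is produced, $(T,<,<')$ realises $(<,<')$ as a substitution of permutations in $\Dc$, whence $(<,<') \in \Dc^* = \Dc$, and symmetrically $(<',\prec) \in \Cc^* = \Cc$. The biorder composition identity then yields $\alpha = (<',\prec) \circ (<,<') \in \Cc \circ \Dc$.

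The only step with non-trivial content is the gluing of the local intermediate orders into a coherent global $<'$, but this is an immediate consequence of the order--tree correspondence from the preliminaries, so no genuine obstacle arises; the remainder of the argument is a direct translation between iterated substitutions on ordered trees and biorder manipulations.
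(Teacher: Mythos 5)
Your proposal is correct and follows essentially the same route as the paper: for the inverse, swap the two orders on the same substitution tree; for the composition, introduce the intermediate local orders $<'_t$, glue them into a global order $<'$ compatible with $T$, and factor through $(<,<')$ and $(<',\prec)$. The only cosmetic difference is that the paper first proves the slightly more general identities $\left(\Cc^{-1}\right)^* = \left(\Cc^*\right)^{-1}$ and $(\Cc\circ\Dc)^* \subseteq \Cc^*\circ\Dc^*$ and then specialises, whereas you argue the substitution-closed case directly.
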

\begin{proof}
    Let us prove more generally that for any classes~$\Cc,\Dc$, the following holds:
    \begin{equation}
        \label{eq:subst-product}
        \left(\Cc^{-1}\right)^* = \left(\Cc^*\right)^{-1} 
        \qquad \text{and} \qquad
        (\Cc \circ \Dc)^* \subseteq \Cc^* \circ \Dc^*.
    \end{equation}
    Let~$\sigma \in \left(\Cc^{-1}\right)^*$.
    Then,~$\sigma$ is obtained by substitution of~$\{(<_t, \prec_t)\}_{t \in T \setminus L(T)}$ along a~tree~$T$, 
    and for every internal node $t$ the permutation~$(<_t, \prec_t)$ is in~$\Cc^{-1}$.
    Let~$<, \prec$ be the corresponding linear orders on~$L(T)$.
    Then,~$\sigma = (<, \prec)$, hence~$\sigma^{-1} = (\prec, <)$.
    However,~$(\prec, <)$ is obtained by substitution of $\{(\prec_t, <_t)\}_{t \in T \setminus L(T)}$ along~$T$, 
    and for every internal node~$t$, $(\prec_t, <_t) = (<_t, \prec_t)^{-1}$,
    so~$(\prec_t, <_t) \in \Cc$.
    Thus $\sigma^{-1} \in \Cc^*$, and~$\sigma \in (\Cc^*)^{-1}$.
    This proves that~$\left(\Cc^{-1}\right)^* \subseteq \left(\Cc^*\right)^{-1}$.
    Applying this result to~$\Cc^{-1}$, we get~$\Cc^* \subseteq \left(\left(\Cc^{-1}\right)^*\right)^{-1}$.
    This immediately yields~$\left(\Cc^*\right)^{-1} \subseteq \left(\Cc^{-1}\right)^*$.

    Let~$\sigma \in (\Cc \circ \Dc)^*$.
    Then,~$\sigma$ is obtained by substitution of~$\{(<_t, \prec_t)\}_{t \in T \setminus L(T)}$ along a~tree~$T$, 
    and for every internal node $t$ the permutation~$(<_t, \prec_t)$ is in~$\Cc \circ \Dc$.
    Thus, for every such~$t$, there exists a~linear order~$<'_t$ on the children of~$t$
    such that~$(<_t, <'_t) \in \Dc$ and~$(<'_t, \prec_t) \in \Cc$.
    Let~$<, <'$ and~$\prec$ be the corresponding linear orders on~$L(T)$.
    Then,~$\sigma = (<, \prec) = (<', \prec) \circ (<, <')$.
    However,~$(<', \prec)$ is obtained by substitution of
    $\{(<'_t, \prec_t)\}_{t \in T \setminus L(T)}$ along~$T$,
    so~$(<', \prec) \in \Cc^*$.
    Similarly, $(<, <') \in \Dc^*$, thus $\sigma \in \Cc^* \circ \Dc^*$.
    
    When~$\Cc,\Dc$ are closed under substitution,
    the inclusions \eqref{eq:subst-product} simplify to $\left(\Cc^{-1}\right)^* = \Cc^{-1}$
    and $(\Cc \circ \Dc)^* \subseteq \Cc \circ \Dc$, proving the result.
\end{proof}

\subsubsection{Shuffles}
A~permutation~$\sigma \in \perm_n$ is a~\emph{$k$-shuffle}
if there is some partition $\biguplus_{i=1}^k X_i$ of its domain $[n]$ such that
the restriction (in the sense of patterns) of~$\sigma$ to any~$X_i$ is the identity permutation.
More generally, given a~class~$\Cc$ of permutations,
$\sigma$ is a~\emph{$k$-shuffle of~$\Cc$} if there is a~partition $\biguplus_{i=1}^k X_i = [n]$
such that the restriction of~$\sigma$ to any~$X_i$ is in~$\Cc$.

Let us say that a~class~$\Cc$ of permutations is \emph{closed under symmetry}
if it is closed by conjugating with the decreasing permutation:
if~$\sigma \in \Cc$, then the permutation~$i \mapsto n+1 - \sigma(n+1 - i)$ should also be in~$\Cc$.
In terms of biorders, this corresponds to replacing each of the two linear orders by its dual, i.e.\ its reverse order.
Remark that if~$\Cc, \Dc$ are closed under symmetry, then so is~$\Cc \circ \Dc$.
For example, the class $\Sep$ of separable permutations is closed under symmetry.
\begin{lemma}
    \label{lem:shuffle-decomp}
    For a~class~$\Cc$ of permutations closed under substitution and symmetry,
    any $2^k$-shuffle of~$\Cc$ is in $\Sep^k \circ \Cc \circ \Sep^k$.
    In particular, any $2^k$-shuffle is in~$\Sep^{2k}$.
\end{lemma}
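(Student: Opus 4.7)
The plan is to prove the lemma by induction on $k$. The case $k = 0$ is immediate: a $1$-shuffle of $\Cc$ is by definition a permutation in $\Cc$, hence it lies in $\Sep^0 \circ \Cc \circ \Sep^0 = \Cc$.

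The main step is a sub-lemma corresponding to the case $k = 1$, asserting that any $2$-shuffle of a class $\Dc$ closed under substitution and symmetry lies in $\Sep \circ \Dc \circ \Sep$. Granting this sub-lemma, the induction proceeds as follows. Given a $2^k$-shuffle $\sigma$ of $\Cc$ with parts $X_1, \ldots, X_{2^k}$, group them into two halves $Y = X_1 \cup \cdots \cup X_{2^{k-1}}$ and $Z = X_{2^{k-1}+1} \cup \cdots \cup X_{2^k}$. Each restriction $\sigma|_Y$ and $\sigma|_Z$ is a $2^{k-1}$-shuffle of $\Cc$, so by the induction hypothesis both lie in the class $\Cc' = \Sep^{k-1} \circ \Cc \circ \Sep^{k-1}$. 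By \cref{lem:product-substitution} and the substitution-closure of $\Sep$, the class $\Cc'$ is closed under substitution; it is also closed under symmetry, since symmetry distributes over composition and both $\Sep$ and $\Cc$ are symmetry-closed. The permutation $\sigma$ is then a $2$-shuffle of $\Cc'$, and the sub-lemma applied to $\Cc'$ yields $\sigma \in \Sep \circ \Cc' \circ \Sep = \Sep^k \circ \Cc \circ \Sep^k$.

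The crux is therefore the sub-lemma itself. The plan is to encode a $2$-shuffle via intermediate linear orders on $[n]$. Writing $\sigma = ([n], <_1, <_2)$ with partition $X_1 \uplus X_2$ and restrictions $\tau_i \in \Dc$, we introduce orders $<_a$ and $<_b$ on $[n]$ both placing $X_1$ before $X_2$, with internal orderings inherited respectively from $<_1$ and $<_2$. The middle piece $(<_a, <_b)$ is then the direct sum $\tau_1 \oplus \tau_2$, which lies in $\Dc$ as a substitution of $\tau_1, \tau_2 \in \Dc$ into the permutation $12$ (present in $\Dc$ via its closure properties in the applications of interest). The main obstacle is that the outer block-sort permutations $(<_1, <_a)$ and $(<_b, <_2)$, although identity on each $X_i$, need not be separable in a single step; overcoming this requires a more subtle construction that exploits the symmetry closure of $\Dc$ to swap direct sums with skew sums (i.e.\ substitution into $21$ in place of $12$) according to the interleaving of $X_1, X_2$, so that both flanking permutations end up in $\Sep$.

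Finally, the ``in particular'' statement is obtained by specializing the main result to $\Cc = \Sep$ (the smallest class closed under substitution and symmetry that contains the identity); this produces the bound $\Sep^k \circ \Sep \circ \Sep^k = \Sep^{2k+1}$, and a final absorption of the middle separable factor (or an equivalent parallel direct induction specifically tailored to shuffles of identities, where the middle $\Dc$-piece is itself trivially separable) tightens the bound to the claimed $\Sep^{2k}$.
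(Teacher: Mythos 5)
Your overall plan matches the paper's: induction on $k$, splitting the shuffle into two halves, and introducing intermediate orders that place $Y$ before $Z$ in both. You also correctly identify the central obstacle that your sketch does not resolve: if the intermediate order $<_a$ inherits $<_1$ \emph{on both blocks}, the outer factor $(<_1,<_a)$ is a ``merge'' permutation that is generally not separable. For instance, take $<_1 = 1,2,3,4,5,6$ with $X_1 = \{1,3,5\}$, $X_2 = \{2,4,6\}$. Then $(<_1,<_a) = 142536$, which contains $3142$ at positions $2,3,4,5$ and is therefore not separable.

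The concrete trick that closes this gap, and which you gesture at but never state, is to \emph{reverse} the internal order on one of the two blocks. Define $\prec_1$ by: $Y \prec_1 Z$; inside $Y$, $\prec_1$ coincides with $<_1$; inside $Z$, $\prec_1$ is the \emph{dual} of $<_1$. Then the $<_1$-minimum of $X$ is the $\prec_1$-minimum if it lies in $Y$ and the $\prec_1$-maximum if it lies in $Z$; in either case it can be split off, and iterating yields a caterpillar separating tree, so $(X,<_1,\prec_1)$ is separable. This is exactly what makes the alternation of direct/skew sums you mention ``line up'': the caterpillar uses $12$ or $21$ at each step depending on which block the current extremum lies in. The price is that the middle factor $(\prec_1,\prec_2)$ is \emph{not} $\tau_1 \oplus \tau_2$ but $\tau_1 \oplus \bar\tau_2$ where $\bar\tau_2$ is the symmetric of $\tau_2$---and this is precisely where the hypothesis that $\Cc$ is symmetry-closed is used, not to ``swap sum types adaptively,'' but to absorb the reversal of one half. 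Without committing to this specific construction, your sub-lemma remains unproved; the phrase ``requires a more subtle construction'' is a placeholder for the missing idea, not a proof.

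A smaller point: for the ``in particular'' statement, specializing to $\Cc = \Sep$ gives $\Sep^{2k+1}$ and then requires an awkward absorption argument. The cleaner specialization, which you mention parenthetically and should promote to the main argument, is $\Cc = \{\text{identity permutations}\}$: this class is closed under substitution and symmetry, the middle factor vanishes, and you get $\Sep^{2k}$ directly.
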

\begin{proof}
    Let~$\sigma = (X,<_1,<_2)$ be a~$2^k$-shuffle of~$\Cc$.
    Then there is a~partition of its domain $X = Y \uplus Z$ such that
    the restrictions of~$\sigma$ to both~$Y$ and~$Z$ are~$2^{k-1}$-shuffles of~$\Cc$.
    
    Define an intermediate linear order~$\prec_1$ by
    \begin{compactitem}
        \item $Y \prec_1 Z$,
        \item inside~$Y$, $\prec_1$ coincides with~$<_1$, and
        \item inside~$Z$, $\prec_1$ coincides with the dual of~$<_1$.
    \end{compactitem}
    We claim that~$(X,<_1,\prec_1)$, is separable.
    Indeed, let~$x$ be the minimum of~$X$ for~$<_1$.
    If \mbox{$x \in Y$}, then~$x$ is the minimum of~$\prec_1$,
    while if~$x \in Z$ it is the maximum of~$\prec_1$.
    Either way, we can separate~$X$ into~$\{x\}$ and~$X \setminus \{x\}$,
    and proceed by induction on the latter.
    This scheme is a~decomposition of~$(X,<_1,\prec_1)$ as a~separable permutation,
    with the specificity that the separating tree is a~caterpillar.
    We define~$\prec_2$ similarly with regards to~$<_2$, so that~$(X,\prec_2,<_2)$ is separable.

    Consider now the permutation~$(X,\prec_1,\prec_2)$.
    Since we have~$Y \prec_i Z$ for both~$i=1,2$,
    it suffices to consider this permutation restricted to either~$Y$ or~$Z$.
    By construction, the restriction to~$Y$ is exactly~$(Y,<_1,<_2)$,
    and the restriction to~$Z$ is~$(Z,<_1,<_2)$ up to symmetry.
    These two permutations are $2^{k-1}$-shuffles of~$\Cc$,
    hence by induction are in $\Sep^{k-1} \circ \Cc \circ \Sep^{k-1}$.
    Since this class is closed under symmetry and substitution,
    we obtain that~$(X,\prec_1,\prec_2)$ is also in $\Sep^{k-1} \circ \Cc \circ \Sep^{k-1}$,
    and conclude by composing with~$(X,<_1,\prec_1)$ and~$(X,\prec_2,<_2)$.

    Finally, the remark that $2^k$-shuffles are in~$\Sep^{2k}$
    is obtained by applying the result with~$\Cc$ being the class of identity permutations.
\end{proof}

\subsection{Twin-width}\label{sec:twinwidth}
Guillemot and Marx~\cite{guillemot14patterns} introduced a~width of permutations, that was later generalised to graphs, matrices, and binary structures by Bonnet et al.\ in~\cite{twin-width1} under the name of twin-width.

The twin-width of a~graph~$G = (V,E)$ is defined through \emph{partition sequences} (equivalent to so-called \emph{contraction sequences}),
that is, sequences~$\Pc_n,\dots,\Pc_1$ of partitions of~$V$ such that
\begin{compactenum}
\item $\Pc_n = \{\{x\}~:~x \in V\}$ is the partition into singletons,
\item $\Pc_1 = \{V\}$ is the trivial partition, and 
\item $\Pc_i$ is obtained from~$\Pc_{i+1}$ by merging two parts.
\end{compactenum}
Two parts~$X \neq Y$ in~$\Pc_i$ are \emph{homogeneous} if there are
either no edges or all possible edges between~$X$ and~$Y$ in~$G$.
If~$X,Y$ are not homogeneous (i.e.\ there is at least one edge and one non-edge between them),
they are said to be \emph{in error}.
The \emph{width} of the partition sequence~$\Pc_n,\dots,\Pc_1$
is the maximum over~$i \in [n]$ and~$X \in \Pc_i$ of the number of parts of~$\Pc_i$ in error with~$X$%
---called the \emph{error degree of~$X$}.
Finally, the twin-width of~$G$ is the minimum width of a~partition sequence.
This definition readily generalises to \emph{binary relational structures}~$(V,R_1,\dots,R_k)$,
consisting of a~domain, or vertex set~$V$, and a~number of binary relations~$R_i \subseteq V^2$.
Remark in particular that biorders are binary relational structures.
In this setting, parts~$X,Y \in \Pc_i$ are \emph{in error w.r.t.\ the relation~$R_j$}
if there are~$x_1,x_2 \in X$, $y_1,y_2 \in Y$
such that~$(x_1,y_1) \in R_j$ and~$(x_2,y_2) \not\in R_j$,
or symmetrically~$(y_1,x_1) \in R_j$ and~$(y_2,x_2) \not\in R_j$.
The width of the partition sequence~$\Pc_n,\dots,\Pc_1$
is now the maximum over all~$j \in [k]$, $i \in [n]$, and~$X \in \Pc_i$
of the error degree of~$X$ in~$\Pc_i$ w.r.t.~$R_j$.

As fundamental property, twin-width exhibits a~duality with grid-like structures in matrices---or universal patterns of permutations.
This result, based on the Marcus--Tardos theorem~\cite{marcustardos},
appears in the seminal work of Guillemot and Marx, and can be restated as follows.
\begin{theorem}[{\cite[Theorem~4.1]{guillemot14patterns}}]
    \label{thm:pattern-tww}
    For any permutation~$\pi$, there is~$c_\pi = 2^{O(|\pi|)}$ such that
    if~$\sigma$ avoids~$\pi$ as pattern,
    then the biorder~$B_\sigma$ associated with~$\sigma$ has twin-width at most~$c_\pi$.
\end{theorem}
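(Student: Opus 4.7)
The plan is to produce an explicit partition sequence of $B_\sigma$ whose width can be controlled via the Marcus--Tardos theorem, following the spirit of the original Guillemot--Marx argument. Assuming for simplicity that $n = 2^d$, let $\mathcal{Q}_i$ be the dyadic partition of $[n]$ into $2^i$ consecutive intervals of length $2^{d-i}$ with respect to the natural order $<$, and interpolate between consecutive $\mathcal{Q}_i$'s by pairwise merges to obtain a full partition sequence $\mathcal{P}_n,\dots,\mathcal{P}_1$. Since every part is a contiguous interval for $<$, any two parts are automatically homogeneous with respect to $<$, so that relation contributes at most a trivial constant to the error degree.

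All the work therefore reduces to bounding the error degree for the second relation $<_\sigma$. Two interval parts $X,Y$ of some $\mathcal{Q}_i$ are in error w.r.t.\ $<_\sigma$ precisely when their images $\sigma(X),\sigma(Y)$ are \emph{mixed}: neither lies entirely below the other along $<_\sigma$. I would show that if some part $X$ is in error with more than $c_\pi$ other parts, then $A(\sigma)$ contains a $k$-mixed minor for $k = 2^{\Theta(|\pi|)}$, contradicting the hypothesis. To build this minor, view the columns of $A(\sigma)$ restricted to the union of the parts in error with $X$ and the rows indexed by a finer dyadic subdivision of $X$; by a Marcus--Tardos-style iterative refinement one halves both the row blocks and the column blocks while retaining, in every surviving cell, at least two distinct row and column patterns. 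After $O(\log k)$ rounds one obtains a $k \times k$ dyadic grid in which every cell is mixed, i.e.\ exactly a $k$-mixed minor of $B_\sigma$.

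The conclusion then uses the key combinatorial lemma of Guillemot--Marx: by a greedy induction picking one pattern element from each diagonal cell of a mixed minor, a $k$-mixed minor of a permutation matrix contains every pattern of length at most $\lfloor \log_2 k \rfloor$. Choosing $k = 2^{|\pi|}$ thus forces the pattern $\pi$, yielding the bound $c_\pi = 2^{O(|\pi|)}$ for the error degree, hence for the twin-width of $B_\sigma$. The generalisation to binary structures (where we keep track of two order relations rather than a single one) is the contribution of~\cite{twin-width1} and causes no additional blow-up.

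The main obstacle is the second step: guaranteeing that the iterative refinement preserves the mixedness of every cell, so that the output is an honest $k$-mixed minor rather than an unstructured grid of ones. This requires a pigeonhole inductive choice---at each halving step we keep the half-cell in which enough of the mixing structure survives---and is exactly where the exponential dependence on $|\pi|$ enters the bound; shaving this to polynomial would require substantially more than the Marcus--Tardos black box.
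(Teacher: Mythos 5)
The paper does not prove this statement: it is Theorem~4.1 of Guillemot and Marx~\cite{guillemot14patterns}, cited as a black box (the paper only remarks that the~$2^{O(|\pi|)}$ form of the bound uses Fox's improvement of the Marcus--Tardos constant). So there is no in-paper proof to compare against, and I evaluate your reconstruction on its own terms.

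Your plan has a fatal flaw at the very first step. A dyadic partition sequence that refines the position order~$<$ does \emph{not} have bounded error degree for pattern-avoiding~$\sigma$. Consider the odds-before-evens permutation on~$[n]$, i.e.~$\sigma = 1,\,n/2{+}1,\,2,\,n/2{+}2,\dots$, which avoids~$321$. Its dyadic level-$(d{-}1)$ parts are~$\{1,2\},\{3,4\},\dots$, with~$\sigma$-images~$\{1,n/2{+}1\},\{2,n/2{+}2\},\dots$; every pair of parts interleaves under~$<_\sigma$, so every part is in error with all~$n/2-1$ others. The paper itself flags this phenomenon in \cref{sec:decomposition} (``permutations avoiding a~fixed pattern may contain arbitrarily large mixed minors''; its example is exactly this one). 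So no fixed merge tree that is compatible with~$<$ can work: a partition sequence witnessing low twin-width for~$B_\sigma$ must in general use parts that are not~$<$-intervals. This is precisely the content of \cref{lem:tww-mixed-minor}, which supplies a \emph{third} linear order along which the adjacency matrix becomes mixed-free; the Guillemot--Marx construction builds the merges adaptively (the order of merges is data-dependent), it does not run down a prescribed dyadic tree of positions. Without that, the claim ``the relation~$<$ contributes at most a trivial constant'' buys you nothing, because you are committing to a partition sequence that is simply not the right one.

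A second, independent error: the stated ``key combinatorial lemma'' is wrong. The same odds-before-evens example has~$(n/2)$-mixed minors in~$M_\sigma$ while avoiding a pattern of size~$3$, so it is false that a~$k$-mixed minor of~$M_\sigma$ forces all patterns of length~$\lfloor\log_2 k\rfloor$. The statement you want concerns~$k$-grids in the \emph{sparse} permutation matrix~$A(\sigma)$: a~$k$-grid yields every pattern of size~$k$ (pick one $1$-entry from cell~$(i,\pi(i))$ for each~$i$), with no logarithm, and the exponential dependence~$c_\pi = 2^{O(|\pi|)}$ enters through Marcus--Tardos (sharpened by Fox) when one bounds the error degrees, not through a grid-to-pattern step. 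Your write-up also slides between~$A(\sigma)$ and the dense biorder matrix~$M_\sigma$, and between grids and mixed minors, which are not interchangeable. The step ``high error degree~$\Rightarrow$ large mixed minor'' that you identify as ``the main obstacle'' is indeed where all of the work sits, and it is not salvageable from a fixed dyadic partition.
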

The bound~$c_\pi = 2^{O(|\pi|)}$ in the restated theorem
follows from a~later improvement in the Marcus--Tardos constant due to Fox~\cite{fox2013stanleywilf}.
A \emph{division}~$\Dc$ of a~matrix consists of partitions~$\Rc,\Cc$
of its rows and columns respectively into \emph{intervals}.
It is a~\emph{$k$-division} if the partitions have~$k$ parts each.
A \emph{cell} of the division is the submatrix induced by~$X \cap Y$ for some~$X \in \Rc, Y \in \Cc$.
A $k$-\emph{grid} in a~0,1-matrix is a~$k$-division in which every cell contains a~1-entry.
Remark that the order of rows and columns in crucial in these definitions
because divisions are required to be partitions into intervals.

The notion of grid is tightly linked to permutation patterns, and to \cref{thm:pattern-tww}.
In order to generalise the latter to dense matrices, Bonnet et al.~\cite{twin-width1} introduced \emph{mixed minors}.
Say that a~matrix is \emph{horizontal} if all rows are constant,
i.e.\ in each row, all the coefficients are the same (or equivalently, all columns of the matrix are equal).
Symmetrically, it is \emph{vertical} if all columns are constant,
and finally it is \emph{mixed} if it is neither horizontal nor vertical.
Then, a~$k$-\emph{mixed minor} in a~matrix is a~$k$-division in which every cell is mixed.
A matrix is said \emph{$k$-mixed free} if it does not have a~$k$-mixed minor.
The generalisation of \cref{thm:pattern-tww} can then be stated as follows~\cite[Theorem~5.8]{twin-width1}:
There is a~function $f$ such that if a~graph~$G$ admits a~$k$-mixed free adjacency matrix, then~$G$ has twin-width at most~$f(k)$.

We will not need this result, but only its much simpler-to-prove converse.
\begin{lemma}[{\cite[first part of Theorem~5.4]{twin-width1}}]
  \label{lem:tww-mixed-minor}
  If~$(V,R_1,\dots,R_k)$ is a~binary relational structure with twin-width~$t$,
  then there exists a~linear order~$<$ on~$V$ such that
  the adjacency matrix of each relation~$R_i$, ordered by~$<$, is $(2t+2)$-mixed free.
\end{lemma}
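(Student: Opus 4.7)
The plan is to extract the order~$<$ from the contraction tree of a partition sequence of width~$t$, and then argue by contradiction that any $(2t+2)$-mixed minor would produce a part of error degree exceeding~$t$.

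First, fix a partition sequence $\Pc_n,\dots,\Pc_1$ of $(V,R_1,\dots,R_k)$ of width at most~$t$. This sequence is naturally encoded by a rooted binary tree~$T$ on~$V$: the leaves are the elements of~$V$, each internal node corresponds to a single merge step, and its two children are the parts that were merged. Choosing an arbitrary order on the children of every internal node yields a linear order~$<$ on $V = L(T)$ compatible with~$T$; in particular, every part of every~$\Pc_i$ is an interval of~$<$.

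Suppose for contradiction that, for some relation~$R_j$, its adjacency matrix ordered by~$<$ contains a $(2t+2)$-mixed minor with row intervals $I_1,\dots,I_{2t+2}$ and column intervals $J_1,\dots,J_{2t+2}$. Pick an internal node $u \in T$ that is minimal with the property that $L(u)$ intersects at least two row intervals or at least two column intervals; equivalently, the sets $L(u_1), L(u_2)$ each fit inside a single cell $I_a \cap J_b$ of the grid (where $u_1,u_2$ are the two children of~$u$), while~$L(u)$ does not. Such a node exists since the root's subtree is all of~$V$ and leaves are singletons. Because $L(u) = L(u_1) \cup L(u_2)$ is an interval of~$<$ that covers multiple intervals, a short interval-covering argument shows that the row intervals (or column intervals) containing $L(u_1)$ and~$L(u_2)$ must be adjacent; WLOG this happens in the row direction, with $L(u_1) \subseteq I_a$ and $L(u_2) \subseteq I_{a+1}$ for some~$a$.

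Let~$X = L(u)$, $Y = L(u_1)$, $Z = L(u_2)$, and let~$i^*$ be the index with $Y,Z \in \Pc_{i^*+1}$ and $X \in \Pc_{i^*}$. The final step is to show that~$X$ has error degree at least $t+1$ (in fact $2t+2$) with respect to~$R_j$ in~$\Pc_{i^*}$, contradicting the width bound~$t$. The intended argument is that, for each column interval~$J_c$, the mixedness of the two cells $I_a \cap J_c$ and $I_{a+1} \cap J_c$, combined with the fact that~$X$ now amalgamates rows from both~$I_a$ and~$I_{a+1}$, forces~$X$ to be in error with at least one part of~$\Pc_{i^*}$ lying (as columns) inside~$J_c$; summing over all $2t+2$ column intervals then gives the required lower bound on the error degree. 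The main obstacle is precisely this last counting step: the specific parts~$Y$ and~$Z$ need not individually witness the full row-diversity of~$I_a$ and~$I_{a+1}$, so one cannot extract the error from~$Y$ or~$Z$ alone and must use the \emph{global} mixedness of each cell together with the new row-spanning achieved by~$X$. I expect the formal argument to proceed by a case analysis on where the witnessing rows and columns of each mixed cell sit relative to $Y \cup Z$ and the other parts of~$\Pc_{i^*+1}$, propagating a non-constancy to an appropriate part of every column interval.
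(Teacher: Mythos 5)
The paper does not prove this lemma: it cites it directly from~\cite{twin-width1} as (the first part of) Theorem~5.4 there, so there is no proof of the paper to compare yours against. Judging the proposal on its own merits: the setup (take $<$ compatible with the contraction tree, argue by contradiction via a part with error degree exceeding~$t$) is the right general shape, but the final counting step is a genuine gap, as you suspect, and it cannot be closed with the particular choice of critical node~$u$ you make.

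The problem is that choosing~$u$ to be tree-minimal with $L(u)$ crossing a boundary of the mixed minor makes $X = L(u) = Y \cup Z$ as \emph{small} as possible, while the mixedness of the cells is a \emph{global} property of the intervals $I_a, I_{a+1}, J_c$. In the extreme case $Y = \{y\}$ and $Z = \{z\}$ with $y$ the last element of $I_a$ and $z$ the first of $I_{a+1}$, the cell $I_a \cap J_c$ being mixed only guarantees two distinct row vectors \emph{somewhere} among the rows of $I_a$; the rows of $y$ and $z$ restricted to $J_c$ may perfectly well be equal and constant, in which case no part $Q \subseteq J_c$ puts $X = \{y,z\}$ in error with it. Thus $X$ can have error degree~$0$ in $\Pc_{i^*}$ w.r.t.\ $R_j$, and no case analysis on where the witnessing rows/columns of each mixed cell live can force a non-constancy down into a two-element part. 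The crux is that your argument needs to localise mixedness at~$X$, and the first-straddle moment gives you no such leverage.

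A correct proof must pick a different critical moment in the partition sequence---typically a partition $\Pc_i$ much later in the sequence, when on the order of~$t$ of the $2t+1$ row boundaries and of the $2t+1$ column boundaries of the mixed minor have been crossed simultaneously---and then amortise the error degree of a single straddling part against the many cells of the division that are still visible in $\Pc_i$. This ``wait until enough boundaries are crossed, then count'' step is precisely what is missing; without it, the tree-minimal choice of~$u$ leaves the mixedness of the grid entirely disconnected from the part you are trying to indict.
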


Pilipczuk and Sokołowski introduced the following subtle relaxation of mixed minors in~\cite{tww-quasi-chi-bounded}.
Consider a~$k$-division $\Dc = (\Rc,\Cc)$ of a~matrix~$M$,
and enumerate the parts as $\Rc = \{R_1,\dots,R_k\}$, $\Cc = \{C_1,\dots,C_k\}$,
following the order of rows and columns.
We say that~$\Dc$ is a~$k$-\emph{almost mixed minor} if every cell of~$\Dc$ is mixed,
except possibly the diagonal cells~$R_i \cap C_i$ for $i \in [k]$.
Clearly a~$k$-mixed minor is also a~$k$-almost mixed minor,
and it is not hard to check that a~$2k$-almost mixed minor yields a~$k$-mixed minor by merging the parts by pairs.
Thus these two notions are equivalent up to a~factor of two.
The point of almost mixed minors is to allow a~proof by induction:
Ideally, a~structure without $k$-almost mixed minors, i.e. \emph{$k$-almost mixed free}, is decomposed
into smaller parts that have no $(k-1)$-almost mixed minors.
The same scheme is much harder to achieve using (regular) mixed minors.

\subsection{Graphs, colouring, degeneracy}
A $k$-colouring of a~graph~$G$ is a~map~$c : V
(G) \to [k]$ that assigns distinct colours to adjacent vertices.
Equivalently, it is a~partition into~$k$ colour classes~$c^{-1}(1),\dots,c^{-1}(k)$,
each of which is an independent set (i.e.\ a~set of pairwise non-adjacent vertices).

A graph~$G$ is \emph{$k$-degenerate} if~$G$ and all its subgraphs have a~vertex of degree at most~$k$.
A~well-known characterisation is that~$G$ is $k$-degenerate
if and only if there is an acyclic orientation of its edges such that all vertices have out-degree at most~$k$.
Degenerate graphs can be efficiently coloured in the following sense.
\begin{lemma}[\cite{MatulaB83}]
    \label{lem:degen-colouring}
    Any $k$-degenerate graph~$G$ can be $(k+1)$-coloured by a~greedy algorithm.
    Further, if the graph is given through adjacency lists,
    then this colouring can be computed in time~$O(k \card{V(G)})=O(\card{E(G)})$.
\end{lemma}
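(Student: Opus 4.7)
The plan is to construct a \emph{degeneracy ordering} of $G$, and then greedily colour in the reverse of this order. Specifically, since $G$ is $k$-degenerate, it contains a~vertex $v_n$ of degree at most $k$. Removing $v_n$, the remaining graph $G \setminus \{v_n\}$ is also $k$-degenerate, so it contains a~vertex $v_{n-1}$ of degree at most $k$ in that subgraph; iterating gives an ordering $v_1, \dots, v_n$ such that each $v_i$ has at most $k$ neighbours among $\{v_1, \dots, v_{i-1}\}$. We then process the vertices in the order $v_1, v_2, \dots, v_n$, assigning to each $v_i$ the smallest colour in $[k+1]$ not already used by one of its previously coloured neighbours. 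Since $v_i$ has at most $k$ earlier neighbours, at least one of the $k+1$ colours is free, so the procedure yields a~proper $(k+1)$-colouring.

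For the running-time claim, the main thing to justify is that the degeneracy ordering can be computed in time $O(k\card{V(G)})$ when $G$ is given through adjacency lists. The idea is to maintain, for each vertex $v$, its current degree $d(v)$ in the remaining graph, together with a~bucket structure (an array of doubly linked lists indexed by degree) that groups vertices of the same current degree. At each step we pick any vertex $v$ with $d(v) \le k$ from the appropriate bucket, output it as the next element of the ordering (read backwards), remove it, and for each remaining neighbour $w$ decrement $d(w)$ and move $w$ to the correct bucket. Each edge is traversed a~constant number of times during the whole process; since a~$k$-degenerate graph has at most $k \card{V(G)}$ edges, the total cost is $O(k \card{V(G)})$. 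The subsequent greedy-colouring phase also runs in $O(k \card{V(G)})$: for each vertex $v_i$, collect the (at most $k$) colours of its earlier neighbours in a~small local table and pick the smallest missing value in $[k+1]$.

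The only mildly delicate point is efficient bookkeeping of the buckets so that extracting a~vertex of current degree $\le k$ and updating neighbours can be done in constant amortised time per edge; this is standard and is precisely the implementation given by Matula and Beck~\cite{MatulaB83}, so I would simply refer to that paper for the low-level details rather than rewrite them.
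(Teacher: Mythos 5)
The paper itself gives no proof of this lemma; it is stated as a citation to Matula and Beck, so there is no in-paper argument to compare against. Your proposal is a correct and standard reproduction of exactly the argument from that reference: compute a degeneracy ordering by repeatedly extracting a minimum-degree vertex with a bucket structure, then greedily colour in the reverse order, using the bound $\card{E(G)} \le k\card{V(G)}$ for $k$-degenerate graphs to get the $O(k\card{V(G)})$ running time. This is precisely what the citation is meant to stand for, so your proof is consistent with the paper's intent.
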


A closely related parameter is the \emph{maximum edge density} of~$G$, that is, the maximum, taken over all subgraphs~$H$ of~$G$, of~$\card{E(H)} / \card{V(H)}$.
It is easy to see that $k$-degenerate graphs have maximum edge density at most~$k$,
and conversely graphs with maximum edge density~$k$ are $2k$-degenerate.
If~$G$ admits an orientation (which may contain cycles) in which vertices have out-degree at most~$k$,
then its maximum edge density is at most~$k$.

\subsubsection{Circle graphs}
Let~$\Ic$ be a~family of intervals of some linear order.
Two intervals~$A,B$ \emph{overlap} if they intersect, but neither contains the other.
The \emph{overlap graph} of~$\Ic$ is the graph whose vertex set is~$\Ic$,
and where intervals are adjacent exactly when they overlap.
Overlap graphs are also known as circle graphs,
because they can be described by the intersections of a~set of chords of a~circle.

Let~$K_t$ denote the complete graph on~$t$ vertices,
and~$K_{t,t}$ the balanced complete bipartite graph on~$2t$ vertices.
We say that~$G$ is $H$-subgraph-free (resp.~$H$-free) to mean that~$G$ does not have a~subgraph (resp.~induced subgraph) isomorphic to~$H$.
The following lemma shows that circle graphs excluding bicliques (and thus, cliques) as subgraphs are degenerate.
Since we will use it for colouring, it is worth comparing it to a~result of Davies and McCarty stating
that $K_t$-free circle graphs have bounded chromatic number~\cite{davies2021circle,davies2022circle}.
Their hypothesis is weaker (arbitrary bicliques are allowed),
and the $O(t \log t)$ bound in~\cite{davies2022circle} is better,
but this bound applies only to the chromatic number and not to the degeneracy, and the proof is far more complex.
\begin{lemma}
    \label{lem:circle-graph-degenerate}
    Circle graphs that are $K_t$-free and $K_{t,t}$-subgraph-free
    have maximum edge density at most $2(t-1)^2$.
\end{lemma}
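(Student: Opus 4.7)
The plan is to prove the stronger statement that any such circle graph $G$ is $2(t-1)^2$-degenerate: every induced subgraph contains a vertex of degree at most $2(t-1)^2$. From this, the max edge density bound follows by the standard peeling argument, since induced subgraphs of circle graphs are circle graphs and both hypotheses ($K_t$-free and $K_{t,t}$-subgraph-free) are inherited by subgraphs. It is therefore enough to exhibit a single chord $C$ of degree at most $2(t-1)^2$ in $G$.

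Fix a chord representation of $G$. For a chord $C$, its neighbors split naturally into two groups according to which of $C$'s two arcs contains their ``outer'' endpoint, and by symmetry it suffices to bound one group by $(t-1)^2$. Within one group the chords all pierce $C$ from the same side and thus pairwise intersect on the same half-disc, so their induced overlap graph is a permutation graph: sorting the chords by the position of one endpoint along $C$, two chords overlap iff their other endpoints appear in the same order. Cliques then correspond to increasing subsequences and independent sets to nested chains of chords.

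Since $C$ is adjacent to every chord in this group, $K_t$-freeness forbids a $K_{t-1}$ among them, so the associated permutation has no increasing subsequence of length $t-1$; by the Dilworth/Mirsky duality, it decomposes into at most $t-2$ nested chains. I would then use $K_{t,t}$-subgraph-freeness to bound each chain to length at most $t-1$: the idea is that a long nested chain in the group, together with $C$, exhibits a $K_{1,\ell}$ common-neighborhood pattern, and for a carefully extremal choice of $C$ (for example, a chord whose shorter arc contains the fewest endpoints), one can extract $t-1$ further chords each crossing many chain members; combined with $C$ this produces a $K_{t,t}$ and contradicts the hypothesis. Summing over the $\le t-2$ chains, each group has at most $(t-2)(t-1)\le (t-1)^2$ chords, and the two groups together give $\deg(C)\le 2(t-1)^2$.

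The hard part is making the chain-length bound rigorous: a long nested chain in isolation does not force a $K_{t,t}$, so one must combine the chain structure with the geometry of intervals sharing a common piercing point and with the extremal property of $C$ to actually produce the $t-1$ auxiliary chords needed for the contradicting biclique. This is the delicate combinatorial step where the two hypotheses interact non-trivially, and will likely require an Erdős--Szekeres or Ramsey-type argument applied a second time inside the chain, exploiting the fact that every external chord crosses a contiguous range of nested chain members.
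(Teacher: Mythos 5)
There is a genuine gap, and you have correctly located it yourself: the nested-chain length bound is the crux and your sketch does not close it. A nested chain $A_1\subsetneq\dots\subsetneq A_\ell$ of chords, all crossing $C$ on the same side, is an independent set in the overlap graph, so on its own it exhibits no biclique at all, regardless of $\ell$. Your suggestion — pick $C$ extremal (e.g.\ fewest endpoints in its shorter arc) and then ``extract $t-1$ auxiliary chords each crossing many chain members'' — is exactly the part that needs a proof, and nothing in the setup guarantees those auxiliary chords exist. A second Erd\H{o}s--Szekeres pass inside the chain will not help either, because the chain is already totally ordered by inclusion and carries no additional permutation structure to extract from. So the proposal as written does not prove the lemma, and since you are attempting the stronger degeneracy bound $2(t-1)^2$ (the lemma only claims edge density $2(t-1)^2$, which corresponds to $4(t-1)^2$-degeneracy), you would also need to verify that this stronger statement is even true.

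The paper's proof sidesteps the need for an extremal vertex by orienting every edge. For an edge $AB$, call $C$ a \emph{private ancestor of $A$} if $A\subseteq C$ and $B\not\subseteq C$ (and symmetrically for $B$, with $A$ and $B$ counted as their own private ancestors). The key observation is that every private ancestor of $A$ overlaps every private ancestor of $B$, so they form a biclique; $K_{t,t}$-freeness then forces one endpoint to have fewer than $t$ private ancestors, and the edge is oriented toward it. With this orientation in hand, one fixes $I=[x,y]$ and looks at the out-neighbours containing $x$: a chain $A_1\subsetneq\dots\subsetneq A_s$ among them consists entirely of private ancestors of $A_1$ w.r.t.\ the edge $A_1 I$, and the orientation rule \emph{already} says $A_1$ has fewer than $t$ such ancestors, so $s<t$ — no second appeal to $K_{t,t}$ is needed. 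Antichains of size $t$ are ruled out by $K_t$-freeness exactly as in your sketch, and Dilworth gives $(t-1)^2$ per side, hence out-degree $\le 2(t-1)^2$ and the edge-density bound. The difference from your approach is structural: you try to make both hypotheses bite on a single distinguished vertex, whereas the paper lets $K_{t,t}$-freeness define a global orientation once and for all, and the chain bound then falls out for free from the orientation rather than requiring its own construction.
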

\begin{proof}
    Let~$\Ic$ be a~family of intervals whose overlap graph is $K_t$-free and $K_{t,t}$-subgraph-free.
    We will define an orientation of the edges with out-degree at most~$2(t-1)^2$.

    Consider two overlapping intervals~$A,B \in \Ic$.
    Relative to this edge, say that~$C \in \Ic$ is a~\emph{private ancestor of~$A$}
    if~$A \subseteq C$ and~$B \not\subseteq C$, and symmetrically with~$B$.
    We consider that~$A$ and~$B$ are also private ancestors of themselves.
    \begin{claim}
        Every private ancestor of~$A$ overlaps with every private ancestor of~$B$.
    \end{claim}
    \begin{claimproof}
        Let~$C,D$ be private ancestors of~$A,B$ respectively, with possibly~$C = A$ or~$D = B$.
        Pick~$x \in a~\cap B$ (which is non-empty as they overlap),
        and~$y \in a~\setminus D$ (which is non-empty by assumption),
        and symmetrically~$z \in B \setminus C$.
        Then, because~$A \subseteq C$ and~$B \subseteq D$,
        we also have~$x \in C \cap D$, $y \in C \setminus D$, and~$z \in D \setminus C$,
        hence~$C$ and~$D$ overlap.
    \end{claimproof}
    Thus, there is a~biclique between~$A$ and its private ancestors on the one hand,
    and~$B$ and its private ancestors on the other.
    Since the overlap graph does not contain a~$K_{t,t}$ subgraph,
    it follows that either~$A$ or~$B$ has less than~$t$ private ancestors.
    We orient the edge~$AB$ towards~$A$ if it has less than~$t$ private ancestors,
    and towards~$B$ otherwise.

    Let us bound the out-degree of this orientation.
    Fix~$I \in \Ic$, an interval with endpoints $x < y$,
    and consider~$N^+(I)$ the set of~$A \in \Ic$ with an edge oriented from~$I$ to~$A$.
    Any~$A \in N^+(I)$ contains either~$x$ or~$y$, but not both.
    Let~$X$ be the set of intervals in~$N^+(I)$ containing~$x$,
    and consider the inclusion poset~$(X,\subseteq)$.
    \begin{claim}
        The poset~$(X,\subseteq)$ does not contain a~chain of length~$t$.
    \end{claim}
    \begin{claimproof}
        If~$A_1 \subsetneq \dots \subsetneq A_s$ are in~$X$,
        then~$A_1$ overlaps with~$I$, and each~$A_i$ is a~private ancestor of~$A_1$ w.r.t.\ the edge~$A_1I$.
        From the orientation of the edge from~$I$ to~$A_1$,
        we know that~$A_1$ has less than~$t$ private ancestors, hence $s < t$.
    \end{claimproof}
    \begin{claim}
        The poset~$(X,\subseteq)$ does not contain an antichain of size~$t$.
    \end{claim}
    \begin{claimproof}
        If~$A,B \in X$ are incomparable for inclusion, then they overlap because both contain~$x$.
        Thus an antichain in the poset~$(X,\subseteq)$ is a~clique in the overlap graph.
    \end{claimproof}
    By Dilworth's theorem, it follows from the two claims that~$\card{X} \le (t-1)^2$.
    The same reasoning applies to intervals in~$N^+(I)$ containing~$y$,
    and we obtain~$\card{N^+(I)} \le 2(t-1)^2$.
\end{proof}

Finally, for algorithmic purposes, let us sketch
how to efficiently compute circle graphs from their interval representation.
\begin{lemma}
    \label{lem:circle-graph-construction}
    Given~$\Ic$ a~family of intervals in~$[n]$,
    one can compute the overlap graph~$G$ of~$\Ic$
    in time $O(n+\card{\Ic}+\card{E(G)})$.
\end{lemma}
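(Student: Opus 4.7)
The plan is to run a sweep-line algorithm on $[n]$, maintaining the currently \emph{active} intervals (those already opened but not yet closed) in a doubly linked list $L$, in the order they were opened. I would first build a list of endpoint events sorted by position via bucket sort in time $O(n+\card{\Ic})$, with the convention that at equal positions left-endpoint events come before right-endpoint events, so as to match the paper's definition of overlap on coincident endpoints.

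The sweep then processes these events in order. When the left endpoint of an interval $I$ is encountered, append $I$ to the tail of $L$ together with a pointer from $I$ to its cell in $L$; this is $O(1)$ work. When the right endpoint of $I$ is encountered, traverse $L$ from the successor of $I$ to the tail: any interval $J$ visited is still active (so its right endpoint exceeds that of $I$) and was opened after $I$ (so its left endpoint is at least that of $I$), and these endpoints can only coincide if $I$ and $J$ are in a containment relation, which contradicts $J$ being still active when $I$ closes. Hence $I$ and $J$ overlap, and I output the edge $IJ$. Finally, remove $I$ from $L$ in constant time.

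For correctness, any overlapping pair $\{I,J\}$ with $a_I < a_J$ satisfies $a_I < a_J \le b_I < b_J$, so $I$ closes first while $J$ is still active and lies after $I$ in $L$; the edge is reported exactly once, when $I$ closes. For the running time, each interval contributes $O(1)$ to opening/closing bookkeeping, and the total cost of all traversals is proportional to the number of reported edges, yielding the bound $O(n+\card{\Ic}+\card{E(G)})$.

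The main point to verify carefully is the handling of coincident endpoints: intervals sharing a left endpoint, or sharing a right endpoint, are in a containment relation and do not overlap, whereas an interval closing at position $x$ and one opening at $x$ do overlap by the stated definition. Ordering left-endpoint events before right-endpoint events at each position resolves both cases at once, and is really the only delicate step in an otherwise standard sweep-line argument.
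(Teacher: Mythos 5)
Your sweep-line variant (report an edge when $I$ \emph{closes}, against the later-opened, still-active intervals) is a legitimate alternative to the paper's approach (keep $L$ sorted by right endpoint and report edges when $I$ \emph{opens}, during insertion), and the amortised running-time analysis is the same in spirit. However, there is a genuine gap in your handling of coincident endpoints, which you yourself flag as the delicate step. Your argument is: the inequalities $a_I \le a_J$ and $b_I \le b_J$ can only collapse to equality if one interval contains the other, ``which contradicts $J$ being still active when $I$ closes.'' That conclusion is false. Take $I=[1,3]$ and $J=[1,5]$: here $a_I=a_J$, $I\subsetneq J$, and $J$ is certainly still active when $I$ closes at position $3$. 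If the two left-endpoint events at position $1$ are processed with $I$ before $J$, then $J$ sits after $I$ in $L$ and your traversal outputs the edge $IJ$, even though $I$ and $J$ do not overlap. A symmetric failure occurs for $I=[1,5]$, $J=[3,5]$ if $I$'s right-endpoint event is processed before $J$'s at position $5$.

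The algorithm can be repaired, but you must actually specify the within-bucket tiebreaks rather than appeal to a contradiction that does not exist: process left-endpoint events at the same position in decreasing order of right endpoint (so a superinterval is appended before any subinterval sharing its left end), and process right-endpoint events at the same position in decreasing order of left endpoint (equivalently, close the most recently opened interval first). With those conventions, every successor $J$ of $I$ in $L$ at the moment $I$ closes really does satisfy $a_I<a_J\le b_I<b_J$, and the correctness claim goes through. As written, the proof does not establish this, and the single tiebreak you do state (left events before right events at equal positions) is not sufficient.
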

\begin{proof}
    We iterate for~$i$ from~$1$ to~$n$,
    while maintaining the list~$L$ of intervals containing~$i$, sorted by their right endpoint.
    In the~$i$th iteration, we consider all intervals of the form~$[i,j]$ sorted by \emph{decreasing} values of~$j$, and naively insert each of them in~$L$.
    Suppose that~$I = [i,j]$ is thus inserted into~$L$ in position~$k$, after~$J_1,\dots,J_{k-1}$.
    For $\ell < k$, if~$J_\ell = [a_\ell,b_\ell]$, then it must be that $a_\ell < i \le b_\ell < j$.
    It follows that~$I$ overlaps with~$J_1,\dots,J_{k-1}$.
    Furthermore, any interval~$[a,b]$ which overlaps with~$I$ and satisfies~$a < i$ will be one of the~$J_\ell$.
    Thus, during the insertion of~$I$, we can output all edges from~$I$ to intervals ``to its left'',
    and the cost of the insertion of~$I$ is proportional to the number of such edges.
    Finally, having inserted all intervals with left endpoint~$i$,
    we remove from the beginning of~$L$ all intervals with right endpoint~$i$,
    before proceeding with~$i+1$.
\end{proof}

\subsection{First-order transductions}
For the sake of \cref{thm:tww-FO-desc}, let us introduce first-order logic, interpretations, and transductions.

A \emph{relational signature} is a~set~$\Sigma = \{R_1,\dots,R_k\}$ of \emph{relation symbols},
each with an associated \emph{arity}~$\arity(R_i) \in \Nn$.
A \emph{$\Sigma$-structure}~$S$ is defined by a~\emph{domain} (or vertex set)~$V(S)$,
and, for each relation symbol~$R \in \Sigma$ of arity~$r := \arity(R)$,
a \emph{realisation} $R(S) \subseteq V(S)^r$ of this relation.
For example, (simple undirected) graphs are structures over the signature~$\{E\}$ with~$\arity(E) = 2$
(with the specificity that the realisation of~$E$ is symmetric and irreflexive),
and biorders are structures over the signature~$\{<_1,<_2\}$, $\arity(<_i) = 2$
(the realisations being total orders).
When all relation symbols have arity~2, the signature and its structures are called \emph{binary}.
Recall from \cref{sec:twinwidth} that binary structures are the objects for which twin-width is defined.

An \emph{FO formula}~$\phi$ over the language of~$\Sigma$ can quantify on vertices,
and test whether a~relation~$R \in \Sigma$ holds for a~given tuple of vertices.
For example, over the language of graphs, the formula
\[ \phi(x,y) = E(x,y) \lor \exists z~E(x,z) \land E(z,y) \]
expresses that~$x,y$ are at distance at most~two.
If~$\phi$ is a~formula over the language of~$\Sigma$ and~$S$ is a~$\Sigma$-structure,
then~$S \models \phi$ denotes that~$\phi$ is \emph{satisfied} by~$S$, which is defined in the obvious way.

Given two signatures~$\Sigma,\Gamma$, an \emph{FO interpretation}~$\Phi$ from~$\Sigma$ to~$\Gamma$
is a~map, defined in FO logic, from $\Sigma$-structures to $\Gamma$-structures.
Precisely, $\Phi$ is described by giving
\begin{enumerate}
    \item for each relation~$R \in \Gamma$, a~formula~$\phi_R(x_1,\dots,x_r)$ over the language of~$\Sigma$,
        with as many free variables as the arity $r := \arity(R)$ of~$R$, and
    \item one last formula~$\phi_{dom}(x)$ with one free variable, again over the language of~$\Sigma$.
\end{enumerate} 
Given a~$\Sigma$-structure~$S$, its image~$\Phi(S)$ is defined as follows.
\begin{enumerate}
    \item The domain consists of vertices satisfying~$\phi_{dom}$,
    i.e.\
    \[ V(\Phi(S)) = \{x \in V(S) \ : \ S \models \phi_{dom}(x) \}. \]
    \item For each symbol~$R \in \Gamma$, the realisation~$R(\Phi(S))$ is described by~$\phi_R$,
    i.e.\ for~$x_1,\dots,x_r \in V(\Phi(S))$,
    \[ (x_1,\dots,x_r) \in R(\Phi(S)) \quad \iff \quad S \models \phi_R(x_1,\dots,x_r). \]
\end{enumerate}

\emph{Transductions} are a~non-deterministic generalisation of interpretations.
Let~$\Sigma$ be a~signature, and~$C_1,\dots,C_k$ be~$k$ unary relation symbols (i.e.\ with arity~1), disjoint from~$\Sigma$.
The $k$-colouring is the one-to-many operation which maps a~$\Sigma$-structure $S$
to all possible extensions of~$S$ as $(\Sigma \uplus \{C_1,\dots,C_k\})$-structures~$S^+$,
meaning that~$V(S) = V(S^+)$ and~$R(S) = R(S^+)$ for any~$R \in \Sigma$,
while the~$C_i(S^+)$ are chosen to be arbitrary subsets of~$V(S^+)$.
An \emph{FO transduction} is the composition of a~$k$-colouring (with~$k$ fixed), followed by an FO interpretation.

It is folklore that interpretations and transductions can be composed.
\begin{lemma}
    \label{lem:transduction-comp}
    If~$\Phi,\Psi$ are FO transductions (resp.\ interpretations) from~$\Sigma$ to~$\Gamma$ and~$\Gamma$ to~$\Delta$ respectively,
    then the composition~$\Psi \circ \Phi$ is an FO transduction (resp.\ interpretation) from~$\Sigma$ to~$\Delta$.
\end{lemma}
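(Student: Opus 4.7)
The plan is to first treat the interpretation case by syntactic substitution, and then reduce the transduction case to it by commuting colourings with interpretations.

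For the interpretation case, given $\Phi \colon \Sigma \to \Gamma$ described by $(\phi_{dom}, \{\phi_R\}_{R \in \Gamma})$ and $\Psi \colon \Gamma \to \Delta$ described by $(\psi_{dom}, \{\psi_R\}_{R \in \Delta})$, I would define $\Psi \circ \Phi$ via a syntactic transformation $\psi \mapsto \psi^*$ applied to each formula of $\Psi$: replace every atomic subformula $R(y_1,\dots,y_r)$ with $R \in \Gamma$ by $\phi_R(y_1,\dots,y_r)$, and relativise every quantifier $\exists z\, \chi$ to $\exists z\, (\phi_{dom}(z) \wedge \chi)$, dually for $\forall$. A routine induction on the structure of $\psi$ then shows that for every $\Sigma$-structure $S$ and every tuple $\bar{x}$ of elements of $V(\Phi(S))$, one has $\Phi(S) \models \psi(\bar{x})$ iff $S \models \psi^*(\bar{x})$. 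Setting $\theta_{dom} := \phi_{dom} \wedge \psi_{dom}^*$ and $\theta_R := \psi_R^*$ for each $R \in \Delta$ defines $\Psi \circ \Phi$ as an FO interpretation from $\Sigma$ to $\Delta$.

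For the transduction case, I would decompose $\Phi = I_\Phi \circ c_k$ and $\Psi = I_\Psi \circ c_\ell$, where $c_m$ denotes the nondeterministic $m$-colouring operation. The key step is to show that $c_\ell \circ I_\Phi = I'_\Phi \circ c_\ell$ for some interpretation $I'_\Phi$ over the enriched signature $\Sigma \uplus \{C_1,\dots,C_\ell\}$: $I'_\Phi$ uses the same formulas as $I_\Phi$ for the relations of $\Gamma$, and additionally realises each new colour predicate $C_i$ by the formula $C_i(x) \wedge \phi_{dom}(x)$. Once this commutation is checked, the composition rewrites as $\Psi \circ \Phi = I_\Psi \circ I'_\Phi \circ c_\ell \circ c_k$; two successive colourings with $\ell$ and $k$ predicates amount to a single colouring with $k+\ell$ predicates, and the interpretation composition handled above shows that $I_\Psi \circ I'_\Phi$ is itself an interpretation, completing the argument.

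The main difficulty is not conceptual but notational, and lies in careful bookkeeping of free variables and signatures. The two subtleties worth flagging are the relativisation of quantifiers inside $\psi^*$, without which the induction fails as soon as $\psi$ quantifies over elements outside the domain of $\Phi$, and the extra conjunct $\phi_{dom}(x)$ when transferring each $C_i$ across $I_\Phi$, which prevents colours on elements outside $V(\Phi(S))$ from leaking into $I'_\Phi$.
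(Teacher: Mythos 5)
The paper does not give a proof of this lemma; it is stated as folklore immediately after \cref{lem:transduction-comp} is introduced, so there is no paper argument to compare against. Your proof is the standard one and it is correct: the interpretation case by syntactic substitution together with quantifier relativisation (and the translation invariance established by induction on formula structure), and the transduction case by pushing the colouring $c_\ell$ through $I_\Phi$ and merging the two colourings into one with $k+\ell$ fresh unary predicates. The two subtleties you flag are exactly the ones that tend to be glossed over: unrelativised quantifiers make the inductive equivalence fail at the quantifier step, and the conjunct $\phi_{dom}$ in the transfer of $C_i$ is the clean way to ensure $c_\ell \circ I_\Phi$ and $I'_\Phi \circ c_\ell$ produce the same family of coloured structures (strictly speaking that conjunct is optional, since $C_i$ is only ever queried on $V(\Phi(S))$, but including it makes the commutation an equality of output sets rather than an equality up to irrelevant colours). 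One small bookkeeping point you could make explicit: when forming $c_\ell \circ c_k = c_{k+\ell}$, the $\ell$ colour predicates introduced for $\Psi$ must be chosen disjoint from the $k$ introduced for $\Phi$, and $I'_\Phi$ must then be read as an interpretation from $\Sigma \uplus \{C_1,\dots,C_k\} \uplus \{C'_1,\dots,C'_\ell\}$ to $\Gamma \uplus \{C'_1,\dots,C'_\ell\}$ — but this is exactly the notational care you already anticipate.
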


Finally, transductions preserve bounded twin-width in the following sense.
\begin{theorem}[{\cite[Theorem~8.1]{twin-width1}}]
    \label{thm:transduction-tww}
    For any FO transduction~$\Phi$, there is a~function~$f : \Nn \to \Nn$ such that
    for any binary structures~$S$ and~$T \in \Phi(S)$, $\tww(T) \le f(\tww(S))$.
\end{theorem}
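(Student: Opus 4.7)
The plan is to split $\Phi$ into its two constituent operations---a $k$-colouring followed by an FO interpretation---and handle each separately. For the $k$-colouring, which expands $S$ to $S^+$ by adding $k$ unary predicates $C_1,\dots,C_k$, any contraction sequence of $S$ lifts verbatim to one of $S^+$ with the width increased by at most $k$: each new unary relation $C_i$ is captured by at most one extra error per part, corresponding to whether the part contains both elements of $C_i$ and elements outside it. So the substantive task is to show that FO interpretations preserve bounded twin-width.

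For an FO interpretation $\Phi$ defined by a domain formula $\phi_{dom}$ and relation formulas $\{\phi_R\}_{R \in \Gamma}$, I would fix a contraction sequence $\Pc_n,\dots,\Pc_1$ of $S$ of width $t$ and aim to produce one for $\Phi(S)$ of width bounded by some $f(t)$ depending on $\Phi$. Since $V(\Phi(S)) \subseteq V(S)$, one restricts each $\Pc_i$ to $V(\Phi(S))$ without increasing widths. What remains is to bound, for each relation $R \in \Gamma$, the error degree of parts with respect to the realisation of $\phi_R$ in $\Phi(S)$. The key device is to attach to every vertex $v$ and every level $\Pc_i$ an FO \emph{type} of quantifier rank $q$ (with $q$ chosen large enough to cover all $\phi_R$), recording which formulas $v$ satisfies when its free parameters are instantiated by representatives of the parts of $\Pc_i$. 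Refining each $\Pc_i$ by splitting parts into sub-parts of uniform type produces a sequence in which any $\phi_R$-error between two parts is witnessed by a type disagreement, so the error degree in the refined sequence is bounded by the number of possible types.

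The main obstacle---and the technical heart of the proof---is to show that (i) the number of such types is bounded by a function $g(t,q)$, and (ii) types evolve compositionally as parts are merged along the sequence. Existential quantifiers are the delicate case: evaluating $\exists z\,\psi(x,z,\bar{y})$ at a vertex $v$ would seem to require $\psi$-type information about every vertex of $S$, but the bounded-width hypothesis ensures that up to a bounded number of error neighbours, all parts behave homogeneously, so the witness search can be resolved by inspecting at most one representative per part together with the $O(t)$ error neighbours of $v$. Formalising this by induction on quantifier depth yields the desired type-tracking machinery, with the resulting bound on $f$ being of tower type in the quantifier rank of $\Phi$.

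A quicker conceptual alternative would be to apply \cref{lem:tww-mixed-minor}: take the linear order on $V(S)$ in which each $R_i$-adjacency matrix is $(2t+2)$-mixed free, inherit it on $V(\Phi(S))$, and show that each $\phi_R$-adjacency matrix in this order is still $k$-mixed free for some $k$ depending on $\Phi$; then invoke the converse direction (stated in the excerpt just after \cref{lem:tww-mixed-minor} and proved in~\cite{twin-width1}). Either route reduces the theorem to the same underlying phenomenon: FO formulas of bounded quantifier rank cannot distinguish arbitrarily fine details within parts of a bounded-width partition, and a bounded amount of local information per part is enough to decide them.
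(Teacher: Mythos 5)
This theorem is not proved in the paper: it is stated purely as a citation to~\cite[Theorem~8.1]{twin-width1}, so there is no ``paper's own proof'' to compare against. What the paper actually uses is the statement as a black box inside the proof of \cref{thm:tww-FO-desc}.

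That said, your sketch is a reasonable high-level outline of the argument given in the source~\cite{twin-width1}, which indeed proceeds by type-tracking along a contraction sequence: one refines each partition so that vertices in the same part have the same bounded-quantifier-rank type relative to the current partition, and the bounded error degree is what makes the existential case manageable (a witness can be found by inspecting a representative per part together with the $O(t)$ error neighbours). Two caveats are worth flagging. First, your claim that a $k$-colouring ``lifts verbatim, increasing the width by at most $k$'' is not correct as stated: unary marks, once encoded as binary relations, interact with the contraction sequence in a more subtle way, and the actual bound in~\cite{twin-width1} is exponential in the number of colours, not additive; more importantly the naive lift of the same contraction sequence does not in general witness the bound, one must argue that some other sequence exists. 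Second, the paragraph describing the type refinement is the crux of the theorem and is stated at the level of intent rather than proof; in particular the compositionality of types under merging and the quantitative control on the number of types are precisely where all the work lives, and a full treatment is far from immediate. Your ``quicker alternative'' via \cref{lem:tww-mixed-minor} and its converse is a plausible route but is not obviously shorter: showing that each $\phi_R$-adjacency matrix remains $k$-mixed free under the order inherited from $S$ would again require a type-tracking argument to rule out large mixed minors, so it is more of a repackaging than a shortcut. In summary, your outline points in the right direction, but none of this needs to be carried out here; citing~\cite[Theorem~8.1]{twin-width1} is the intended (and the paper's actual) resolution.
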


\section{Delayed substitutions}\label{sec:delayed}
In this section, we recall the definition of delayed decompositions
from \cite[Section 2]{tww-poly-chi-bounded}, which generalise substitutions.
Since we consider biorders and not arbitrary binary relations,
we obtain some additional structure on these delayed decompositions.
We show that they can be expressed as a~bounded product of substitutions.

\subsection{Definition}
A \emph{delayed structured tree}~$\left(T,<,\{\prec_t\}_{t \in T}\right)$
consists of an ordered tree~$(T,<)$, equipped with, for each node~$t \in T$,
a linear order~$\prec_t$ on the \emph{grandchildren} of~$t$.
This is analogous to the trees describing substitutions,
except that~$\prec_t$ is defined on the grandchildren instead of the children, hence `delayed'.
We add the technical requirement that each leaf is a~single child (with no siblings),
so that whenever~$x \neq y$ are leaves, their closest ancestor is at distance at least~2.

The \emph{realisation} of this delayed structured tree is the structure~$(L(T),<,\prec)$,
where for two leaves~$x,y$, we have~$x \prec y$ if and only if~$x' \prec_t y'$,
where~$t$ is the closest ancestor of~$x,y$, and~$x',y'$ are the grandchildren of~$t$
which are ancestors of~$x,y$ respectively.
In general, this binary relation~$\prec$ is not an order, as it might not be transitive.
We call the delayed structured tree \emph{well-formed} if~$\prec$ is a~linear order,
so that the realisation is a~permutation on~$L(T)$.
We only consider well-formed delayed structured trees.
Remark that in the realisation of the tree, we only use the linear order~$\prec_t$ between cousins;
the order between siblings is irrelevant.

We say that the permutation $(<,\prec)$ is obtained by \emph{delayed substitution}
from the permutations $\{(<_t,\prec_t)\}_{t \in T}$, understood as permutations on the grandchildren of~$t$.
If for each~$t \in T$ the permutation~$(<_t,\prec_t)$ on its grandchildren is in a~given class~$\Cc$,
we also say that the delayed structured tree~$T$ is \emph{labelled} with permutations in~$\Cc$,
and that the permutation~$(<,\prec)$ is obtained by \emph{delayed substitution from~$\Cc$}.

\subsection{Distinguishability}
The linear order~$\prec$ is defined on the leaves of~$T$.
We extend it to a~partial order on all nodes of~$T$ where~$x \prec y$ iff~$L(x) \prec L(y)$,
i.e.\ all descendants of~$x$ are strictly before all descendants of~$y$ for~$\prec$.
\begin{remark}
    \label{rmk:cousins-ordered}
    Let~$x,y$ be nodes in~$T$, and~$t$ their least common ancestor.
    If~$t$ is at distance at least~$2$ of both~$x$ and~$y$,
    then~$x,y$ are comparable by~$\prec$, meaning either~$x \prec y$ or~$y \prec x$.
    In particular, if~$x,y$ are at the same level in~$T$ but are not siblings,
    then they are comparable by~$\prec$.
\end{remark}
Being incomparable by~$\prec$ is not an equivalence relation, even when restricted to siblings:
one may have three siblings~$x,y,z$ such that~$x \prec z$,
but~$L(x),L(y)$, resp.~$L(y),L(z)$ are interleaving for~$\prec$.
However, we can define an equivalence relation by considering how other nodes can separate siblings:
let~$x,y$ be two children of a~node~$t$, and~$v \in L(T) \setminus L(t)$.
By \cref{rmk:cousins-ordered}, $v$ is comparable by~$\prec$ to both~$x$ and~$y$.
We say that~$x$ and~$y$ are \emph{distinguished} by~$v$
if $x \prec v \prec y$ or $y \prec v \prec x$.
One can check that if~$v \in L(T) \setminus L(t)$ distinguishes~$x,y$, it must be a~descendant of a~cousin of~$x,y$.
Finally, we call~$x,y$ \emph{indistinguishable}, denoted~$x \sim y$,
if~$x,y$ are siblings with parent~$t$, and no $v \in L(T) \setminus L(t)$ distinguishes~$x$ and~$y$.
See \cref{fig:distinguishability} for an example.

\begin{figure}
    \centering
    \begin{tikzpicture}[node/.style={fill,circle,inner sep=0.9mm},
                        arc/.style={very thick,-{>[length=2mm, width=2mm]}}]
        \node[node] (t) at (-1,2.7) [label=above:$t$] {};
        \foreach \i/\x in {0/-4,1/0,2/3}{
            \node[node] (c\i) at (\x,1.8) {};
            \draw[very thin] (c\i) -- (t);
        }
        \def\gr{green!50!black}
        \foreach \x/\p/\i/\c in {
            -6/c0/8/blue,-5/c0/1/green,-4/c0/3/green,-3/c0/9/blue,-2/c0/2/green,
            -1/c1/7/red,0/c1/4/yellow,1/c1/6/red,
            2/c2/10/\gr,3/c2/5/orange,4/c2/11/\gr%
        }{
            \node[node,fill=\c,draw=black] (\i) at (\x,0) {};
            \node at (\x,-1) {\i};
            \draw[very thin] (\i) -- (\p);
        }
        \foreach \i in {2,...,11}{
            \pgfmathtruncatemacro\j{\i-1}
            \draw[gray,arc] (\j) edge [bend left=30] (\i);
        }
    \end{tikzpicture}
    \caption{%
        Indistinguishability for the grandchildren of a~node~$t$.
        The values and arrows represent the linear order~$\prec_t$,
        and colours represent equivalence classes of indistinguishability.
    }
    \label{fig:distinguishability}
\end{figure}
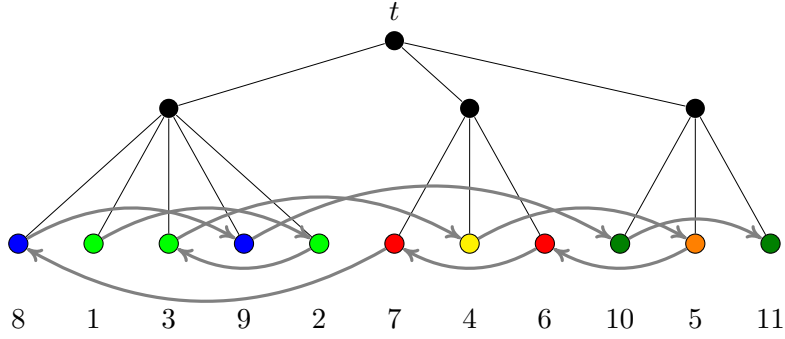

\begin{lemma}
    \label{lem:indist-equiv}
    Indistinguishability is an equivalence relation.
\end{lemma}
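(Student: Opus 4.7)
The plan is to verify reflexivity and symmetry directly from the definition, and then reduce transitivity to a short trichotomy argument leveraging \cref{rmk:cousins-ordered}. Reflexivity ($x \sim x$) is immediate since $x \prec v \prec x$ is impossible, and symmetry follows from the symmetric form of the ``distinguishes'' condition.

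For transitivity, I would assume $x \sim y$ and $y \sim z$, noting that by definition $x,y,z$ are all siblings sharing some common parent $t$. Suppose toward contradiction that some leaf $v \in L(T) \setminus L(t)$ distinguishes $x$ and $z$; without loss of generality, $x \prec v \prec z$. The key step is to show that $v$ is comparable by $\prec$ to $y$, which I would derive from \cref{rmk:cousins-ordered} applied to the least common ancestor~$u$ of $v$ and $y$. Here I need to check that $u$ sits at distance at least $2$ from both $v$ and $y$: since $v \notin L(t)$, $u$ must be a proper ancestor of $t$, which gives distance at least~$2$ to $y$ via the path $u \to \dots \to t \to y$; and the standing convention that each leaf has no sibling prevents $u$ from being the parent of~$v$ (otherwise $u$ would have $v$ as its only descendant leaf, making it impossible for $y$ to descend from $u$), so the distance from $u$ to $v$ is likewise at least $2$.

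Having established that $v$ and $y$ are comparable by $\prec$, trichotomy yields two cases. If $v \prec y$, then chaining with the assumption gives $x \prec v \prec y$, so $v$ distinguishes $x$ and $y$, contradicting $x \sim y$. If instead $y \prec v$, then $y \prec v \prec z$ shows $v$ distinguishes $y$ and $z$, contradicting $y \sim z$. Either way we get a contradiction, so no such $v$ exists and $x \sim z$.

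I do not expect any serious obstacle here: the only nontrivial point is the bookkeeping to apply \cref{rmk:cousins-ordered} to $(v,y)$, which hinges on the ``single child'' convention for leaves that was introduced precisely to make such comparability arguments go through. Once comparability is in hand, the contradiction is purely syntactic.
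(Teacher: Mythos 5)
Your proof is correct and follows essentially the same route as the paper's: reflexivity and symmetry by inspection, and transitivity by the trichotomy argument that any $v$ distinguishing $x,z$ must be comparable to $y$ and hence distinguish either $x,y$ or $y,z$. You in fact supply more detail than the paper, which simply asserts that $v$ and $y$ are comparable because $v \notin L(t)$; your careful verification that \cref{rmk:cousins-ordered} applies (using the single-child convention for leaves to rule out the LCA being $v$'s parent) makes the step explicit rather than implicit.
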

\begin{proof}
    From its definition, one can see that $\sim$ is reflexive and symmetric.
    Let~$x,y,z$ be siblings with parent~$t$, and suppose that~$v$ distinguishes~$x,z$, say $x \prec v \prec z$.
    Since $v \not\in L(t)$, $y$ and~$v$ are comparable.
    Thus it either holds that~$v \prec y$, in which case~$v$ distinguishes~$x,y$,
    or~$y \prec v$, and~$v$ distinguishes~$y,z$.
    By contraposition, $\sim$ is transitive.
\end{proof}
\begin{lemma}
    \label{lem:indist-realisation}
    Let~$x,y$ be grandchildren of~$t$ such that~$x \not\sim y$.
    Then $x \prec y$ if and only if $x \prec_t y$.
\end{lemma}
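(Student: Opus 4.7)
The plan is to split on whether the grandchildren $x,y$ of $t$ are cousins in $T$ (i.e.\ have distinct parents among the children of $t$) or siblings sharing a common parent $u$; note that the relation $\sim$ is only defined for siblings, so $x \not\sim y$ is vacuous in the cousin case.

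The cousin case is a direct unwinding of definitions. For any leaves $a \in L(x)$ and $b \in L(y)$, their least common ancestor is $t$, and the grandchildren of $t$ on the paths to $a,b$ are $x$ and $y$ themselves; hence $a \prec b \iff x \prec_t y$ by definition of the realisation. Taking this equivalence uniformly over all such $a,b$ yields $L(x) \prec L(y) \iff x \prec_t y$, which is exactly the extended version of the relation.

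For the sibling case, I would pick a distinguishing leaf $v \in L(T) \setminus L(u)$, say with $x \prec v \prec y$ in the extended order. The crucial intermediate step is to show that the least common ancestor $s$ of $v$ and $x$ is exactly $t$: since $v \notin L(u)$, the node $s$ is a strict ancestor of $u$; if $s$ were strictly above $t$, then $x$ and $y$ would lie in the same grandchild-of-$s$ subtree (namely, the one containing $u$), so $\prec_s$ would compare $v$ identically to $x$ and to $y$, contradicting $x \prec v \prec y$. Applying \cref{rmk:cousins-ordered} here is legitimate because the technical requirement that leaves are single children forces $s$ to sit at distance at least $2$ from the leaf $v$. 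Once $s = t$ is established, $v$ descends from some grandchild $w$ of $t$ that is a cousin of $x$ and $y$, and applying the cousin case to the pairs $(x,w)$ and $(w,y)$ translates the chain $x \prec v \prec y$ into $x \prec_t w \prec_t y$, yielding $x \prec_t y$ by transitivity of $\prec_t$. Meanwhile, transitivity of $\prec$ as a linear order on leaves lifts $x \prec v \prec y$ to $L(x) \prec L(y)$, i.e.\ $x \prec y$ extended. The symmetric sub-case $y \prec v \prec x$ supplies the converse direction of the equivalence.

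The main obstacle is the LCA argument above, i.e.\ ruling out that some ancestor of $t$ could govern the comparison between $v$ and the sibling pair $x,y$; this is where the single-child-leaf assumption and the fact that siblings share all ancestors strictly above their parent really come into play. The remainder of the proof is mechanical definition-chasing plus two applications of the cousin case.
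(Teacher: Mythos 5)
Your proposal is correct and tracks the paper's proof closely: dispatch cousins directly from the definition of the realisation, then in the sibling case locate the grandchild of $t$ above a distinguishing leaf and transfer through it via the cousin case. One step you leave implicit deserves a word: to apply the cousin case to the pair $(x,w)$ you need $x \prec w$ (i.e.\ $L(x) \prec L(w)$), but the hypothesis $x \prec v$ only compares $L(x)$ to the single leaf $v \in L(w)$; you should first note that $x,w$ are comparable by \cref{rmk:cousins-ordered}, and that $w \prec x$ is impossible since it would force $v \prec L(x)$ --- this is precisely the move the paper makes explicitly. Your LCA argument is a welcome explicit justification of the fact (which the paper states only as ``one can check'' just before \cref{lem:indist-equiv}) that a distinguishing leaf must descend from a cousin of $x,y$, although the parenthetical ``namely, the one containing $u$'' is only literally accurate when $s$ is the parent of~$t$; for higher $s$ the relevant grandchild of $s$ is some ancestor of $t$ rather than $u$ itself, though the argument goes through unchanged.
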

\begin{proof}
    Between cousins with grandparent~$t$, the orders~$\prec$ and~$\prec_t$ coincide by definition of the realisation.
    Thus, consider~$x,y$ siblings distinguished by some~$v$, say~$x \prec v \prec y$.
    Let~$v'$ be the cousin of~$x,y$ that is the ancestor of~$v$.
    Now~$x$ and~$v'$ are comparable by~$\prec$, and~$v' \prec x$ is impossible as it would imply~$v \prec x$,
    hence we have~$x \prec v'$. As noted at the beginning of the proof, this implies $x \prec_t v'$.
    The same reasoning gives~$v' \prec_t y$, hence~$x \prec_t y$,
    i.e.~$\prec$ and~$\prec_t$ coincide on the pair~$x,y$ as desired.
\end{proof}

A crucial property of indistinguishability is that it is compatible with~$\prec$ as follows.
\begin{lemma}
    \label{lem:indist-intervals}
    Let~$A$ be an equivalence class of~$\sim$.
    Then, among the leaves~$L(T)$, the subset $L(A) := \bigcup_{x \in A} L(x)$ is an interval for~$\prec$.
\end{lemma}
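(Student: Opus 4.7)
The plan is to argue by contradiction: assume there are leaves $u \prec w \prec v$ with $u, v \in L(A)$ but $w \notin L(A)$, and derive a contradiction. Let $t$ be the common parent of the nodes in $A$, and pick $x, y \in A$ with $u \in L(x)$ and $v \in L(y)$.

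I would first handle the easy case where $w \notin L(t)$. By the single-child-leaf convention and \cref{rmk:cousins-ordered}, such a $w$ is comparable by $\prec$ to every child of $t$. The inequalities $u \prec w$ and $w \prec v$ rule out $w \prec x$ and $y \prec w$, so we obtain $x \prec w \prec y$. This means that $w$ is a leaf of $L(T) \setminus L(t)$ distinguishing the siblings $x$ and $y$, contradicting $x \sim y$ (the degenerate case $x = y$ being ruled out by antisymmetry of $\prec$ on nodes).

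The more delicate case is $w \in L(t)$, so that $w \in L(z)$ for a unique child $z$ of $t$. Since $w \notin L(A)$, we have $z \notin A$, and in particular $z \notin \{x,y\}$. My goal would be to show $z \sim x$, placing $z \in A$ and contradicting the assumption. Assuming $z \not\sim x$, pick a distinguishing leaf $v' \in L(T) \setminus L(t)$. I would split on the two possible orderings of $v'$ relative to $x,z$. If $z \prec v' \prec x$, then $w \prec v' \prec u$, contradicting $u \prec w$. Otherwise $x \prec v' \prec z$, so $u \prec v' \prec w$; comparability of $y$ with $v'$ then gives two subcases: if $v' \prec y$, then $v'$ distinguishes $x$ and $y$, contradicting $x \sim y$; if $y \prec v'$, then $v \prec v' \prec w$, contradicting $w \prec v$. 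In every branch we reach a contradiction, completing the argument.

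The main obstacle I expect is bookkeeping: correctly invoking \cref{rmk:cousins-ordered} to ensure that all the comparability statements actually hold. The key subtlety is that the distinguisher $v'$ lies outside $L(t)$, which together with the single-child-leaf convention pushes the relevant least common ancestor far enough up the tree to guarantee distance at least two from each of $y$, $x$, $z$, and $v'$, making $\prec$ applicable between them.
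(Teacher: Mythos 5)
Your proof is correct, and its overall skeleton coincides with the paper's: argue by contradiction from three leaves $u\prec w\prec v$ with $u,v\in L(A)$ and $w\notin L(A)$, and split on whether $w$ is a descendant of the common parent $t$. The first case (your $w\notin L(t)$) is the same in both. In the second case, the two arguments diverge a little. The paper observes that the child $z$ of $t$ containing $w$ is distinguishable both from $x$ and from $y$, reads off the orientation of each distinguisher from the assumption $u\prec w\prec v$, and then chains the two to produce a single leaf outside $L(t)$ sitting between $x$ and $y$. You instead take only one distinguisher $v'$, namely for the pair $(x,z)$, and run an exhaustive case analysis on where $v'$ lands relative to $x$, $z$, and $y$, each branch contradicting either $u\prec w$, or $w\prec v$, or $x\sim y$. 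Both are valid; the paper's version is a touch shorter and more symmetric, yours trades the chaining step for an extra branching. Your final remark on why the single-child-leaf convention matters — it is what lets \cref{rmk:cousins-ordered} make a leaf $v'\notin L(t)$ comparable by $\prec$ to the children of $t$ — is exactly the right justification for all the comparability claims in both proofs.
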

\begin{proof}
    Let~$A$ be an equivalence class of~$\sim$.
    All elements of~$A$ are siblings. Consider~$t$ their parent.
    Let~$x,z \in L(A)$ and~$y \in L(T) \setminus L(A)$ be leaves,
    and suppose for a~contradiction that~$x \prec y \prec z$.
    Let~$x',z'$ be the ancestors in~$A$ of~$x,z$ respectively.
    We have two cases to consider.
    \begin{enumerate}
        \item If~$y$ is not a~descendant of~$t$, then~$x'$ and~$z'$ are comparable by~$\prec$ with~$y$,
        and it must be that~$x' \prec y \prec z'$, contradicting that~$x',z'$ are indistinguishable.
        \item Otherwise, $y$ is a~descendant of some child~$y'$ of~$t$,
        which is distinguishable from~$x'$ and from~$z'$.
        Thus there exist~$v_1, v_2 \not\in L(t)$ such that~$x' \prec v_1 \prec y'$ and $y' \prec v_2 \prec z'$.
        But then we also have~$x' \prec v_1 \prec z'$, a~contradiction.
        \qedhere
    \end{enumerate}
\end{proof}

\subsection{Factoring delayed substitutions}
We will now prove that any delayed substitution can be decomposed into products of substitutions.

\begin{lemma}
    \label{lem:delayed-decomp}
    Let~$\Cc$ be a~class of permutations closed under substitution, taking patterns and inverse.
    Then any permutation obtained by delayed substitution from~$\Cc$ is in $\Cc^3$.
\end{lemma}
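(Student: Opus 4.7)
The plan is to find two intermediate linear orders $<_1, <_2$ on $L(T)$ so that the realisation factors as
\[
    (L(T),<,\prec) \;=\; (L(T),<_2,\prec) \circ (L(T),<_1,<_2) \circ (L(T),<,<_1),
\]
with each of the three factors being a substitution from $\Cc$, hence in $\Cc^* = \Cc$ via \cref{lem:product-substitution}.

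The structural tool is the indistinguishability relation $\sim$ of \cref{lem:indist-equiv}. For each internal node $t$, the children of $t$ split into $\sim$-classes, and by \cref{lem:indist-intervals} each class $A$ yields an interval $L(A)$ of $\prec$. Thus the order $\prec_t$ on the grandchildren of $t$ splits cleanly into a \emph{coarse} part between $\sim$-classes (already determined by $\prec$) and a \emph{fine} part within each class (which can freely interleave grandchildren coming from different children of the class). I would define $<_1$ by keeping, at each $t$, the $\prec_t$-order between $\sim$-classes but collapsing the intra-class interleaving so that grandchildren from the same parent become consecutive and ordered by $<_t$. The order $<_2$ is defined analogously, except that inside each $\sim$-class the parents (children of $t$) are ordered using $\prec_t$ instead of $<_t$. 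Both $<_1$ and $<_2$ are then obtained by substituting these modified local orders into the delayed structured tree $T$.

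With this choice, the three factors should correspond to:
\begin{compactitem}
    \item $(<,<_1)$ — a substitution along $T$ whose local permutation at each $t$ only rearranges the children of $t$ into their $\sim$-class blocks following $\prec$, and which is a pattern of $(<_t,\prec_t)\in\Cc$ (extracted via one representative grandchild per child);
    \item $(<_1,<_2)$ — a substitution along a refinement of $T$ in which each $\sim$-class is contracted into a single auxiliary node, and whose local permutation at each class node is a pattern of $(<_t,\prec_t)$ restricted to that class;
    \item $(<_2,\prec)$ — the residual within-class rearrangement that restores the true $\prec_t$-interleaving, again described by patterns, possibly inverted, of $(<_t,\prec_t)$.
\end{compactitem}
By closure of $\Cc$ under patterns, inverse, and substitution, each local permutation lies in $\Cc$, and assembling along the relevant tree yields a substitution in $\Cc^* = \Cc$.

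The main obstacle is the fine-grained verification that each of the three local permutations at every $t$ can indeed be produced from $(<_t,\prec_t)$ using only the allowed operations, and that the three substitutions, built along three different but related trees, telescope exactly to $(<,\prec)$. Tracking how $\sim$-classes at different levels of $T$ interact, and in particular ensuring the global compatibility of $<_1$ and $<_2$ across cousins living in different subtrees, will be the delicate part of the argument.
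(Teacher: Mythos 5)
You have the right skeleton---factor through intermediate orders into three $\Cc$-permutations, with indistinguishability and \cref{lem:indist-intervals} as the structural tools---but the two places you flag as ``delicate'' are exactly where the content lies, and the sketch does not resolve them. A concrete problem with your~$<_1$: you order siblings in the same $\sim$-class by~$<_t$, so the local permutation of $(<,<_1)$ at a node~$t$ is an inflation of a pattern of $(<_t,\prec_t)$ by \emph{identity} permutations (one per $\sim$-class). Identities of length~$\ge 2$ need not be patterns of $(<_t,\prec_t)$, nor need they lie in~$\Cc$: the class of decreasing permutations is closed under substitution, patterns and inverse, but does not contain~$12$. The paper sidesteps this entirely by defining its single intermediate order~$<'$ via representatives ($x <'_t y$ iff $f_t(x)\prec f_t(y)$ for an arbitrary chosen descendant $f_t(x)$), so that the local permutation $(<_t,<'_t)$ is literally a pattern of $(<_t,\prec_t)$ with no identity-inflation. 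Similarly, your~$<_2$, if defined via representatives inside each class, is just the paper's~$<'$, and then $(<_2,\prec)$ is not a single $\Cc$-factor---showing it lies in $\Cc^2$ is the real work.

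The decisive device you are missing is the quotient tree $T' := T/\sim$ and the fact (\cref{clm:intervals-quotient-tree}) that~$T'$ is compatible with both~$<'$ and~$\prec$. This is what lets the paper write $(X,<',\prec)$ as a substitution along~$T'$; then at each node $\bar s$ of $T'$, the local permutation on a set~$R$ of pairwise-distinguished cousins factors through the original tree order~$<$ as $(R,<',<)\circ(R,<,\prec)$, each factor being a pattern of something already known to be in~$\Cc$ (using \cref{lem:indist-realisation} to identify $\prec$ with $\prec_t$ on $R$). Your ``refinement of $T$ in which each $\sim$-class is contracted'' gestures at this, but is not made precise, and without $T'$ you also have a consistency issue: your local rule at~$t$ orders grandchildren of~$t$ within a parent~$c$ by~$<_t$, while your local rule at~$c$ regroups the children of~$c$ by $\sim$-classes at~$c$---these two prescriptions for the order among children of~$c$ must agree for~$<_1$ to be a well-defined linear order, and you do not check this. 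In short: right plan, but the identity issue and the quotient-tree argument---which is the actual heart of the proof---are not there.
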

\begin{proof}
    Let~$(X,<,\prec)$ be a~biorder, realised by
    a~delayed structured tree~$(T,<,\{\prec_t\}_{t \in T})$ with leaves~$L(T) = X$, labelled in~$\Cc$.
    The linear order~$<$ is compatible with~$T$, while~$\prec$ is realised by the delayed substitution.

    Let us define an intermediate linear order~$<'$ on~$X$.
    For each internal node~$t$ of~$T$, and for each child~$x$ of~$t$, 
    choose an arbitrary descendant $f_t(x) \in X$ of~$x$.
    Then, on the children of~$t$, define $x <'_t y$ if and only if $f_t(x) \prec f_t(y)$.
    These local orders extend to a~linear order~$<'$ on~$X$, which by construction is compatible with~$T$.
    \begin{claim}
        For each~$t \in T$, the permutation~$(<,<'_t)$ on the children of~$t$ is in~$\Cc$.
    \end{claim}
    \begin{claimproof}
        Let~$A$ be the set of children of~$t$.
        For each~$x \in A$, let~$g(x)$ be the sole child of~$x$ which is an ancestor of~$f_t(x)$.
        Thus~$g(x)$ is a~grandchild of~$t$.
        For~$x \neq y \in A$, we have~$g(x) < g(y)$ if and only if~$x < y$, by compatibility of~$<$ with~$T$. 
        Furthermore, the closest ancestor of~$f_t(x), f_t(y)$ is $t$, 
        hence~$f_t(x) \prec f_t(y)$ if and only if $g(x) \prec_t g(y)$, by definition of~$\prec$.
        Thus the permutation~$(<,<'_t)$ on~$A$ is equal to the permutation~$(<,\prec_t)$ on~$g(A)$,
        which is a~pattern of the permutation~$(<,\prec_t)$ on all grandchildren of~$t$.
        The latter is in~$\Cc$ by hypothesis.
    \end{claimproof}
    Since~$\Cc$ is closed under substitution, it follows that the permutation~$(X,<,<')$ is in~$\Cc$.

    We now quotient the tree~$T$ by indistinguishability.
    Since~$\sim$ can only identify siblings, the quotient~$T' = T / \sim$ naturally has a~tree structure.
    Furthermore, since the leaves of~$T$ are required to be single children,
    no two leaves are identified, hence the set of leaves of~$T'$ is exactly~$X$.
    \begin{claim}
        \label{clm:intervals-quotient-tree}
        The tree~$T'$ is compatible with both~$<'$ and~$\prec$.
    \end{claim}
    \begin{claimproof}
        \Cref{lem:indist-intervals} precisely proves that~$T'$ is compatible with~$\prec$:
        each node of~$T'$, which is an equivalence class of~$\sim$, corresponds to an interval of~$(X,\prec)$.

        Since~$<'$ is compatible with~$T$, to show that it is also compatible with~$T' = T/\sim$,
        it is sufficient to consider only the children of a~given~$t \in T$.
        That is, it is enough to prove that among the children of~$t$,
        any equivalence class~$A$ for indistinguishability is an interval for~$<'$.
        Suppose for a~contradiction that there are children~$x <' y <' z$ of~$t$
        with~$x,z \in A$ and~$y \not\in A$.
        Since $x \not\sim y$, it must be that~$x \prec y$ or~$y \prec x$,
        but the latter is impossible as it would imply $y <' x$.
        Thus $x \prec y$, and similarly~$y \prec z$, contradicting \cref{lem:indist-intervals}.
    \end{claimproof}
    Thus the permutation~$(X,<',\prec)$ is obtained by substitution along the tree~$T'$.
    It only remains to show that this structured tree is labelled with permutations in~$\Cc^2$.
    
    Fix a~node~$\bar{s} \in T'$, which in~$T$ is an equivalence class of~$\sim$.
    Let~$t \in T$ be the parent of the nodes of~$\bar{s}$
    (only siblings can be in the same equivalence class for~$\sim$).
    The children in~$T'$ of~$\bar{s}$ are equivalence classes of grandchildren of~$t$ in~$T$.
    Denote by~$\bar{x}_1,\dots,\bar{x}_k$ these children,
    where the representatives~$R = \{x_1,\dots,x_k\}$ are grandchildren of~$t$.
    Recall that~$L(x_i) \subseteq X$ is the set of leaves descendant of~$x_i$.
    The subsets~$L(x_1),\dots,L(x_k)$ are non-interleaved
    for $<$ and~$<'$ because these linear orders are compatible with~$T$.
    Furthermore, they are non-interleaved for~$\prec$ because the~$x_i$ are pairwise distinguished.
    Thus~$R$ is equipped with the three linear orders~$<,<',\prec$.
    In the structured tree~$T'$ which realises~$(X,<',\prec)$,
    $\bar{s}$ is labelled with~$(R,<',\prec)$,
    hence we only need the following to conclude.
    \begin{claim}
        The permutation $(R,<',\prec)$ is in~$\Cc^2$.
    \end{claim}
    \begin{claimproof}
        We decompose the permutation~$(R,<',\prec)$ into~$(R,<',<)$ and~$(R,<,\prec)$.
        This may at first seem counterproductive, as we go back to the initial permutation~$(<,\prec)$,
        but the point is that we now only consider a~small subset~$R$, rather than all~$X$.

        We already know that~$(X,<,<')$ is in~$\Cc$,
        hence so is~$(R,<',<)$, which is a~pattern of its inverse.
        Furthermore, the~$x_i$ are pairwise distinguished grandchildren of~$t$,
        hence the linear orders~$\prec$ and~$\prec_t$ coincide on~$R$ by \cref{lem:indist-realisation}.
        Thus $(R,<,\prec)$ is a~pattern of~$(<,\prec_t)$, which is in~$\Cc$.
    \end{claimproof}
    As $(X,<,\prec)$ can be factorised into $(X,<,<') \in \Cc$ and $(X,<',\prec) \in \Cc^2$, it is in $\Cc^3$.
\end{proof}

\subsection{Constructing delayed structured trees}
Finally, we show how to express any permutation as a~delayed substitution.
\begin{lemma}
    \label{lem:delayed-construction}
    Let~$\sigma = (X,<,\prec)$ be a~biorder.
    There is a~structured tree~$(T,<,\{\prec_t\}_{t \in T})$ whose realisation is~$\sigma$,
    that satisfies the following.
    \begin{enumerate}
        \item \label{cond:order-representatives}
        For any node~$t \in T$, the linear order~$\prec_t$ on its grandchildren
        is obtained by a~choice of representatives.
        That is, if~$A$ is the set of grandchildren of~$t$,
        then there is a~mapping $f : a~\to X$, with~$f(a)$ descendant of~$a$,
        such that for~$a,b \in A$, $a \prec_t b$ if and only if $f(a) \prec f(b)$.
        \item \label{cond:siblings-split}
        If~$x,y$ are consecutive siblings along~$<$,
        then~$x$ and~$y$ are distinguished for the linear order~$\prec$,
        except possibly if~$x$ and~$y$ do not have any other sibling.
    \end{enumerate}
\end{lemma}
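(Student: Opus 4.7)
I build~$T$ recursively along the lines of the informal construction from the overview. After normalising so that~$\prec$ is the natural order on~$[n]$, start with the root labelled by the $<$-ordered list $\langle a_1, \dots, a_n \rangle$, and process each list $L = \langle a_i, \dots, a_j \rangle$ by one of three rules: a singleton list receives a unique leaf child labelled~$a_i$ (so leaves are single children by construction); if the underlying set of~$L$ is a non-singleton $\prec$-interval of~$[n]$ (\emph{Case~A}), split~$L$ arbitrarily into two consecutive non-empty $<$-sublists; otherwise the set decomposes into $\prec$-intervals $I_1, \dots, I_s$ with~$s \geq 2$ (\emph{Case~B}), and~$L$ is split into the maximal $<$-sublists each contained in a single~$I_\ell$. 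For each internal node~$t$, define~$\prec_t$ on the grandchildren of~$t$ by choosing, for each grandchild~$g$, an arbitrary leaf-descendant~$f(g) \in L(g)$ as representative and ordering grandchildren by~$\prec$ on their representatives. Condition~\ref{cond:order-representatives} then holds by construction.

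The central claim to establish is that for every internal node~$t$ and every pair of grandchildren~$g_1, g_2$ of~$t$ that are \emph{cousins} (i.e.\ with distinct parents among the children of~$t$), the leaf sets~$L(g_1)$ and~$L(g_2)$ are $\prec$-comparable: one lies entirely~$\prec$-before the other. Granted this, for any leaves $p \in L(g_1)$ and $q \in L(g_2)$ the closest common ancestor is~$t$, and $p \prec q \iff L(g_1) \prec L(g_2) \iff f(g_1) \prec f(g_2) \iff g_1 \prec_t g_2$, so the realisation is a well-defined linear order equal to~$\prec$, the tree is well-formed, and it realises~$\sigma$.

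The comparability claim is proved by case analysis on the splits at~$t$ and at the parents of~$g_1, g_2$. In Case~B at~$t$, the children of~$t$ have leaf sets contained in distinct maximal~$\prec$-intervals of~$L(t)$, which are themselves $\prec$-intervals of~$[n]$, and the leaf sets of any two cousins inherit comparability. In Case~A at~$t$ with children~$u, u'$, the key structural observation is that if~$u$ is itself a Case~A node then $L(u)$ is a $\prec$-interval~$\{c, \dots, d\}$ and $L(u') = L(t) \setminus L(u)$ decomposes into at most two $\prec$-intervals flanking~$L(u)$, whereas if~$u$ is a Case~B node then every grandchild from~$u$ has its leaf set inside a single~$\prec$-interval of~$L(u)$, hence in a $\prec$-interval of~$[n]$ disjoint from~$L(u')$. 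Combining these observations over the four sub-cases for the pair~$(u,u')$ always places $L(g_1)$ and $L(g_2)$ in disjoint $\prec$-intervals of~$[n]$, yielding comparability. For condition~\ref{cond:siblings-split}, a Case~A split produces exactly two children, covered by the stated exception; in a Case~B split, two consecutive siblings~$x, y$ along~$<_t$ satisfy $L(x) \subseteq I_\ell$ and $L(y) \subseteq I_{\ell'}$ for distinct maximal $\prec$-intervals of~$L(t)$, and any integer~$v$ strictly~$\prec$-between $I_\ell$ and $I_{\ell'}$ lies outside~$L(t)$ by maximality of these intervals, hence distinguishes~$x$ from~$y$.

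The main obstacle I anticipate is the Case~A sub-case of the comparability argument: when~$t$ is a Case~A split, its two children may interleave arbitrarily in~$\prec$, so $\prec$-comparability of cousins only becomes visible one level deeper in the tree. Handling this cleanly rests on the structural fact that a $\prec$-interval of~$[n]$ minus a sub-$\prec$-interval is a union of at most two $\prec$-intervals, which forces the grandchildren from the ``other side'' to land in precisely controlled positions disjoint from~$L(u)$.
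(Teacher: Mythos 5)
Your construction is the same as the paper's (arbitrary binary split on $\prec$-intervals, greedy split into maximal local modules otherwise, with $\prec_t$ defined by representatives), and your overall strategy of proving comparability of cousin leaf sets is the right way to establish well-formedness. The Case~A analysis and the Condition~\ref{cond:siblings-split} argument are sound (except that your parenthetical ``any integer $v$ strictly $\prec$-between $I_\ell$ and $I_{\ell'}$ lies outside $L(t)$'' is too strong when a third maximal $\prec$-interval of $L(t)$ sits between them; you only need the element $\prec$-adjacent to $I_\ell$, which \emph{is} outside $L(t)$ by maximality).

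The genuine gap is in your Case~B comparability argument. You assert that ``the children of $t$ have leaf sets contained in \emph{distinct} maximal $\prec$-intervals of $L(t)$,'' but this is false: only \emph{consecutive} siblings land in distinct maximal $\prec$-intervals. Non-consecutive siblings can share an $I_\ell$. For instance with $X=[10]$, $\prec$ the natural order, $L(t)=\{1,2,3,4,7,8,9,10\}$, and the $<$-order on $L(t)$ being $1,2,7,8,3,4,9,10$, the greedy partition yields $A_1=\{1,2\}$, $A_2=\{7,8\}$, $A_3=\{3,4\}$, $A_4=\{9,10\}$, and both $A_1$ and $A_3$ lie in the same maximal $\prec$-interval $\{1,2,3,4\}$. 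So for cousins with parents $A_1$ and $A_3$, your stated argument does not apply, and you would need a further step. The needed step does exist --- your Case~A reasoning actually covers it verbatim, since $A_i$ and $A_j$ are disjoint $<$-intervals and each grandchild's leaf set lies in a $\prec$-interval of $X$ disjoint from the other parent --- but your plan doesn't notice that Case~B requires this, and instead relies on a false structural claim. The paper sidesteps this entirely by explicitly maintaining the invariant ``no $x\in X\setminus L(t)$ splits $L(t')$ for any child $t'$ of $t$'' throughout the construction (which both split rules preserve), and then cousin comparability is an immediate one-line consequence for \emph{every} node type, with no case analysis. That invariant-based formulation is both shorter and avoids the pitfall you ran into.
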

In the previous statement, Condition~\ref{cond:order-representatives} is a~technical requirement
ensuring that the permutations labelling~$T$ are patterns of~$\sigma$,
so that hypotheses on~$\sigma$ can also be applied to the former.
Condition~\ref{cond:siblings-split} is crucial for the induction on almost mixed minors in \cref{sec:decomposition}.
Informally, it ensures that the subpermutations $\{(<,\prec_t)\}_{t \in T}$ labelling~$T$
are strictly simpler---in the sense of almost mixed minors---than the global permutation~$(X,<,\prec)$.
\begin{proof}[Proof of \cref{lem:delayed-construction}]
    We construct the tree~$T$ inductively starting from the root,
    choosing for each internal node~$t$ the interval (for~$<$)~$L(t) \subseteq X$
    that, at the end of the construction, will be the set of leaves descendant of~$t$.
    We maintain the condition that whenever~$t'$ is a~child of~$t$ and~$x \in X \setminus L(t)$,
    then~$x$ does not \emph{split}~$L(t')$, i.e.\ either~$x \prec L(t')$ or~$x \succ L(t')$.

    Initially, there is only the root~$r$ with~$L(r) = X$.
    If~$t$ and~$L(t)$ have already been constructed,
    then we create the children of~$t$ by the following rules.
    \begin{itemize}
        \item If~$L(t) = \{x\}$ is a~singleton, we add a~leaf~$x$ as the sole child of~$t$,
        and the construction stops there for this branch.
        
        \item If~$L(t)$ has size at least~2, and is an interval of~$(X,\prec)$
        (in addition to being one for $<$),
        then we add two children~$u,v$ to~$t$,
        and split~$L(t)$ arbitrarily into two intervals~$L(u),L(v)$ for~$<$.
        Remark that this case occurs at least for the root~$r$.
        
        If~$x \not\in L(t)$, then~$x$ does not split~$L(t)$,
        hence a~fortiori it splits neither~$L(u)$ nor~$L(v)$, as required.
        On the other hand, this means that~$u$ and~$v$ will be indistinguishable in~$T$,
        which is why condition~\ref{cond:siblings-split}
        is waived for nodes with only two children.
        
        \item Otherwise, enumerate~$L(t)$ as~$x_1 < \dots < x_k$.
        Say that a~subset~$A \subseteq L(t)$ is a~\emph{local module} of~$L(t)$
        if no~$y \not\in L(t)$ splits~$A$ (for~$\prec$).
        We greedily partition~$L(t)$ into local modules~$A_1,\dots,A_l$:
        $A_1$ is~$\{x_1,\dots,x_{i_1}\}$ with~$i_1$ chosen maximal such that~$A_1$ is a~local module,
        then~$A_2$ is~$\{x_{i_1 + 1},$ $\dots,x_{i_2}\}$ with again~$i_2$ maximal, etc.
        
        By construction, no element~$y \not\in L(t)$ can split any~$A_i$.
        Further, $A_i,A_{i+1}$ are distinguished
        (in fact, the last element of~$A_i$ is distinguished from the first element of~$A_{i+1}$),
        as otherwise~$A_i$ could have been extended.
        Also, the number~$l$ of local modules in this partition is at least~2,
        as otherwise~$L(t)$ would be an interval of~$(X,\prec)$ and we would fall in the previous case.
    \end{itemize}

    With this tree constructed,
    consider a~node~$t \in T$, and $u,v$ two grandchildren of~$t$ that are not siblings.
    Then the condition maintained during the construction ensures that
    no~$x \in L(u)$ splits~$L(v)$, and symmetrically no~$y \in L(v)$ splits~$L(u)$.
    It follows that either~$L(u) \prec L(v)$ or~$L(v) \prec L(u)$.
    In short, the order~$\prec$ is well defined between cousins in~$T$.

    Now, define the linear order~$\prec_t$ on the grandchildren through an arbitrary choice of representatives,
    as required in condition~\ref{cond:order-representatives} of the lemma.
    Then for cousins~$u,v$, we have~$u \prec_t v$ if and only if~$u \prec v$.
    In turn, this implies that the realisation of~$(T,<,\{\prec_t\}_{t \in T})$
    is the permutation~$(X,<,\prec)$ from which we started.
    Condition~\ref{cond:order-representatives} is satisfied by definition of~$\prec_t$,
    and condition~\ref{cond:siblings-split} was verified during the construction of~$T$.
\end{proof}

\section{Partitions and mixity}\label{sec:quotients}
In this section, we introduce tools to further decompose a~single level of a~delayed structured tree.
Their purpose is similar to the right module partitions of~\cite[Section~4]{tww-poly-chi-bounded},
but once again working with biorders gives stronger results and simpler proofs.

\subsection{Definitions}\label{sec:split-forest}

Let~$(X,\prec)$ be a~linear order, and~$\Pc$ a~partition of~$X$.
For any subsets~$X_1,X_2 \in \Pc$, we distinguish three cases:
\begin{enumerate}
    \item $X_1,X_2$ do not interleave, i.e.~$X_1 \prec X_2$ or~$X_2 \prec X_1$.
    \item Restricted to $X_1 \cup X_2$, $X_1$ is an interval
    which is said to \emph{split}~$X_2$ in two, written~$X_1 \sqsubset X_2$, or vice versa.
    \item Neither~$X_1$ nor~$X_2$ is an interval in~$X_1 \cup X_2$,
    and we say that~$X_1,X_2$ are \emph{mixed}.
\end{enumerate}
These three cases correspond to zones in the adjacency matrix that are respectively
constant, vertical or horizontal, and mixed.

Assume now that no pair of subsets in~$\Pc$ is mixed---we then say that~$\Pc$ is \emph{non-mixed}.
For any~$X_1 \in \Pc$, let~$\clos{X}_1$ denote the interval closure of~$X_1$,
i.e.\ the interval of~$(X,\prec)$ between the minimum and maximum of~$X_1$.
Then we have~$X_1 \prec X_2$ if and only if $\clos{X}_1 \prec \clos{X}_2$,
and~$X_1 \sqsubset X_2$ if and only if $\clos{X}_1 \subset \clos{X}_2$.
In particular, either~$\clos{X}_1,\clos{X}_2$ are disjoint, or one contains the other.
Thus~$(X,\Pc)$ induces a~laminar family, meaning that subsets in~$\Pc$ are the nodes of a~rooted forest~$F$,
where~$X_1$ is a~descendant of~$X_2$ if and only if $X_1 \sqsubset X_2$.
Furthermore, the order~$\prec$ is defined for any pair of subsets~$X_1,X_2 \in \Pc$
that are not in an~ancestor--descendant relationship.
Thus~$\prec$ gives to~$F$ the structure of an ordered forest:
each connected component of~$F$ is an ordered tree, and furthermore~$\prec$ orders the components.

\subsection{Non-mixed partitions}
We now consider a~biorder~$(X,<,\prec)$, and a~partition~$\Pc$ of~$X$ into intervals for~$<$.
In later proofs, such a~structure will arise from delayed substitutions as follows:
Given a~node~$t$ in a~delayed structured tree, $X$ is the set of grandchildren of~$t$,
the partition~$\Pc$ is the one given by the children of~$t$,~$<$ is the linear order compatible with the tree, while~$\prec$
is the linear order on the grandchildren of~$t$ given as part of the delayed structured tree.
Our goal is to decompose this biorder into simpler permutations.

\begin{lemma}\label{lem:non-mixed-decomp}
    Let $\mathcal{C}$ be a~substitution-closed class of permutations.
    Let~$(X,<,\prec)$ be a~biorder, and~$\Pc$ a~partition of~$X$ into intervals of~$<$.
    Suppose that~$\Pc$ is non-mixed w.r.t.~$\prec$.
    Finally, assume that
    \begin{enumerate}
        \item for each part~$P \in \Pc$, $(P,<,\prec)$ is in~$\Cc$, and
        \item there exists a~\emph{transversal}~$R$ of~$\Pc$
        (i.e.\ a~choice of a~single element in each part of~$\Pc$)
        such that the permutation~$(R,<,\prec)$ is in~$\Cc$.
    \end{enumerate}
    Then~$(X,<,\prec)$ is in~$\Sep^2 \circ \Cc$, where~$\Sep$ denotes the class of separable permutations.
\end{lemma}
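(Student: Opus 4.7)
The plan is to exhibit an intermediate linear order~$\prec_1$ on~$X$ and factor $(X,<,\prec) = (X,\prec_1,\prec) \circ (X,<,\prec_1)$, with the right factor in~$\Cc$ and the left in~$\Sep^2$. Define~$\prec_1$ so that each part $P \in \Pc$ forms a~contiguous block of~$\prec_1$, ordered internally by~$\prec$; the blocks themselves are ordered in~$\prec_1$ according to the~$\prec$-order of their representatives in~$R$. This makes each part an interval of both~$<$ (by hypothesis) and~$\prec_1$ (by construction), while $\prec_1|_R = \prec|_R$.

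For the factor $(X,<,\prec_1) \in \Cc$: the permutation is obtained by substitution along the two-level tree that groups elements by parts. The quotient on representatives is $(R,<,\prec_1|_R) = (R,<,\prec|_R) \in \Cc$ by hypothesis~(2), since the blocks are ordered in~$\prec_1$ by the reps' $\prec$-order; the within-block permutations $(P,<,\prec_1|_P) = (P,<,\prec) \in \Cc$ by hypothesis~(1). Substitution-closure of~$\Cc$ concludes this factor.

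For the factor $(X,\prec_1,\prec) \in \Sep^2$: the convex hulls $\{\clos{P}\}_{P \in \Pc}$ form a~laminar family (Section~\ref{sec:split-forest}), organising~$\Pc$ as a~rooted forest~$F$. Two simplifying observations: the restriction of $(X,\prec_1,\prec)$ to any part is the identity (as $\prec_1|_P = \prec|_P$), and the quotient on representatives is the identity as well (as $\prec_1|_R = \prec|_R$); all non-trivial behaviour arises from interactions of a~part with its ancestors in~$F$. I show $(X,\prec_1,\prec)$ is in fact separable, hence in~$\Sep \subseteq \Sep^2$, by building a~separating tree by induction along~$F$. The roots of~$F$ partition~$X$ into clusters that are intervals of both~$\prec_1$ (as consecutive groups of blocks) and~$\prec$ (as convex hulls), in the same order in both, yielding a~top-level direct sum. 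Within a~single-root cluster with root~$P$ whose $\prec$-sorted elements are $p_1 \prec \dots \prec p_k$ and representative $r_P = p_j$, splitting at position~$|L|+j$ in both orders (where~$L$ gathers the descendants whose representatives precede~$r_P$ in~$\prec$) separates the cluster into two halves sharing no values, giving a~direct sum of two strictly smaller instances of the same structure. Boundary cases ($j = 1$ or $j = k$) are handled by first peeling~$p_1$ or~$p_k$ as a~singleton endpoint with a~direct (or skew) sum before recursing.

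The main obstacle will be handling these boundary cases carefully: the reduced sub-problems inherit their~$\prec_1$ from the global construction rather than a~``rebuilt'' one, so the inherited block order may permute the sub-problem's clusters compared to their $\prec$-order, requiring an~alternation of direct and skew sums at the top of some subtrees in order to assemble the separating tree. Should the direct separability argument prove troublesome in those corners, a~fallback is to exhibit $(X,\prec_1,\prec)$ as a~$2$-shuffle with respect to a~suitable colouring derived from the laminar depth, and invoke Lemma~\ref{lem:shuffle-decomp} with $k=1$ to obtain membership in~$\Sep^2$.
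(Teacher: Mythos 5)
Your choice of the intermediate order $\prec_1$ is exactly the paper's $<'$, and your argument that $(X,<,\prec_1) \in \Cc$ via a depth-two substitution is correct and matches the paper. The gap is in the second factor: $(X,\prec_1,\prec)$ is \emph{not} separable in general, and it is also \emph{not} a global $2$-shuffle, so both your main route and your fallback fail. Here is a counterexample to separability. Take $X = \{1,\dots,6\}$ with $<$ the natural order, $\Pc = \{\{1,2,3\},\{4\},\{5\},\{6\}\}$, and $\prec$ given by the order $1 \prec 4 \prec 2 \prec 5 \prec 3 \prec 6$; one checks that $\Pc$ is non-mixed (the $\prec$-closure of $\{1,2,3\}$ is $\{1,4,2,5,3\}$, and the singleton closures are nested inside it or disjoint). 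Taking $R = \{3,4,5,6\}$ gives $\prec_1\colon 4,5,1,2,3,6$, and $(X,\prec_1,\prec)$ is the permutation $241356$, which contains $2413$. The root cause is the shape of the constraint at a node $P$ of the forest $F$: the elements of $P$ form a contiguous block in $\prec_1$, pinned only by $\prec_1|_R = \prec|_R$ through the single representative $r_P$, while the remaining elements of $P$ may interleave in $\prec$ with the child parts on either side of $r_P$; this produces genuine $2413$ patterns. Your fallback also breaks: on $X = \{1,\dots,5\}$, $\Pc = \{\{1,2\},\{3,4\},\{5\}\}$, $\prec\colon 1,3,5,4,2$, $R = \{2,3,5\}$, one gets $(X,\prec_1,\prec) = 24315$, which contains a decreasing subsequence $4,3,1$ of length $3$ and hence is not any $2$-shuffle, ruling out a single global $2$-colouring.

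What actually works (and is the paper's argument) is one structural step stronger than either of your two routes: $(X,\prec_1,\prec)$ is a \emph{substitution of $2$-shuffles}. Building a tree $T$ from $F$ (with a fresh root over the roots of $F$, and with each $x \in P$ added as a new leaf child of $P$), both $\prec_1$ and $\prec$ are compatible with $T$, and at each internal node the induced permutation on its children is a $2$-shuffle of two increasing sequences, one per category (leaves of $P$ versus child parts of $P$). Each such local permutation is in $\Sep^2$ by \cref{lem:shuffle-decomp}, and $\Sep^2$ is substitution-closed by \cref{lem:product-substitution}, so $(X,\prec_1,\prec) \in \Sep^2$. Since $2$-shuffles are not all separable ($2413$ is one), you cannot collapse this to $\Sep$; and since substitutions of $2$-shuffles are not all $2$-shuffles, you cannot flatten it to a global colouring either. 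The node-by-node version is essential.
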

\begin{proof}
    Let us define an intermediate linear order~$<'$ on~$X$ as follows:
    \begin{itemize}
        \item each part~$P \in \Pc$ is an interval of~$(X,<')$,
        \item inside each part~$P$, the linear order~$<'$ coincides with~$\prec$, and
        \item between parts~$P_1,P_2 \in \Pc$, the order is given by the representatives in~$R$,
        i.e.\ if~$x_i$ is the sole element in~$P_i \cap R$, then~$P_1 <' P_2$ if and only if~$x_1 \prec x_2$.
    \end{itemize}
    \begin{claim}
        The permutation~$(X,<,<')$ is in~$\Cc$.
    \end{claim}
    \begin{claimproof}
        The permutation inside a~given part~$P \in \Pc$ is $(P,<,\prec)$, which is known to be in~$\Cc$.
        Furthermore, the permutation on the set~$\Pc$ is isomorphic to~$(R,<,\prec)$, which is also in~$\Cc$.
        Therefore $(X,<,<')$ can be expressed as a~substitution from permutations in~$\Cc$,
        with a~structured tree of depth~2.
    \end{claimproof}
    We now focus on~$(X,<',\prec)$,
    and will prove that this permutation is obtained by substitution from 2-shuffles.
    Such permutations are in~$\Sep^2$ by \cref{lem:shuffle-decomp} and since~$\Sep$ is closed under substitution.

    Recall the structure on~$(X,\prec)$ and~$\Pc$ described in \cref{sec:split-forest}:
    Since the partition~$\Pc$ is non-mixed w.r.t.~$\prec$,
    the \emph{splitting} partial order~$\sqsubset$ (related to $\prec$) gives~$\Pc$ the structure of a~rooted forest~$F$,
    which furthermore is compatible with the linear order~$\prec$.
    We construct a~tree~$T$ from~$F$ by
    \begin{itemize}
        \item adding a~new root, whose children are the roots of the connected components of~$F$, and
        \item for each~$P \in \Pc$ node of~$F$, adding each element~$x \in P$ as a~new child of~$P$.
    \end{itemize}
    The leaves of~$T$ are exactly the elements of~$X$, and we will describe $(X,<',\prec)$ as a~substitution along~$T$. See \cref{fig:split-tree} for an illustration.
    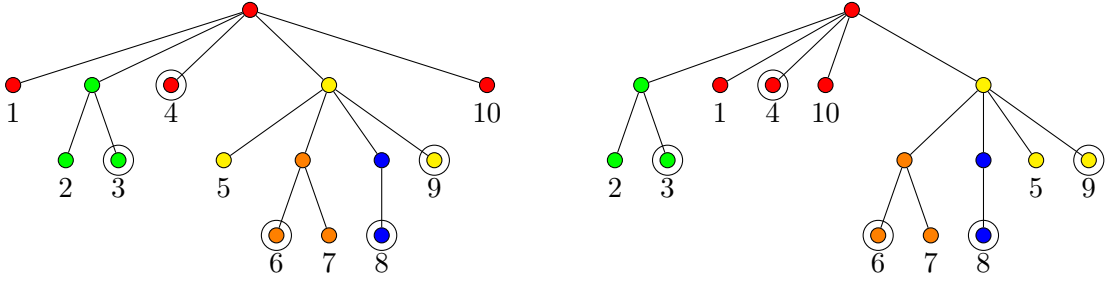
\begin{figure}
        \centering
        \begin{tikzpicture}
            \foreach \n/\x/\z/\y/\p/\c/\l in {
                x0/0/0/4/x0/red/,y0/-4.5/-2.5/3/x0/red/1,y3/-1.5/-1.5/3/x0/red/4,y9/4.5/-0.5/3/x0/red/10,
                x1/-3/-4/3/x0/green/,y1/-3.5/-4.5/2/x1/green/2,y2/-2.5/-3.5/2/x1/green/3,
                x2/1.5/2.5/3/x0/yellow/,y4/-0.5/3.5/2/x2/yellow/5,y8/3.5/4.5/2/x2/yellow/9,
                x3/1/1/2/x2/orange/,y5/0.5/0.5/1/x3/orange/6,y6/1.5/1.5/1/x3/orange/7,
                x4/2.5/2.5/2/x2/blue/,y7/2.5/2.5/1/x4/blue/8%
            }{
                \node[vertex,fill=\c,minimum width=2mm] (\n1) at (\x*0.7,\y) [label=below:\l] {};
                \node[vertex,fill=\c,minimum width=2mm] (\n2) at (\z*0.7 + 8,\y) [label=below:\l] {};
                \ifx\n\p\else
                \draw (\n1) -- (\p1);
                \draw (\n2) -- (\p2);
                \fi
            }
            \foreach \n in {y2,y3,y5,y7,y8}{
                \draw (\n1) circle (2mm);
                \draw (\n2) circle (2mm);
            }

        \end{tikzpicture}
        \caption{%
            The tree~$T$, ordered by~$\prec$ (left) and~$<'$ (right) respectively.
            Leaves are elements of~$X$, and their numbering is according to~$\prec$.
            Internal nodes correspond to parts in~$\Pc$, with each colour representing a~part of~$\Pc$.
            The transversal~$R$ is indicated by circled leaves, and decides how~$<'$ orders the parts.
        }
        \label{fig:split-tree}
    \end{figure}

    \begin{claim}
        The linear orders~$<'$ and~$\prec$ are compatible with~$T$.
    \end{claim}
    \begin{claimproof}
        Consider a~node $t$ of~$T$, other than the leaves and the root for which it is straightforward that $L(t)$ is an interval of $<'$ and of $\prec$.
        Thus $t$ corresponds to some part~$P \in \Pc$ (and we now identify $t$ to $P$).
        Then it is simple to check that~$L(P)$, the set of leaves descending from~$P$ in~$T$,
        is exactly~$\bigcup_{P' \sqsubseteq P} P'$,
        i.e.\ the union of parts which are descendants of~$P$ in~$F$.
        We claim that this is an interval for both~$<'$ and~$\prec$.

        Suppose otherwise towards a~contradiction.
        Then there are~$u,w \in L(P)$ and~$v \not\in L(P)$ such that
        either~$u \prec v \prec w$ or~$u <' v <' w$.
        Let~$U,V,W$ be the parts of~$\Pc$ containing~$u,v,w$ respectively.
        We have~$U,W \sqsubseteq P$, and~$V \not\sqsubseteq P$.
        Recall that~$\clos{P}$ denotes the interval between the minimum and maximum of~$P$ for~$\prec$.
        Then we have~$U,W \subseteq \clos{P}$, while~$V$ is disjoint from~$\clos{P}$.
        This implies~$u,w \in \clos{P}$ but~$v \not\in \clos{P}$,
        hence it cannot be that~$u \prec v \prec w$ since~$\clos{P}$ is an interval of~$\prec$.

        For~$<'$, we also need to consider the representative~$v' \in V \cap R$.
        For the same reasons as above, we have either~$v' \prec U \cup W$, or~$U \cup W \prec v'$,
        which implies~$V <' U \cup W$ or~$U \cup W <' V$ respectively.
        Either way, $u <' v <' w$ is impossible.
    \end{claimproof}

    To conclude the proof, we only need to show that for any internal node~$P$ of~$T$,
    the permutation $(X,<',\prec)$ restricted to its children is a~2-shuffle.
    We partition the children of~$P$ in two categories:
    leaves (which are in~$X$), and internal nodes (which are in~$\Pc$).
    We claim that~$(X,<',\prec)$ restricted to either of these categories is the identity.
    \begin{enumerate}
        \item The leaves that are children of~$P$ are exactly the elements of~$P$,
        and inside~$P$ the linear orders~$<'$ and~$\prec$ coincide by construction.
        \item The parts of~$\Pc$ that are children of~$P$ in~$T$ are exactly the children of~$P$ in~$F$.
        We know that they are totally ordered under~$\prec$,
        i.e.\ if~$A,B \in \Pc$ are children of~$P$, then either~$A \prec B$ or~$B \prec A$.
        By construction of~$<'$, if~$A \prec B$ (resp.~$B \prec A$) then~$A <' B$ (resp.~$B <' A$).
        It follows from these two remarks that~$<'$ and~$\prec$ coincide on the children of~$P$ in~$F$.
        \qedhere
    \end{enumerate}
\end{proof}

\subsection{Separating mixed parts}

The previous lemma shows how to decompose a~permutation that is non-mixed w.r.t.\ a~given partition.
We will now generalise it to permutations with few pairs of mixed parts.
Let~$(X,<)$ be a~linear order and~$\Pc$ a~partition of~$X$.
The associated \emph{mixed graph} is the graph with vertex set~$\Pc$,
and where two parts~$A,B \in \Pc$ are adjacent when they are mixed.

\begin{lemma}
    \label{lem:mixed-degenerate}
    Let~$G$ be the mixed graph of a~partition,
    and assume that it is $K_t$-free and $K_{t,t}$-subgraph-free.
    Then~$G$ is~$(4t^2-1)$-degenerate.
\end{lemma}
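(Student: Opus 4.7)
The plan is to construct an orientation of $G$'s edges with bounded maximum out-degree, then conclude via the standard fact that a graph admitting an orientation of max out-degree $d$ is $2d$-degenerate (every subgraph has average degree at most $2d$, hence a vertex of degree at most $2d$).

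For every $A \in \Pc$, let $\clos{A}$ denote its interval closure in $(X,<)$, namely the interval $[\min A, \max A]$. For two distinct parts $A, B \in \Pc$, the closures $\clos{A}, \clos{B}$ are either disjoint, properly overlapping (crossing), or strictly nested. First I would verify the following case analysis: disjoint closures imply that $A, B$ do not interleave in $(X,<)$, so they are not mixed; crossing closures force $A, B$ to be mixed, because the minimum (respectively maximum) of one part lies strictly inside the other's closure; and if $\clos{A} \subsetneq \clos{B}$, then $B$ automatically fails to be an interval of $A \cup B$ (since $A \subseteq \clos{B} \setminus B$ is non-empty), so $A, B$ are mixed iff $A$ fails to be an interval, iff $B \cap \clos{A} \neq \emptyset$. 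Every edge of $G$ is thus either a \emph{crossing edge} or a \emph{nesting edge} of this latter type.

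The crossing edges of $G$ coincide with the overlap graph of $\{\clos{A}\}_{A \in \Pc}$, which is a circle graph; as a subgraph of $G$ it inherits being $K_t$-free and $K_{t,t}$-subgraph-free, so by \cref{lem:circle-graph-degenerate} it admits an orientation with out-degree at most $2(t-1)^2$. For the nesting edges, I orient each from the part with smaller closure to the one with larger closure, and then show that the out-neighbourhood $N^+(A) = \{B_1, \dots, B_m\}$ of any $A$, together with $A$, forms a clique in $G$. For each $i$, fix $b_i \in B_i \cap \clos{A}$. For $i \neq j$, the closures $\clos{B_i}$ and $\clos{B_j}$ both contain $\clos{A}$ and thus cannot be disjoint: either they cross, which directly makes $B_i, B_j$ mixed, or they nest, say $\clos{B_i} \subset \clos{B_j}$, and then $b_j \in \clos{A} \subseteq \clos{B_i}$ gives $B_j \cap \clos{B_i} \neq \emptyset$, again forcing $B_i, B_j$ to be mixed. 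The $K_t$-free hypothesis then gives $m + 1 \le t - 1$, so the nesting orientation has out-degree at most $t - 2$.

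Summing the two orientations, $G$ admits one with max out-degree at most $2(t-1)^2 + (t-2)$, hence is $2[2(t-1)^2 + (t-2)]$-degenerate, and for $t \ge 1$ this quantity is bounded by $4t^2 - 1$. I expect the clique argument for the nesting orientation—verifying that pairs of out-neighbours are mixed in both possible configurations of their closures—to be the subtle part; the case analysis for mixed pairs and the invocation of \cref{lem:circle-graph-degenerate} are straightforward.
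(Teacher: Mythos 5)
Your proof is correct and follows essentially the same approach as the paper: split the mixed-graph edges into overlap (crossing) edges handled by \cref{lem:circle-graph-degenerate} and nesting edges oriented from smaller to larger closure, then argue that the out-neighbourhood of a vertex under the nesting orientation spans a clique to exploit $K_t$-freeness. The only cosmetic difference is that you work directly with orientation out-degree and the $2d$-degeneracy consequence, whereas the paper phrases the same bounds through maximum edge density; the arithmetic also differs slightly (you track $t-2$ where the paper rounds up to $t-1$) but both land within the stated $4t^2-1$.
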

\begin{proof}
    Let~$G = (\Pc,E)$ be the mixed graph of a~partition~$\Pc$ of a~linear order~$(X,<)$.
    We partition the edges of~$G$ into~$E = E_1 \uplus E_2$ as follows:
    \begin{enumerate}
        \item An edge~$AB \in E$ is in~$E_1$ if the interval closures satisfy
        $\clos{A} \subset \clos{B}$ or~$\clos{B} \subset \clos{A}$.
        \item The remaining edges are in~$E_2$.
        Thus, a~pair~$(A,B) \in \Pc$ is an edge in~$E_2$ if and only if~$\clos{A}$ and~$\clos{B}$ overlap.
    \end{enumerate}
    The graph~$(\Pc,E_2)$ is precisely the overlap graph of the interval closures of parts of~$\Pc$.
    Thus by \cref{lem:circle-graph-degenerate}, its maximum edge density is at most~$2(t-1)^2$.
    We will show that~$(\Pc,E_1)$ is $(t-1)$-degenerate, hence has maximum edge density at most~$t-1$.
    Then, their union~$(\Pc,E)$ has maximum edge density at most~$t-1 + 2(t-1)^2 \le 2t^2-1$,
    hence it is~$(4t^2 - 1$)-degenerate as desired.

    To this end, we define an acyclic orientation of~$(\Pc,E_1)$ for which the out-degree is at most~$t-1$.
    If~$AB$ is an edge in~$E_1$ and~$\clos{A} \subset \clos{B}$, we orient it from~$A$ to~$B$.
    We claim that for any part~$A \in \Pc$, the out-degree of~$A$ in~$(\Pc,E_1)$ is at most~$t-1$.
    Indeed, let~$B_1,\dots,B_k$ be the out-neighbours of~$A$,
    so that each~$B_i$ is mixed with~$A$, and~$\clos{A} \subset \clos{B}_i$ (see \cref{fig:mixed-intervals}).
    We will show that~$A,B_1,\dots,B_k$ is a~clique in~$(\Pc,E)$,
    which implies the claim as this graph has no clique of size~$t$.
    To this end, we need to show that any two~$B_i \neq B_j$ are mixed.
    Without loss of generality, assume that~$\clos{B}_i \not\subset \clos{B}_j$,
    hence there is some~$x \in B_i$ outside of~$\clos{B}_j$, say~$x < B_j$.
    On the other hand, since~$B_i$ is mixed with~$A$,
    there must be~$y \in B_i \cap \clos{A}$, and thus $y \in B_i \cap \clos{B}_j$.
    Thus we have two points in~$B_i$, one inside the interval closure of~$B_j$ and one outside.
    This implies that~$B_i$ and~$B_j$ are mixed.
    \begin{figure}
        \centering
        \begin{tikzpicture}
            \def\xs{0.7}
            \def\ys{0.4}
            \foreach \a/\b/\y/\c in {
                8/13/1/black,7/18/2/gray,1/15/3/gray,2/19/4/gray,3/16/0/gray,0/11/-1/gray%
            }{
                \draw[very thick,\c] (\a*\xs,\y*\ys) -- (\b*\xs,\y*\ys);
                \node[vertex] at (\a*\xs,\y*\ys) {};
                \node[vertex] at (\b*\xs,\y*\ys) {};
            }

            \foreach \x/\y in {
                4/4,5/0,6/-1,9/4,10/2,12/3,14/0,17/4%
            }{
                \node[vertex] at (\x*\xs,\y*\ys) {};
            }

            \foreach \i/\n in {1/$A$,2/$B_1$,3/$B_2$,4/$B_3$,0/$C$,-1/$D$}{
                \node at (-1,\i*\ys) {\n};
            }

            \foreach \i/\x in {1/9,2/12,3/10}{
                \node (y\i) at (\x*\xs,5*\ys) {$y_\i$};
                \draw[dashed] (y\i) -- +(0,-4*\ys);
            }
        \end{tikzpicture}
        \caption{
            Representation of six parts of~$\Pc$.
            Each row is a~part~$P$, the dots being elements of~$P$, and the line being the interval~$\clos{P}$.
            The left-to-right order is~$\prec$.
            The~$B_i$s are out-neighbours of~$A$ in the graph~$(\Pc,E_1)$,
            and~$y_i \in B_i \cap \clos{A}$ witnesses that~$A$ and~$B_i$ are mixed.
            For any~$i \neq j$, $y_i,y_j$ together with the endpoints of~$\clos{B_i},\clos{B_j}$
            ensure that~$B_i$ and~$B_j$ are mixed.
            The part~$C$ satisfies~$\clos{A} \subset \clos{C}$, but is not mixed with~$A$ as~$C \cap \clos{A} = \emptyset$.
            The part~$D$ is mixed with~$A$ because~$\clos{A}$ and~$\clos{D}$ overlap, hence~$AD \in E_2$.
        }
        \label{fig:mixed-intervals}
    \end{figure}
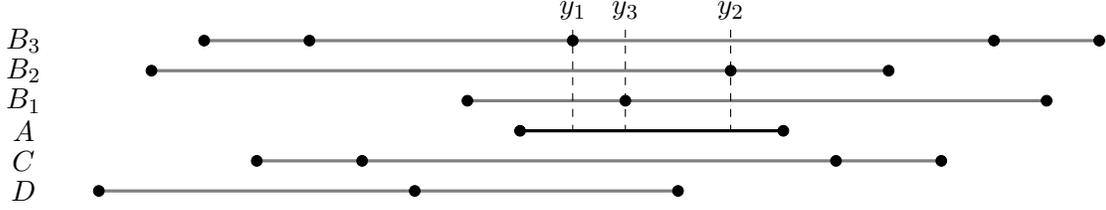
\end{proof}

Combining this with \cref{lem:non-mixed-decomp}, we obtain the following, where (as throughout the paper) the $\log$ function is in base~2. 
\begin{lemma}
    \label{lem:grandchildren-decomp}
    Let~$\Cc$ be a~class of permutations closed under substitution, symmetry, and taking patterns.
    Let~$(X,<,\prec)$ be a~biorder, and~$\Pc$ a~partition of~$X$ into intervals of~$<$,
    satisfying the following:
    \begin{enumerate}
        \item The mixed graph of~$\Pc$ for the linear order~$(X,\prec)$
        does not contain~$K_t$ or~$K_{t,t}$ subgraphs.
        \item For each part~$P \in \Pc$, the permutation~$(P,<,\prec)$ is in~$\Cc$.
        \item There exists a~transversal~$R$ of~$\Pc$
        (i.e.\ a~choice of a~single element in each part of~$\Pc$)
        such that the permutation~$(R,<,\prec)$ is in~$\Cc$.
    \end{enumerate}
    Then the permutation~$(X,<,\prec)$ is in $\Sep^{k+2} \circ \Cc \circ \Sep^k$,
    with~$k = \ceil{2\log t} + 2$.
\end{lemma}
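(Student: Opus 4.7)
The plan is to combine the degeneracy bound from \cref{lem:mixed-degenerate} with the non-mixed decomposition from \cref{lem:non-mixed-decomp} via a greedy colouring and the shuffle machinery of \cref{lem:shuffle-decomp}.

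First, I would apply \cref{lem:mixed-degenerate} to conclude that the mixed graph of~$\Pc$ is $(4t^2-1)$-degenerate. Then \cref{lem:degen-colouring} yields a proper colouring with at most~$4t^2$ colours, which (since $4t^2 \le 2^{\lceil 2\log t \rceil + 2} = 2^k$) we can pad with empty colour classes into a partition $\Pc = \Pc_1 \uplus \dots \uplus \Pc_{2^k}$ such that each $\Pc_i$ is non-mixed w.r.t.~$\prec$.

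Next, for each~$i$, let $X_i = \bigcup_{P \in \Pc_i} P$. The permutation~$(X_i, <, \prec)$, together with the partition~$\Pc_i$, satisfies the hypotheses of \cref{lem:non-mixed-decomp}: $\Pc_i$ is non-mixed; each $(P,<,\prec) \in \Cc$ by assumption; and $R_i := R \cap X_i$ is a transversal of~$\Pc_i$ whose permutation~$(R_i,<,\prec)$ is a pattern of $(R,<,\prec) \in \Cc$, hence lies in~$\Cc$ since the latter is closed under taking patterns. Thus $(X_i,<,\prec) \in \Sep^2 \circ \Cc$ for each~$i$.

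By construction, $(X,<,\prec)$ is a~$2^k$-shuffle of the class $\Sep^2 \circ \Cc$, witnessed by the partition $X = X_1 \uplus \dots \uplus X_{2^k}$. To invoke \cref{lem:shuffle-decomp} with this class in the role of~$\Cc$, I must check it is closed under substitution and symmetry. Closure under substitution follows from \cref{lem:product-substitution} applied to the substitution-closed classes $\Sep$ and $\Cc$. Closure under symmetry follows by conjugating $s_1 \circ s_2 \circ c$ (with $s_1,s_2 \in \Sep$ and $c \in \Cc$) by the decreasing permutation, which distributes over the composition and leaves each factor in its class since $\Sep$ and $\Cc$ are both closed under symmetry. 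Then \cref{lem:shuffle-decomp} gives
\[
    (X,<,\prec) \in \Sep^k \circ \left(\Sep^2 \circ \Cc\right) \circ \Sep^k = \Sep^{k+2} \circ \Cc \circ \Sep^k,
\]
as required. The only delicate point, and the one I would check most carefully, is the verification that $\Sep^2 \circ \Cc$ inherits the closure properties needed to feed it into \cref{lem:shuffle-decomp}; the rest of the argument is a straightforward chaining of the previously established lemmas.
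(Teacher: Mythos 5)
Your proof is correct and follows the paper's argument exactly: colour the mixed graph using \cref{lem:mixed-degenerate,lem:degen-colouring}, apply \cref{lem:non-mixed-decomp} inside each colour class, and finish by invoking \cref{lem:shuffle-decomp} on the resulting $4t^2$-shuffle of $\Sep^2 \circ \Cc$. The paper leaves the closure checks on $\Sep^2 \circ \Cc$ implicit (they follow from \cref{lem:product-substitution} and the remark that composition preserves closure under symmetry), whereas you spell them out, but the substance is identical.
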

\begin{proof}
    Using \cref{lem:mixed-degenerate,lem:degen-colouring}, the mixed graph of~$\Pc$ is $4t^2$-colourable.
    Fix a~proper $4t^2$-colouring,
    and for any fixed colour~$c \in [4t^2]$, let~$\Pc_c$ be the set of parts of colour~$c$,
    and~$X_c = \bigcup_{P \in \Pc_c} P$ the points contained therein.
    Since the colouring is proper, no two parts of~$\Pc_c$ are mixed.
    Thus the restricted biorder~$(X_c,<,\prec)$ with the partition~$\Pc_c$
    satisfies the conditions of \cref{lem:non-mixed-decomp},
    and the permutation~$(X_c,<,\prec)$ is in~$\Sep^2 \circ \Cc$.
    Hence, $(X,<,\prec)$ is a~$4t^2$-shuffle of permutations in~$\Sep^2 \circ \Cc$.
    We conclude by applying \cref{lem:shuffle-decomp}
    with $k = \ceil{\log(4t^2)} = \ceil{2\log t} + 2$.
\end{proof}

\section{Reducing the size of almost mixed minors}\label{sec:decomposition}
Given a~biorder~$\sigma = (X,<,\prec)$ define its \emph{adjacency matrix}~$M_\sigma$
as the 0,1-matrix whose rows and columns are~$X$ ordered by~$<$,
and with a~1 in position~$(x,y)$ if and only if~$x \prec y$, 
i.e. if and only if~$\sigma(x) < \sigma(y)$.
Equivalently, $M_\sigma$ is obtained by applying~$\sigma^{-1}$
to both the rows and the columns of the full upper triangular matrix.
This should not be confused with the more usual sparse permutation matrix,
with a~1 in positions of the form~$(x,\sigma(x))$.
We say that~$\sigma$ is $k$-almost mixed free as shorthand for~$M_\sigma$ being $k$-almost mixed free.

Using the tools from the previous sections,
we will show how to factorize $k$-almost mixed free permutations, by induction on~$k$.
The base case is the following simple remark.
\begin{lemma}
  \label{lem:2almost-mixed-free}
    Any 2-almost mixed free permutation is separable.
\end{lemma}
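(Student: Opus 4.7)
The plan is to argue the contrapositive: any non-separable permutation admits a 2-almost mixed minor. Since separable permutations are exactly those avoiding both $2413$ and $3142$ as patterns, it is enough to show that each of these two length-4 patterns, viewed as a $4\times 4$ adjacency matrix $M_\pi$, already contains a 2-almost mixed minor. Then if $\sigma$ contains a copy of such a pattern $\pi$ at indices $i_1 < i_2 < i_3 < i_4$, splitting the rows and columns of $M_\sigma$ at the position strictly between $i_2$ and $i_3$ yields a valid 2-division of the whole adjacency matrix. The off-diagonal cells of this 2-division contain, as submatrices, the corresponding off-diagonal $2\times 2$ blocks coming from the pattern; and having a mixed $2\times 2$ submatrix is enough to make the enclosing cell mixed, since two distinct rows (respectively columns) inside a block are also two distinct rows (respectively columns) of the whole cell.

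To verify the base case, I would write out $M_{2413}$ and $M_{3142}$ explicitly using the rule that the entry $(i,j)$ equals $1$ iff $i \prec j$ (equivalently, iff $\sigma(i) < \sigma(j)$), and apply the balanced 2-division that cuts both rows and columns between positions 2 and 3. For $2413$ (where $\prec$ reads $3 \prec 1 \prec 4 \prec 2$) the upper-right $2\times 2$ block is $\bigl(\begin{smallmatrix} 0 & 1 \\ 0 & 0 \end{smallmatrix}\bigr)$ and the lower-left one is $\bigl(\begin{smallmatrix} 1 & 1 \\ 0 & 1 \end{smallmatrix}\bigr)$; for $3142$ ($2 \prec 4 \prec 1 \prec 3$) the analogous blocks are $\bigl(\begin{smallmatrix} 1 & 0 \\ 1 & 1 \end{smallmatrix}\bigr)$ and $\bigl(\begin{smallmatrix} 0 & 0 \\ 1 & 0 \end{smallmatrix}\bigr)$. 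A direct inspection shows that in each of these four blocks the two rows are distinct and the two columns are distinct, so each is mixed. This produces a 2-almost mixed minor (in fact a 2-mixed minor, since here the diagonal cells happen to be mixed too, but that is not needed).

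The only real ``obstacle'' is the small case split between the two forbidden patterns, which amounts to a 16-entry check per pattern and is entirely routine. Nothing else is required: we never need to manipulate the diagonal cells of the 2-division, which is precisely why working with \emph{almost} mixed minors rather than plain mixed minors is convenient here.
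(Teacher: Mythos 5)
Your proof is correct and takes essentially the same approach as the paper: both argue the contrapositive by observing that a non-separable permutation contains $2413$ or $3142$, and that the $4\times4$ adjacency matrices of these two patterns each admit a $2$-mixed (hence $2$-almost mixed) minor under the balanced division, which lifts to a $2$-almost mixed minor of $M_\sigma$. Your explicit block computations and the lifting argument (a mixed submatrix forces the enclosing cell to be mixed) are both correct; the paper presents the same matrices in a figure and leaves the lifting implicit.
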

\begin{proof}
    A~permutation that is not separable contains either~$3142$ or~$2413$ as pattern,
    both containing 2-almost mixed minors in their adjacency matrices
    (they in fact also contain 2-mixed minors).
    See \cref{fig:mixed-nonsep} for the adjacency matrix of~$3142$, the matrix of~$2413$ is its transpose.
    \begin{figure}[ht]
      \begin{center}
        \begin{tikzpicture}[scale=0.4]
          \foreach \i/\si in {1/3,2/1,3/4,4/2}{
            \node(r\i) at (-0.6,-\i) {\si};
            \node(c\i) at (\i,0.6) {\si};

            \foreach \j/\sj in {1/3,2/1,3/4,4/2}{
              \ifnum \sj<\si
                \node(\i\j) at (\i,-\j) {1};
              \else
                \node(\i\j) at (\i,-\j) {0};
              \fi
            }
          }
          \draw (0.5,-0.5) -- (4.5,-0.5) -- (4.5,-4.5) -- (0.5,-4.5) -- (0.5,-0.5);
          \draw[dashed] (2.5,-0.5) -- (2.5,-4.5);
          \draw[dashed] (0.5,-2.5) -- (4.5,-2.5);
        \end{tikzpicture}
      \end{center}
      \caption{%
        Adjacency matrix of the permutation~$\sigma = 3142$,
        which contains a~2-mixed minor represented by dashed lines.
        The matrix has a~1 at the intersection of the $i$th row and $j$th column
        if and only if~$\sigma(i) < \sigma(j)$.
      }
      \label{fig:mixed-nonsep}
    \end{figure}
\end{proof}

Let~$\Ac_k$ denote the class of $k$-almost mixed free permutations,
and recall that~$\Sep$ is the class of separable permutations.
\begin{theorem}
    \label{thm:main-induction}
    If $\Ac_{k-1} \subseteq \Sep^r$ for some~$r \in \Nn$, then $\Ac_k \subseteq \Sep^{s}$ for
    \[ s = 3r + 12\ceil{\log k} + 28. \]
\end{theorem}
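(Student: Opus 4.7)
The plan is an induction on $k$, the base case $k=2$ being handled by \cref{lem:2almost-mixed-free}. Assuming the inductive hypothesis $\Ac_{k-1} \subseteq \Sep^r$, take $\sigma = (X,<,\prec) \in \Ac_k$. I would first apply \cref{lem:delayed-construction} to realize $\sigma$ as a well-formed delayed structured tree $(T,<,\{\prec_t\}_{t\in T})$ enjoying both conditions of that lemma: each $\prec_t$ is induced by a choice of representatives, and consecutive siblings along $<$ are distinguished for $\prec$ by an element outside their grandparent's subtree (except possibly when they are the only two children of their parent). The strategy is then to invoke \cref{lem:delayed-decomp} with a class $\Cc = \Sep^n$ to be determined, yielding $\sigma \in \Cc^3 = \Sep^{3n}$; the required closure of $\Sep^n$ under substitution, patterns, and inverse follows from \cref{lem:product-substitution} together with the analogous closures of $\Sep$ itself. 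Hence everything reduces to uniformly bounding the separable index of each labelling permutation $(<_t,\prec_t)$.

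For a fixed internal node $t$, consider the biorder $(<_t,\prec_t)$ on the grandchildren of $t$, together with the partition $\Pc_t$ induced by the children of $t$ (each child contributes its set of children as a part). My first sub-goal would be to verify that the mixed graph of $\Pc_t$ is $K_k$-subgraph-free and $K_{k,k}$-subgraph-free. Since $\prec$ and $\prec_t$ coincide on non-siblings (by \cref{lem:indist-realisation}), and in particular on cousins, any mixed pair $P,Q \in \Pc_t$ yields a mixed cell in the submatrix of $M_\sigma$ indexed by the leaf-sets $L(P),L(Q)$. Using these leaf-sets (suitably extended to cover all of $X$) as consecutive row- and column-intervals of $M_\sigma$, a $K_k$ clique or $K_{k,k}$ biclique in the mixed graph would then produce a $k$-almost mixed minor of $\sigma$, contradicting $\sigma \in \Ac_k$. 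With these exclusions in hand, \cref{lem:grandchildren-decomp} applies with $\Cc = \Sep^r$, as soon as every restricted permutation $(P,<_t,\prec_t)$ for $P \in \Pc_t$ and some transversal permutation $(R,<_t,\prec_t)$ are shown to lie in $\Ac_{k-1} \subseteq \Sep^r$.

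The crux of the argument, and its main technical obstacle, is exactly this $\Ac_{k-1}$ membership. This is the \emph{induction trick} depicted in \cref{fig:induction-trick}. Suppose, toward contradiction, that one of these permutations carries a $(k-1)$-almost mixed minor with row-intervals $R_1,\dots,R_{k-1}$ and column-intervals $C_1,\dots,C_{k-1}$. Using the representatives from the first condition of \cref{lem:delayed-construction}, this lifts to a $(k-1)$-almost mixed minor of $\sigma$ itself, supported on leaves of $T$. The second condition of \cref{lem:delayed-construction} then supplies, at every boundary between consecutive $R_i, R_{i+1}$ (and similarly for columns), a distinguishing element $y$ lying outside the relevant subtree. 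Collecting these distinguishing elements together with the $<$-minimum $\first$ of $X$ into a new leftmost row-interval $R_0$ and column-interval $C_0$ makes $R_0 \times C_i$ and $R_i \times C_0$ mixed for every $i \ge 1$, yielding a $k$-almost mixed minor of $\sigma$ and the desired contradiction. The subtlety is that distinguishing elements may lie on either side of the subtree in $<$, which breaks the clean ``leftmost $R_0$'' construction. To reduce to this one-sided case I would preprocess $\sigma$ via the shuffle trick (\cref{lem:shuffle-decomp}), factoring it as a bounded product of separable permutations times a permutation where, at every relevant level, distinguishing elements all lie on a uniform side of the subtree. The cost is a bounded number of additional separable factors.

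Finally, assembling the numerics: \cref{lem:grandchildren-decomp} gives $(<_t,\prec_t) \in \Sep^{m+2} \circ \Sep^r \circ \Sep^m$ with $m = \lceil 2\log k\rceil + 2$, hence $(<_t,\prec_t) \in \Sep^{r+4\lceil\log k\rceil+6}$. Taking $\Cc = \Sep^{r+4\lceil\log k\rceil+6}$ in \cref{lem:delayed-decomp} yields $\sigma \in \Sep^{3r+12\lceil\log k\rceil+18}$ in the ideal one-sided case. Absorbing the bounded shuffle overhead into the additive constant then produces the stated bound $s = 3r + 12\lceil\log k\rceil + 28$.
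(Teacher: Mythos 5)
Your high-level architecture matches the paper: delayed construction, colour the mixed graph, apply \cref{lem:grandchildren-decomp}, then \cref{lem:delayed-decomp}, and derive the ``simpler'' subpermutations from a $(k-1)$-almost mixed minor contradiction via the induction trick. However, the crux step as you wrote it has two genuine gaps.

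First, you aim to show that the part permutations $(P,<_t,\prec_t)$ and the transversal permutation $(R,<_t,\prec_t)$ are themselves in~$\Ac_{k-1}$. This is stronger than what actually holds. What the paper establishes (Claim 4.4) is only that a transversal~$S$ of the children of~$t$ can be \emph{partitioned} into two subsets~$S_L, S_R$ (according to whether the distinguishing element of each consecutive sibling pair lies to the left or the right of~$L(t)$ in~$<$), each of which is $(k-1)$-almost mixed free. The full transversal is then merely a $2$-shuffle of~$\Ac_{k-1}$ permutations, hence in~$\Sep^{r+2}$, and one must invoke \cref{lem:grandchildren-decomp} with $\Cc = \Sep^{r+2}$ rather than~$\Sep^r$. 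The ``preprocess $\sigma$ via the shuffle trick so that distinguishing elements all lie on one side'' that you propose is not a workable substitute: the delayed structured tree (and hence which elements distinguish which siblings) is determined by the orders themselves, so you cannot perturb $\sigma$ by a bounded number of separable factors to force the one-sided property uniformly across all nodes; the correct mechanism operates on the transversal, not on~$\sigma$.

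Second, your use of~$\first$ as an anchor requires that $\first$ is $\prec$-outside the interval between~$x_j$ and~$x_{j+1}$; otherwise $\{\first,y_j\}$ may sit entirely between $x_j$ and $x_{j+1}$ in the order~$\prec$, in which case the pair $\{\first,y_j\}$, $\{x_j,x_{j+1}\}$ is \emph{not} mixed and the whole contradiction collapses. The paper handles this up front by splitting~$X$ into the three $\prec$-intervals $X_1 \prec \first \prec X_2 \prec \last \prec X_3$ and running the entire delayed-substitution argument separately inside each~$X_i$ (then recombining as a $4$-shuffle). You omit this split. Incidentally, the extra $+4$ from that final $4$-shuffle, together with the $+2$ from using~$\Sep^{r+2}$ inside~\cref{lem:grandchildren-decomp} (which gets tripled by~\cref{lem:delayed-decomp}), is precisely what bridges your $3r + 12\lceil\log k\rceil + 18$ to the stated $3r + 12\lceil\log k\rceil + 28$; it is not a fudge factor but an exact accounting.
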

\begin{proof}
    Assume that $\Ac_{k-1} \subseteq \Sep^r$,
    and consider $\sigma = (X,<,\prec) \in \Ac_k$ a~$k$-almost-mixed free permutation.
    Denote by $\first$ and $\last$ the minimum and maximum of~$X$ w.r.t.~$<$.
    For the linear order~$\prec$, $\first$ and~$\last$ split~$X$ into three intervals as
    \[ X_1 \prec \first \prec X_2 \prec \last \prec X_3, \]
    up to swapping~$\first$ and~$\last$.
    We consider each~$X_i$ independently, before recombining~$X_1,X_2,X_3,$ and~$\{\first,\last\}$.
    Let $X' = X_i$ be one of these intervals, and~$\sigma' = (X',<,\prec)$ the restricted permutation.

    Let~$(T,<,\{\prec_t\}_{t \in T})$ be a~delayed structured tree for~$\sigma'$
    obtained by \cref{lem:delayed-construction}.
    Consider any internal node~$t \in T$, let~$A$ be its set of children,
    and consider a~transversal~$S \subseteq X'$ of~$A$,
    i.e.\ all elements of~$S$ are descendants of~$t$,
    and each child~$v \in A$ has exactly one descendant in~$S$.
    \begin{claim}
        \label{clm:children-perm}
        The transversal~$S$ admits a~partition into two parts~$S = S_L \uplus S_R$
        such that the permutation~$(<,\prec)$ restricted to each of~$S_L,S_R$ is $(k-1)$-almost mixed free.
    \end{claim}
    \begin{claimproof}
        Enumerate the children of~$t$ as~$v_1,\dots,v_\ell$ in the linear order~$<$,
        and let~$x_i$ be the descendant of~$v_i$ in~$S$.
        We can assume~$\ell > 2$ as the claim is otherwise trivial,
        hence condition~\ref{cond:siblings-split} of \cref{lem:delayed-construction}
        gives that for all~$i \in \{1,\dots,\ell-1\}$, $v_i$ and~$v_{i+1}$ are distinguishable.
        Thus, we can choose some~$y_i \in X' \setminus L(t)$
        such that~$v_i \prec y_i \prec v_{i+1}$, or~$v_{i+1} \prec y_i \prec v_i$.

        If $y_i < L(t)$ (resp.~$y_i > L(t)$) we say that~$v_i$ is \emph{split to the left} (resp.\ \emph{to the right}).
        Thus every~$v_i$ except~$v_\ell$ is split either to the left or to the right.
        We partition~$S$ accordingly, so that~$S_L$ (resp.~$S_R$) contains~$x_i$
        only if~$v_i$ is split to the left (resp.\ to the right).
        We place~$x_\ell$ in either~$S_L$ or~$S_R$ arbitrarily.
        We will prove that~$(S_L,<,\prec)$ is $(k-1)$-almost mixed free,
        the case of~$S_R$ being symmetrical.

        Suppose for a~contradiction that $(S_L,<,\prec)$ has a~$(k-1)$-almost mixed minor.
        It is given by two partitions of~$(S_L,<)$ into~$(k-1)$ intervals, say
        \[ \Rc = \{R_1 < \dots < R_{k-1}\} \qquad \text{and} \qquad \Cc = \{C_1 < \dots < C_{k-1}\} \]
        such that for every~$i \neq j \in [k-1]$, $R_i$ and~$C_j$ are mixed in the linear order~$\prec$.
        Note in particular that all~$R_i$ and~$C_j$ have size at least~2.
        Starting from~$\Rc$ and~$\Cc$, we build two new partitions~$\Rc' = \{R'_0 < \dots < R'_{k-1}\}$
        and~$\Cc' = \{C'_0 < \dots < C'_{k-1}\}$, which will form a~$k$-almost mixed minor of (a pattern of)~$\sigma$, as follows:

        \begin{itemize}
            \item Initially set~$R'_0 = \{\first\}$,~$C'_0  = \{\first\}$, and ~$R'_i = R_i$ and~$C'_i = C_i$ for~$i \ge 1$.
            \item For each~$i \ge 1$, consider~$x_j$ the minimum of~$R_i$ w.r.t.~$<$.
            Since~$R_i$ has size at least~2, we have~$j < \ell$,
            and either~$x_{j+1} \in R_i$, or~$x_{j+1}$ is not in~$S_L$.
            We add~$x_{j+1}$ to~$R'_i$ in the latter case, and in both cases we add~$y_j$ to~$C'_0$.
            Recall that $y_j < L(t)$ since $v_j$ is split left.
            \item The same operation is applied to~$\Cc'$, adding~$y_j$ to~$R'_0$, and~$x_{j+1}$ to~$C'_i$ if needed.
        \end{itemize}
        After this modification, all elements of~$R'_0$ are smaller than~$L(t)$ for~$<$,
        hence~$\Rc'$ is still a~partition of a~subset of~$X$ into intervals of~$<$, and similarly with~$\Cc'$.
        The parts which were originally in~$\Rc,\Cc$ have only increased,
        hence for~$i \neq j$, $i,j \ge 1$, $R'_i$ and~$C'_j$ are mixed.
        We claim that $R'_0$ is mixed with~$C'_i$ for any~$i > 0$, and symmetrically for~$C'_0,R'_i$,
        which implies that the new~$\Rc',\Cc'$ form a~$k$-almost mixed minor in (a submatrix of)~$M_{\sigma}$, a~contradiction.
        
        Indeed, let~$x_j$ be the smallest element of~$R_i$,
        so that~$y_j \in C'_0$ distinguishes~$x_j$ and~$x_{j+1}$,
        i.e.\ either $x_j \prec y_j \prec x_{j+1}$ or $x_{j+1} \prec y_j \prec x_j$.
        On the other hand, by construction of~$X'$,
        $\first$ is not interleaved with~$x_j,x_{j+1}$ (or any elements of~$X'$),
        i.e.\ either~$\first \prec x_j,x_{j+1}$ or $x_j,x_{j+1} \prec \first$.
        The above implies that $\{\first,y_j\}$ and~$\{x_j,x_{j+1}\}$ are mixed.
        The former is contained in~$C'_0$, while the latter is contained in~$R'_i$,
        proving the claim.

        The same reasoning also holds for~$S_R$, using~$\last$ instead of~$\first$ and adding parts~$R'_t$ and~$C'_t$ instead of~$R'_0$ and~$C'_0$.
    \end{claimproof}
    It follows from \cref{clm:children-perm} that for any transversal~$S$ of the children of a~node~$t$,
    the permutation $(S,<,\prec)$ is a~2-shuffle of permutations in~$\Ac_{k-1}$,
    hence is in~$\Sep^{r+2}$ by \cref{lem:shuffle-decomp}.

    Fix now a~node~$t \in T$, and let~$A$ be the set of grandchildren of~$t$.
    Consider the permutation $(A,<,\prec_t)$, and the partition~$\Pc$ of~$A$ induced by the children of~$t$.
    Recall that $\Sep^{r+2}$ is closed under substitution, inverse and taking patterns.
    Let us check that the three conditions of \cref{lem:grandchildren-decomp} are satisfied.
    \begin{enumerate}
        \item If the mixed graph of~$\Pc$ for the linear order~$(A,\prec_t)$ contains either~$K_k$ or~$K_{k,k}$ as a~subgraph,
        then the corresponding parts form a~$k$-almost mixed minor in the matrix of~$(A,\prec_t)$ ordered by~$<$.
        We know that the linear order~$\prec_t$ is defined through a~choice of representatives,
        hence~$(A,<,\prec_t)$ is a~pattern of~$(X,<,\prec)$,
        meaning that the latter would also have a~$k$-almost mixed minor, a~contradiction.
        \item Fix a~part~$P \in \Pc$. The elements of~$P$ are the children of some child~$t'$ of~$t$,
        thus the restricted permutation~$(P,<,\prec_t)$
        is isomorphic to~$(S,<,\prec)$ for some transversal~$S$ of~$P$.
        By \cref{clm:children-perm} applied on~$t'$, this permutation is in~$\Sep^{r+2}$.
        \item Similarly, applying \cref{clm:children-perm} this time to~$t$ shows that
        for any transversal~$S$ of~$\Pc$, the permutation~$(S,<,\prec_t)$ is in~$\Sep^{r+2}$.
    \end{enumerate}
    It follows from \cref{lem:grandchildren-decomp} that
    the permutation~$(A,<,\prec_t)$ is in~$\Sep^{r'}$ with
    \begin{align*}
      r' & = (r+2) + 2\left(\ceil{2 \log k} + 2\right) + 2 \\
         & \le r + 4\ceil{\log k} + 8.
    \end{align*}
    Next, we apply \cref{lem:delayed-decomp} to the delayed structured tree~$T$
    and conclude that~$(X',<,\prec)$ is in~$\Sep^{3r'}$.
    Recall that~$X'$ was one of three intervals of~$(X,\prec)$ defined by $\first$ and $\last$.
    Combining the permutations on these three intervals and on~$\{\first,\last\}$,
    we finally find that~$\sigma = (X,<,\prec)$ is a~4-shuffle of permutations in~$\Sep^{3r'}$.
    By \cref{lem:shuffle-decomp}, this implies that~$\sigma$ is in~$\Sep^{s}$ for
    \begin{align*}
      s & = 3r' + 4 \\
        & \le 3r + 12\ceil{\log k} + 28.
        \qedhere
    \end{align*}
\end{proof}

Recursively applying \cref{thm:main-induction} gives
the following bound on the length of factorisations of $k$-almost mixed free permutations.
\begin{corollary}
  \label{cor:almost-mixed-free-decomp}
  Any $k$-almost mixed free permutation factorises into a~product of at most~$4 \cdot 3^k$ separable permutations.
\end{corollary}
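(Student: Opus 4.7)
The plan is to iterate Theorem \ref{thm:main-induction}, bootstrapping from the base case supplied by Lemma \ref{lem:2almost-mixed-free}. The base case gives $\Ac_2 \subseteq \Sep^1$, so setting $r_2 := 1$ is valid. Defining $r_k := 3 r_{k-1} + 12\lceil \log k\rceil + 28$ for $k \ge 3$, a straightforward induction on $k$ combined with Theorem \ref{thm:main-induction} yields $\Ac_k \subseteq \Sep^{r_k}$ for every $k \ge 2$.

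All that remains is to show $r_k \le 4 \cdot 3^k$. I would unroll the recurrence to obtain
\[
r_k \;=\; 3^{k-2} \;+\; \sum_{j=3}^{k} 3^{k-j}\bigl(12 \lceil \log j \rceil + 28\bigr) \;=\; 3^{k}\left(\frac{1}{9} \;+\; \sum_{j=3}^{k} \frac{12\lceil\log j\rceil + 28}{3^{j}}\right),
\]
and then bound the tail sum by a small constant using standard geometric-series identities. The constant term contributes $28 \sum_{j \ge 3} 3^{-j} = 14/9$. Crudely using $\lceil \log j \rceil \le j$, the logarithmic term contributes at most $12 \sum_{j \ge 3} j \cdot 3^{-j} = 21/9$, via $\sum_{j \ge 1} j x^j = x/(1-x)^2$ evaluated at $x = 1/3$. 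Adding the $1/9$ arising from the base case, the parenthesized factor is at most $36/9 = 4$, hence $r_k \le 4 \cdot 3^k$ as claimed.

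The entire difficulty of the result is concentrated in Theorem \ref{thm:main-induction}; the present derivation is essentially arithmetic bookkeeping. The only delicate point is confirming that the geometric decay of weight $3^{-j}$ comfortably absorbs the $O(\log j)$ additive term at each level, producing the clean coefficient $4$. If one wished to avoid the tail-sum computation, an equivalent direct induction with an ansatz of the form $r_k \le C \cdot 3^k - D\lceil\log k\rceil - E$ would work just as well, yielding the same order of magnitude.
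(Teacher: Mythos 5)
Your proof is correct and follows essentially the same route as the paper: both apply \cref{lem:2almost-mixed-free} as the base case and iterate \cref{thm:main-induction} to obtain the recurrence $r_k = 3r_{k-1} + 12\lceil\log k\rceil + 28$. The only difference is cosmetic---you unroll the recurrence and bound the resulting geometric tail sum (correctly, giving exactly the factor $4$), whereas the paper verifies the ansatz $f(k) = 4\cdot 3^k - 6\lceil\log k\rceil - 23$ directly, which is the alternative you yourself mention in closing.
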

\begin{proof}
  \Cref{lem:2almost-mixed-free,thm:main-induction} give that $k$-almost mixed free permutations
  are product of at~most~$f(k)$ separable permutations for any function~$f$ satisfying
  \begin{align*}
    f(2) & \ge 1 \\
    \text{and } f(k) & \ge 3f(k-1) + 12\ceil{\log k} + 28.
  \end{align*}
  This is satisfied for
  \[ f(k) = 4 \cdot 3^k - 6\ceil{\log k} - 23. \]
  Indeed, we have
  \begin{align*}
    f(2) & = 4 \cdot 3^2 - 6\ceil{\log 2} - 23 = 7. \\
    \text{and } f(k) &= 4 \cdot 3^k - 6\ceil{\log k} - 23 \\
    &\ge 4 \cdot 3^k - 18(1 + \ceil{\log (k-1)}) + 12\ceil{\log k} - 23 \\
    &= 3\big(4 \cdot 3^{k-1} - 6\ceil{\log (k-1)} - 23\big) - 18 + 12\ceil{\log k} + 2 \cdot 23 \\
    &= 3f(k-1) + 12\ceil{\log k} + 28.
    \qedhere
  \end{align*}
\end{proof}

The last step is to show that pattern-avoiding permutations decompose into almost-mixed free ones.
Here, it is worth noting that permutations avoiding a~fixed pattern may contain arbitrarily large mixed minors.
This is the case of 2-shuffles: if~$<$ is the usual order on~$[n]$,
and~$\prec$ orders all odd integers before the even ones,
then the adjacency matrix of~$\prec$ ordered by~$<$ contains an $(n/2)$-mixed minor.
Thus the permutation~$\sigma = (<,\prec)$ has arbitrary mixed minors despite avoiding the pattern~$321$.
(In this specific example, the inverse permutation~$\sigma^{-1} = (\prec,<)$ \emph{is} 4-mixed free,
but one could combine~$\sigma$ and~$\sigma^{-1}$ to obtain an example
where the inverse also has arbitrary mixed minors.)

In general, we need to introduce a~third linear order to obtain a~$k$-mixed free structure.
\begin{lemma}
  \label{lem:pattern-decomp}
  For any pattern~$\pi$, there exists~$k = 2^{O(|\pi|)}$ such that
  any permutation~$\sigma$ avoiding~$\pi$ factorises as
  \[ \sigma = \sigma_2 \circ \sigma_1^{-1} \]
  where~$\sigma_1,\sigma_2$ are $k$-almost mixed free.
\end{lemma}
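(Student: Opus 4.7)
\medskip

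The plan is to directly combine the two main tools of \cref{sec:twinwidth}: the bound on twin-width for pattern-avoiding permutations (\cref{thm:pattern-tww}) and the extraction of a mixed-minor-free linear order from bounded twin-width (\cref{lem:tww-mixed-minor}). The latter will furnish the third order needed for the factorisation, because the adjacency matrix of $\sigma_i$ \emph{is} the adjacency matrix of one of the two orders of $B_\sigma$ re-indexed by the new order.

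Concretely, write $\sigma = (X,<_1,<_2)$ as a biorder. First I would apply \cref{thm:pattern-tww} to get that $B_\sigma = (X,<_1,<_2)$, viewed as a binary relational structure with the two relations $<_1,<_2$, has twin-width at most $c_\pi = 2^{O(|\pi|)}$. Then I would apply \cref{lem:tww-mixed-minor} to this structure, producing a third linear order $<_3$ on $X$ such that the adjacency matrices of both relations $<_1$ and $<_2$, with rows and columns indexed by $<_3$, are $(2c_\pi + 2)$-mixed free.

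Next I would define
\[
    \sigma_1 = (X, <_3, <_1) \qquad \text{and} \qquad \sigma_2 = (X, <_3, <_2).
\]
Unwinding the definition in \cref{sec:decomposition}, the adjacency matrix $M_{\sigma_i}$ is exactly the adjacency matrix of the relation $<_i$ re-indexed by $<_3$, which is $(2c_\pi+2)$-mixed free by the previous paragraph. Using the observation (already noted in \cref{sec:twinwidth}) that a $2k$-almost mixed minor can be compressed by merging consecutive pairs into a $k$-mixed minor, one gets by contraposition that $(2c_\pi+2)$-mixed free implies $(4c_\pi+4)$-almost mixed free; so both $\sigma_1,\sigma_2$ lie in $\Ac_{k}$ with $k = 4c_\pi + 4 = 2^{O(|\pi|)}$. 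Finally, using the factoring identity for biorders recalled just after the definition of biorders in \cref{sec:prelim}, namely $\sigma_{(X,<_3,<_2)} \circ \sigma_{(X,<_1,<_3)} = \sigma_{(X,<_1,<_2)}$, together with $\sigma_1^{-1} = (X,<_1,<_3)$, we obtain $\sigma_2 \circ \sigma_1^{-1} = \sigma$, as required.

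There is no real obstacle to this step: the heavy lifting has already been done in \cref{thm:pattern-tww,lem:tww-mixed-minor}, and the proof is essentially a bookkeeping exercise showing that the adjacency-matrix formulation of $k$-mixed freeness for $\sigma_i$ coincides with the mixed freeness statement produced by \cref{lem:tww-mixed-minor}. The only subtlety worth flagging in the write-up is the $k$-mixed versus $k$-almost mixed conversion, where one loses a factor of $2$; this keeps the final bound singly exponential in $|\pi|$, which is exactly what is claimed.
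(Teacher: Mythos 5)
Your proposal is correct and is essentially identical to the paper's proof: both apply \cref{thm:pattern-tww} to get a twin-width bound $t = 2^{O(|\pi|)}$, then \cref{lem:tww-mixed-minor} to extract a third order $<_3$ making both adjacency matrices $(2t+2)$-mixed free, set $\sigma_1 = (X,<_3,<_1)$, $\sigma_2 = (X,<_3,<_2)$, and convert mixed-freeness to $(4t+4)$-almost-mixed-freeness. The only difference is that you spell out the mixed/almost-mixed conversion and the factoring identity in more detail than the paper does.
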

\begin{proof}
  Let~$(X,<_1,<_2)$ be a~biorder representing~$\sigma$.
  If~$\sigma$ avoids~$\pi$, then by \cref{thm:pattern-tww}, $(X,<_1,<_2)$ has twin-width $t = 2^{O(|\pi|)}$.
  Thus, by \cref{lem:tww-mixed-minor}, there exists a~third linear order~$<_3$ on~$X$
  such that the adjacency matrix of either~$<_1$ or~$<_2$ ordered by~$<_3$ is $(2t+2)$-mixed free.
  It follows that these adjacency matrices ordered by~$<_3$ are $k$-almost mixed free for $k = 4t + 4$.
  Defining the permutations $\sigma_1 = (X,<_3,<_1)$ and $\sigma_2 = (X,<_3,<_2)$, we obtain the result.
\end{proof}

Our main result immediately follows from \cref{cor:almost-mixed-free-decomp,lem:pattern-decomp}.
\mainthm*

\section{Algorithmic implementation}\label{sec:algo}
The proofs of the previous sections are all effective,
and yield an FPT algorithm to compute the factorisation given by \cref{thm:main}.
A~relatively naive implementation should run in quadratic time, for any fixed excluded pattern.
In this section, we explain how to improve this to get a~linear complexity
focusing on the fow sections of the proof for which this is not straightforward:
computing the delayed structured tree in \cref{lem:delayed-construction},
finding a~proper colouring of the mixed graph in \cref{lem:grandchildren-decomp},
and finding a~mixed-minor free order in \cref{lem:pattern-decomp}.

We work in the RAM model.
We assume that a~permutation~$\sigma$ is given as an array containing~$[n]$ in permuted order.
If~$\sigma$ is the biorder~$(X,<_1,<_2)$, it means that
\begin{enumerate}
    \item we can sort~$X$ according to either~$<_1$ or~$<_2$ in linear time, and
    \item after this preprocessing, we can in constant time compute
    successors and predecessors for either of the two linear orders~$<_1,<_2$
    (in addition to being able to compare two given elements).
\end{enumerate}
For example, the linear algorithm would \emph{not} apply if~$<_2$ was given as a~comparison oracle,
or as an array of arbitrary numbers.
Note that Guillemot--Marx algorithm for pattern detection~\cite{guillemot14patterns}
makes the same assumption, and we use it in the form of \cref{thm:pattern-tww}.

\subsection{Delayed substitutions}
We first describe how to compute a~delayed structured tree for a~permutation~$\sigma = (X,<,\prec)$.
We see~$\sigma$ as an array containing values from~$1$ to~$n$,
so that~$<$ is the left-to-right order on the array, and~$\prec$ compares the values.
Thus we will prefer to say `left' and `right' when referring to~$<$,
and `greater' or `smaller' when referring to~$\prec$.

We use as subroutine the classical problem of finding extrema in a~given interval of an array.
\begin{theorem}[\cite{harel1984LCA,Schieber1988LCA,bender2000LCA}]
    \label{thm:interval-max-algo}
    Given an array~$A$, one can, after a~linear-time preprocessing,
    find in constant time the minimum and maximum (and their positions) in any interval in~$A$.
\end{theorem}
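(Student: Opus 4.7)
The plan is to reduce the Range Minimum Query (RMQ) problem, which is what the theorem asks for, to the Lowest Common Ancestor (LCA) problem on a carefully chosen tree, and then solve LCA efficiently using the Four Russians trick. By symmetry (replacing $A$ by $-A$) it suffices to handle minima.

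First, I would build the \emph{Cartesian tree} $C(A)$ of the array $A$: the root is a position holding the minimum of $A$, and the left and right subtrees are recursively the Cartesian trees of the subarrays to the left and right of that position. This construction can be carried out in linear time using a stack-based incremental algorithm that inserts positions in left-to-right order while maintaining the right spine. The key observation is that for any interval $[i,j]$ the position of the minimum of $A$ on $[i,j]$ is exactly the LCA of the nodes $i$ and $j$ in $C(A)$. So constant-time interval minimum queries reduce to constant-time LCA queries on a tree with $n$ nodes, after linear preprocessing.

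To solve LCA in constant time after linear preprocessing, I would perform an Euler tour of $C(A)$ and record the sequence of node depths $D[1..2n-1]$ encountered; then the LCA of $i$ and $j$ is the node at the position of the minimum of $D$ on the interval delimited by the first occurrences of $i$ and $j$. The sequence $D$ is special: consecutive entries differ by exactly $\pm 1$, which is the setting of the $\pm 1$-RMQ problem. Here I would apply the standard Four Russians technique: partition $D$ into blocks of size $b = \tfrac{1}{2}\log n$, precompute for each block its minimum and store these in a sparse table (giving $O(n)$ space and $O(1)$ lookup across blocks using overlapping power-of-two intervals), and separately precompute, for each of the $2^{b-1} = O(\sqrt n)$ possible $\pm 1$-patterns of a block, a table of minimum positions for every sub-interval of that block. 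A query is then decomposed into at most two intra-block lookups and one sparse-table lookup between blocks, all in $O(1)$.

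The main conceptual obstacle is the last step: one must verify that the $\pm 1$ restriction is genuinely exploited, so that the total size of the intra-block tables is $O(\sqrt n \cdot b^2) = o(n)$, keeping the whole preprocessing linear. The rest of the argument is bookkeeping: the Cartesian tree and Euler tour are linear-time constructions, the sparse table on block minima is built in $O((n/b)\log(n/b)) = O(n)$, and each individual query reduces to a constant number of array look-ups. Finally, the same data structures, duplicated for the array $-A$, yield the maxima, and positions are retrieved alongside values at no extra asymptotic cost.
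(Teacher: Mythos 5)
Your sketch is correct and is exactly the Bender--Farach-Colton argument that the paper cites for this theorem (Cartesian tree $\to$ LCA $\to$ Euler tour $\pm 1$-RMQ $\to$ Four Russians); the paper itself does not reprove the result but simply invokes it from~\cite{harel1984LCA,Schieber1988LCA,bender2000LCA}. All the bookkeeping you flag checks out: the block tables cost $O(\sqrt{n}\log^2 n) = o(n)$, the sparse table over the $O(n/\log n)$ block minima costs $O(n)$, and the Cartesian tree and Euler tour are linear, so the preprocessing is linear and queries are $O(1)$ as required.
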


Given~$x,y \in X$, let~$D(x,y)$ be the interval between~$x$ and~$y$ in the linear order~$\prec$.
This is the set of elements that \emph{distinguish}~$x$ and~$y$.

\begin{lemma}
    Given a~permutation~$\sigma$, one can compute in time~$O(\card{\sigma})$ 
    a~delayed structured tree for~$\sigma$ subject to the restrictions of \cref{lem:delayed-construction}.
\end{lemma}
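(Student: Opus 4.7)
The plan is to implement the recursive construction of \cref{lem:delayed-construction} in linear time, identifying $X$ with $[n]$ so that $<$ is the natural order on positions and $\prec$ compares values $\sigma(i)$; every $L(t)$ that arises is then an interval $[a,b]$ of positions. First I would preprocess $\sigma$ for constant-time range min/max queries via \cref{thm:interval-max-algo} in $O(n)$. Case (ii) of the construction---``$L(t)$ is an interval of $\prec$''---is then decidable in $O(1)$: it holds iff $\max \sigma([a,b]) - \min \sigma([a,b]) = b - a$, in which case we split $[a,b]$ in half arbitrarily; case (i) is trivial.

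The delicate step is case (iii), where $L(t)$ is not an interval of $\prec$ and must be partitioned greedily into maximal local modules. An interval $[p,q] \subseteq [a,b]$ is a local module of $[a,b]$ iff the integer value range $[\mu(p,q), \nu(p,q)]$ is entirely contained in $\sigma([a,b])$, equivalently iff no outside position $y \notin [a,b]$ has $\sigma(y)$ strictly between $\mu$ and $\nu$. Starting at $p = a$, I would sweep $q$ rightwards, maintaining $\mu$, $\nu$ and two pointers to the outside values just above $\nu$ and just below $\mu$; since these pointers move monotonically within a single scan, processing one interval $[a,b]$ this way costs $O(|L(t)|)$.

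The main obstacle is the total cost: naively this yields $\sum_t O(|L(t)|)$, which may be $\Theta(n^2)$. To achieve $O(n)$ overall, I would avoid the per-node recursive scan and instead build the entire tree in a single left-to-right sweep of $\sigma$, using a stack-based approach in the spirit of linear-time algorithms for the tree of strong common intervals of a permutation (Uno--Yagiura; Bergeron--Chauve--de~Montgolfier--Raffinot): one maintains a stack of pending sub-intervals and performs amortized $O(1)$ work per new position, popping and merging intervals from the stack whenever their union becomes a local module of its ambient interval. A small modification outputs local modules rather than common intervals, and a post-processing pass binarizes case-(ii) nodes by arbitrary caterpillars. Condition~\ref{cond:siblings-split} of \cref{lem:delayed-construction} is automatically enforced, since by maximality of greedy local modules, consecutive siblings of a case-(iii) node are distinguished.

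Finally, the linear orders $\prec_t$ on grandchildren (condition~\ref{cond:order-representatives}) are produced in $O(n)$ as follows: compute $\first(v)$ (the $\prec$-minimum leaf descendant) for each node in one bottom-up pass, then traverse the leaves in $\prec$-order and, for each leaf $\ell$, append $v$ to the grandchildren list of its grandparent whenever $\ell = \first(v)$ for some ancestor $v$. Each grandchild is charged exactly once, giving linear total time.
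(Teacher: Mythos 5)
Your first two steps (range min/max preprocessing, the $O(1)$ test for case (ii), and the observation that a naive per-node scan of $[a,b]$ gives only a quadratic bound) correctly identify the real obstacle, and the construction of the orders $\prec_t$ via $\first(v)$ is sound. However, the fix you propose for the quadratic blow-up has a genuine gap.

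The obstruction is that a \emph{local module} of $L(t)$, unlike a common interval, is a \emph{relative} notion: whether $[p,q]$ is a local module depends on the ambient interval $L(t)=[a,b]$ (only positions outside $[a,b]$ may split $[p,q]$), and $L(t)$ in turn depends on the arbitrary choices made in case~(ii) above $t$ in the recursion. A common-interval $[p,q]$ is always a local module of any ambient $[a,b]\supseteq[p,q]$, but the converse is very far from true (e.g.\ in $\sigma = 1,3,5,4,2$ the sub-interval of positions $[2,3]$, with values $\{3,5\}$, is a local module of $[2,5]$ yet not a common interval). Consequently the tree to be output is not intrinsic to $\sigma$: it cannot be read off a bottom-up single sweep as the tree of strong common intervals can, and the ``small modification'' you invoke for the Uno--Yagiura / Bergeron et al.\ stack algorithm is exactly the part that would need to be argued and, as far as I can see, does not exist. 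Your reduction to ``binarizing case-(ii) nodes in post-processing'' does not help: the location of the case-(ii) arbitrary split changes which $L(t)$ the children see, and hence which local modules are produced at lower levels, so post-processing cannot be decoupled from the recursion.

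The paper's way around the quadratic bound keeps the top-down recursion but makes the per-node cost proportional to the \emph{number of children produced} rather than to $|L(t)|$, after which $\sum_t (\text{number of children of }t) = O(n)$ gives linearity. To achieve constant cost per produced part, it precomputes two auxiliary arrays $L[i]$ and $R[i]$: the leftmost and rightmost position distinguishing the consecutive (in $<$) pair $x_i,x_{i+1}$, obtainable as a range min/max query on $\sigma^{-1}$. Then range-min/max structures on $L$ and $R$ answer ``what is the leftmost/rightmost position distinguishing some pair inside $Y$, and at which consecutive pair'' in $O(1)$. Splitting $Y$ proceeds by a recursion that, at each step, either certifies in $O(1)$ that the current piece has no distinguishers outside $L(t)$ (hence is a maximal local module and a finished child) or finds one split point in $O(1)$ and recurses on the two halves; the recursion tree has one internal node per produced child, hence $O(k)$ total. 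This is the key idea you are missing.
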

\begin{proof}
    For all intents and purposes, the proof of~\cref{lem:delayed-construction} is already an algorithm.
    The issue is to implement it in linear time.
    Let us recall the important basic operation in the proof of~\cref{lem:delayed-construction}:
    given~$Y \subseteq X$ an interval for~$<$,
    we want to partition it into some intervals~$Y_1 < \dots < Y_k$ such that
    \begin{itemize}
        \item no~$z \in X \setminus Y$ distinguishes two elements of the same~$Y_i$ (i.e.~$Y_i$ is a~local module), and
        \item $Y_i$ and~$Y_{i+1}$ are distinguished by some~$z \in X \setminus Y$ (i.e.\ the~$Y_i$ are maximal).
    \end{itemize}
    The delayed structured tree is obtained by repetitively applying the former operation,
    except when~$Y$ is simultaneously an interval for~$<$ and for~$\prec$, in which case we do an arbitrary split.
    To construct the tree in linear time,
    it is sufficient to implement this splitting operation in time~$O(k)$,
    where~$k$ is the number of parts produced by the splitting.

    To this end, we define some auxiliary arrays.
    Let~$x_1,\dots,x_n$ be the elements of~$X$ ordered left to right.
    For each pair of consecutive~$x_i,x_{i+1}$,   
    let~$l_i$ and~$r_i$ be the leftmost and rightmost elements of~$D(x_i,x_{i+1})$.
    Remark that this definition is independent of~$Y$.
    Let $L$ be the array $l_1, \ldots, l_{n-1}$, and $R$, the array $r_1, \ldots, r_{n-1}$.
    \begin{claim}
        One can compute all~$l_i$ and~$r_i$ in linear time.
    \end{claim}
    \begin{claimproof}
        This is exactly \cref{thm:interval-max-algo},
        but on the inverse permutation~$\sigma^{-1}$.
        Indeed, $l_i$ and~$r_i$ are the minimum and maximum w.r.t.~$<$
        of the interval between~$x_i$ and~$x_{i+1}$ for~$\prec$.
    \end{claimproof}

    We now want to generalise this claim to any interval of~$X$.
    Given~$Y \subseteq X$ an interval for~$<$,
    let~$l(Y)$ and~$r(Y)$ be the leftmost and rightmost elements in~$X$
    that distinguish some pair of elements in~$Y$.
    Remark that if~$z$ distinguishes two elements of~$Y$,
    then it must also distinguish two \emph{consecutive} elements of~$Y$.
    It follows that if~$Y = \{x_i,\dots,x_j\}$,
    then~$l(Y)$ is the leftmost element among~$l_i,\dots,l_{j-1}$,
    and~$r(Y)$ is the rightmost element among~$r_i,\dots,r_{j-1}$.
    \begin{claim}
        \label{clm:splitting-query}
        After a~linear-time preprocessing, one can answer the following query in constant time:
        Given an interval~$Y$ of~$<$,
        \begin{enumerate}
            \item find~$l(Y)$ and~$r(Y)$, and
            \item find two consecutive~$x_i,x_{i+1} \in Y$ distinguished by~$l(Y)$, or by~$r(Y)$.
        \end{enumerate}
    \end{claim}
    \begin{claimproof}
        Apply \cref{thm:interval-max-algo} to the auxiliary arrays $L$ and~$R$.
    \end{claimproof}

    We are now ready to solve the problem of splitting an interval~$Y$ into local modules.
    Using \cref{clm:splitting-query}, we compute~$l(Y)$ and~$r(Y)$.
    If both are contained in~$Y$, then~$Y$ is simultaneously an interval of~$<$ and of~$\prec$,
    and there is nothing to do.
    Otherwise, if for instance~$l(Y) \not\in Y$,
    we also obtain from \cref{clm:splitting-query} two consecutive~$x_i,x_{i+1}$ distinguished by~$l(Y)$.
    We split~$Y$ into~$Y_1,Y_2$ containing elements left of~$x_i$, and right of~$x_{i+1}$ (inclusive).
    The above only takes constant time.
    We then recurse in~$Y_1$ and~$Y_2$.
    If for instance~$l(Y_1)$ and~$r(Y_1)$ are both inside~$Y$ (but not necessarily in~$Y_1$),
    then~$Y_1$ is a~local module of~$Y$, and we do not need to split it further.
    Otherwise, we obtain a~new position at which~$Y_1$ can be split, and we continue.
    
    This construction stops once~$Y$ has been partitioned into local modules~$Y_1,\dots,Y_k$.
    The number of steps, and hence the complexity, is proportional to the number of parts~$k$.
    Finally, each split between consecutive~$Y_i,Y_{i+1}$ is explicitly given by
    some element outside~$Y$ distinguishing the rightmost element of~$Y_i$ from the leftmost element of~$Y_{i+1}$.
    Thus the sets~$Y_i$ are also maximal as desired.
\end{proof}

\subsection{Mixed graphs}
Next, we explain how to colour the mixed graphs of \cref{lem:grandchildren-decomp} in linear time,
at the price of a~somewhat worse bound than the one given by \cref{lem:mixed-degenerate}.

\begin{lemma}
    Given a~linear order~$(X,<)$, and a~partition~$\Pc$ of~$X$
    whose mixed graph has no~$K_t$ or~$K_{t,t}$ subgraph,
    one can compute a~$4t^3$-colouring of the mixed graph of~$\Pc$
    in time $O(t^3 \card{X})$.
\end{lemma}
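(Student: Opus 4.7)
The plan is to construct an adjacency-list representation of the mixed graph~$G$ in time~$O(t^3|X|)$ such that every part has $O(t^3)$ back-neighbours under a fixed vertex ordering, and then apply the greedy colouring algorithm of~\cref{lem:degen-colouring} to obtain a $4t^3$-colouring. As preprocessing, we compute the interval closure $\clos{P} = [\min_\prec P, \max_\prec P]$ of each part~$P$ in linear time, and sort the parts by the left endpoint of $\clos{P}$ (with longer closures first in case of ties) in linear time via bucket sort; the resulting order is used as the back-ordering.

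The edges of~$G$ naturally split into two kinds: \emph{overlap} edges, where $\clos{P}$ and $\clos{Q}$ properly overlap (they intersect and neither contains the other), and \emph{containment} edges, where one closure contains the other. As observed in the proof of~\cref{lem:mixed-degenerate}, proper overlap of closures automatically entails mixedness, so all overlap edges are enumerated by applying \cref{lem:circle-graph-construction} to the family $\{\clos P\}_{P \in \Pc}$; this builds the overlap graph in time $O(|X| + |\Pc| + E_o)$, and by~\cref{lem:circle-graph-degenerate} we have $E_o = O(t^2|\Pc|) = O(t^2|X|)$, well within budget.

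For containment edges, the key structural fact inside the proof of~\cref{lem:mixed-degenerate} is that, for each fixed~$P$, at most $t - 1$ parts $Q$ with $\clos P \subsetneq \clos Q$ are mixed with~$P$ (such neighbours form a clique in~$G$, forbidden by $K_t$-freeness). We locate them by sweeping over positions while maintaining the set of currently active parts (those whose closure contains the current position) in a suitable data structure indexed by closing time: for each new~$P$ we iterate through active ancestors, test mixedness via range min/max queries on the position array (\cref{thm:interval-max-algo}) to check whether some element of~$Q$ lies strictly in $(\min_\prec P, \max_\prec P)$, and stop once $t-1$ mixed ancestors have been found. A careful analysis, using $K_{t,t}$-subgraph-freeness to bound the number of non-mixed ancestors that must be inspected before exhausting the mixed ones, keeps the total cost at $O(t^3|X|)$. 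This is the main obstacle: a naive sweep could process arbitrarily many nested non-mixed ancestors before identifying the few mixed ones, so the algorithm must exploit the structural bounds not only to limit the final number of colours but also to prune the search itself.
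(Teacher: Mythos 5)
Your handling of the overlap edges matches the paper: build the overlap graph of the interval closures via \cref{lem:circle-graph-construction}, bound $|E_2|$ by \cref{lem:circle-graph-degenerate}, and colour greedily. The gap is exactly where you flag it, in the containment edges, and your proposal does not resolve it. A position sweep over \emph{all} parts can accumulate an unbounded stack of nested, pairwise non-mixed closures above the current part, none of which are mixed with it (take nested two-element parts $I_i = \{-i, 2i\}$ and a small part $P=\{1,3\}$: the $I_i$ have no element inside $\clos{P}$, so none are mixed with $P$ or with each other, yet all are ``active''). Neither $K_t$- nor $K_{t,t}$-freeness bounds the number of such non-mixed active ancestors, so ``stop once $t-1$ mixed ancestors are found'' gives no bound on the time spent before that point. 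You correctly identify this as the main obstacle, but you leave it unfilled.

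The paper avoids it with a two-level decomposition. It first colours the overlap graph alone (with $4t^2$ colours); within each colour class $\Pc'$ the closures are nested, i.e.\ form a laminar forest $F$ with each $x\in X$ attached as a leaf under the smallest containing $\clos A$, $A\in\Pc'$. The crucial observation is then direction-reversed relative to yours: instead of scanning from a part up through its ancestors (most of which may be non-mixed), it fixes an \emph{ancestor} $B$, and for each element $b\in B$ walks from $b$'s leaf up to $B$ in $F$. Every node $A$ on that path satisfies $b\in\clos A\subsetneq\clos B$, which forces $A$ and $B$ to be mixed; so every step of the walk discovers a genuine edge, the running time is $O(|B|)$ plus the number of edges found, and $K_t$-freeness bounds those edges by $(t-1)|\Pc'|$. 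This is precisely what lets the paper charge the entire search to the output without any pruning heuristics. If you want to salvage a sweep-based version, you would need to incorporate this laminarization-within-a-colour-class step first; as written, the proposed algorithm is not correct.

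Two smaller points. Your tie-breaking rule (``longer closures first'') for the vertex ordering is not by itself a degeneracy ordering of the mixed graph, so it does not justify a $4t^3$-colouring via \cref{lem:degen-colouring}; the paper instead combines a $4t^2$-colouring of the overlap graph with a $t$-colouring of each laminar subgraph, yielding $4t^3$. And the RMQ test via \cref{thm:interval-max-algo} is a fine primitive, but the paper does not need it for this lemma: mixedness along the enumerated paths holds automatically by construction.
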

\begin{proof}
    We know by \cref{lem:mixed-degenerate} that the mixed graph of~$\Pc$ is $(4t^2-1)$-degenerate.
    Thus, using \cref{lem:degen-colouring}, we could compute a~colouring in linear time if we were given the mixed graph.
    The only problem is to compute the latter.
    We set $n = \card{X}$.
Recall the partition of the edges of the mixed graph used in \cref{lem:mixed-degenerate}:
    $E_1$ contains the edges~$AB$ such that the interval closures of the parts~$A,B \in \Pc$
    satisfy $\clos{A} \subseteq \clos{B}$ or $\clos{B} \subseteq \clos{A}$,
    and the remaining edges are in~$E_2$.

    The graph~$(\Pc,E_2)$ is the overlap graph of the interval closures of parts of~$\Pc$.
    By \cref{lem:circle-graph-degenerate}, its maximum edge density is at most~$2(t-1)^2$.
    In particular, the number of edges in~$E_2$ is~$O(n \cdot t^2)$,
    hence by \cref{lem:circle-graph-construction,lem:degen-colouring},
    we can compute~$(\Pc,E_2)$ and find a~$4t^2$-colouring, all in time~$O(n \cdot t^2)$.

    Let us now focus on the edges of~$E_1$.
    We will not try to compute~$E_1$ entirely.
    Instead, we consider a~colour class~$\Pc' \subseteq \Pc$ of~$(\Pc,E_2)$---%
    hence a~subset of parts satisfying that no pair~$A,B \in \Pc'$ overlaps---%
    and we will find a~$t$-colouring of the subgraph~$(\Pc',E_1)$.

    The interval closures of parts in~$\Pc'$ do not overlap,
    hence they are a~laminar family, or equivalently a~rooted forest~$F$,
    where the ancestor relation is inclusion.
    We add as leaves of~$F$ all the singletons~$\{x\}$, $x \in X$,
    whose parent is the smallest interval~$\clos{A}$ containing~$x$ with~$A \in \Pc'$, if any.
    Using a~stack and iterating over~$\Pc'$ ordered by left endpoints,
    one can compute the edges of~$F$ in time~$O(\card{X} + \card{\Pc'})$.

    Consider now an edge~$AB \in E_1$ with~$\clos{A} \subseteq \clos{B}$.
    For this edge to exist, i.e.\ for~$A$ and~$B$ to be mixed, there must be some~$b \in B \cap \clos{A}$.
    We say that~$b$ \emph{witnesses} the edge~$AB$.
    Remark that this edge may be witnessed by many different elements of~$B$.
    Now suppose that~$B$ and~$b \in B$ are fixed,
    and consider all the~$A \in \Pc'$ such that there is an edge~$AB$ witnessed by~$b$.
    These are exactly the~$A \in \Pc'$ satisfying~$b \in \clos{A} \subseteq \clos{B}$,
    that is, the nodes on the path from~$b$ to~$B$ in~$F$.
    Therefore, to compute all edges of~$E_1$ going down (in~$F$) from~$B$,
    it suffices to consider all elements~$b \in B$,
    and take the union of all paths from such a~$b$ to~$B$ in~$F$.
    It is simple to compute this in time linear in~$\card{B}$ plus the number of edges going down from~$B$.
    Repeating this process for every~$B \in \Pc'$
    allows to compute the restriction of~$E_1$ to~$\Pc'$
    in time linear in~$\card{X}$ plus the number of edges.
    Finally, we know from the proof of \cref{lem:mixed-degenerate} that~$E_1$ is $(t-1)$-degenerate.
    Thus the number of edges above is~$O(t \card{\Pc'})$,
    and we can compute a~$t$-colouring of~$(\Pc',E_1)$ in time~$O(t \card{\Pc'})$.

    Thus, we have computed a~$4t^2$-colouring of~$(\Pc,E_2)$,
    and inside each colour class~$\Pc'$, a~$t$-colouring of~$(\Pc',E_1)$.
    Combining these yields a~$4t^3$-colouring of the mixed graph~$(\Pc,E_1 \cup E_2)$.
\end{proof}

\subsection{Mixed-minor free orders}
Let us finally comment on the algorithmic aspect of \cref{lem:pattern-decomp},
since it is the part of our proof which is not self-contained.

Firstly, \cref{thm:pattern-tww}, which is a~key part of Guillemot--Marx algorithm for pattern recognition,
can be implemented efficiently:
For any fixed pattern~$\pi$, there is an algorithm which given any permutation~$\sigma$,
either finds an instance of~$\pi$ as pattern of~$\sigma$,
or finds a~partition sequence for~$\sigma$ of width~$t = 2^{O(\card{\pi})}$,
in linear time~$f(\card{\pi}) \cdot O(\card{\sigma})$.
In our case, $\sigma$ is assumed to avoid a~fixed pattern~$\pi$,
hence we obtain a~partition sequence for~$\sigma$ in linear time.

We then need to retrieve the order~$<$ given by \cref{lem:tww-mixed-minor}
for which the adjacency matrix of~$\sigma$ (as biorder) is $(2t+2)$-mixed free.
Consider a~partition sequence~$\Pc_n,\dots,\Pc_1$ for a~binary structure~$(V,R_1,\dots,R_k)$.
An ordering~$<$ of~$V$ is \emph{compatible} with this partition sequence
if for any~$i \in [n]$ and part~$X \in \Pc_i$, $X$ is an interval of~$(V,<)$.
Equivalently, one can represent the partition sequence as a~tree~$T$ with leaves~$V$,
whose internal nodes represent each merge of two parts,
and the order~$<$ is compatible with the partition sequence if and only if it is compatible with~$T$.
There always are orders compatible with any given partition sequence,
and these are the orders used in \cref{lem:tww-mixed-minor}.
That is, a~more precise statement of \cref{lem:tww-mixed-minor} is (cf.\ \cite[Theorem~5.4]{twin-width1}):
if~$\Pc_n,\dots,\Pc_1$ is a~partition sequence of width~$t$ for~$(V,R_1,\dots,R_k)$,
and~$<$ is any ordering of~$V$ compatible with it,
then the adjacency matrix of~$R_i$ ordered by~$<$ is $(2t+2)$-mixed free.
Given a~partition sequence, it is simple to compute a~compatible ordering in linear time,
by constructing the tree associated with the partition sequence, and traversing the latter.

Combining these two arguments, we obtain a~linear algorithm
to compute the decomposition given by \cref{lem:pattern-decomp}
of pattern-avoiding permutations into $k$-mixed free permutations.

\section{Factoring structures of bounded twin-width}\label{sec:graphs}
In this final section, we show how to adapt our result to graphs of bounded twin-width
in order to describe them by structures with universally bounded twin-width.
To this end, we first introduce a~representation of products of permutations as ordered graphs.

\subsection{Products of permutations as path systems}
Let~$\sigma_1,\dots,\sigma_m$ be permutations of a~linear order~$(X,<)$.
The \emph{path system representation} of the product $\sigma_m \circ \dots \circ \sigma_1$ is defined as follows.
First take~$m+1$ copies~$X_0,\dots,X_m$ of~$(X,<)$, and for each~$i \in [0,m]$ and~$x \in X$,
let~$x_i$ denote the copy of~$x$ in~$X_i$.
Then, for each~$i \in [m], x \in X$, add an edge between~$x_{i-1}$ and~$\sigma_i(x)_i$.
See \cref{fig:path-system} in the introduction for an example.
Finally, we fix a~\emph{canonical linear order} on the vertices,
namely~$X_0 < \dots < X_m$, while keeping the order inside~$X$ for each copy.

We will prove the following crucial property:
If the permutations~$\sigma_i$ have bounded twin-width,
then the path system representation has twin-width bounded independently of the length~$m$.
The arguments are exactly the same as in \cite[Proposition~6.5]{twin-width2},
we reproduce them for the sake of completeness.
Here, we use the standard sparse permutation matrices (and not the dense biorder adjacency matrices of \cref{sec:decomposition}):
the matrix~$M_\sigma$ of a~permutation~$\sigma$ has a~1 at position~$(i,\sigma(i))$ for each~$i$.
Recall that a~$k$-grid in a~0,1-matrix is a~$k$-division in which every zone contains a~1,
and that a~class~$\Cc$ of permutations avoids some pattern
if and only if there is a~$k$ such that matrices of permutations in~$\Cc$ have no $k$-grid.

\begin{lemma}
    \label{lem:grid-path-system}
    Let~$\sigma_1,\dots,\sigma_m$ be permutations whose adjacency matrices have no $r$-grid.
    Then the adjacency matrix of the path system representation of $\sigma_m \circ \dots \circ \sigma_1$,
    has no $(3r+2)$-grid when ordered according to the canonical linear order.
\end{lemma}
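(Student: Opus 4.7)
Suppose for contradiction that the canonical adjacency matrix of the path system admits a $(3r+2)$-grid, with row intervals $R_1 < \dots < R_k$ and column intervals $C_1 < \dots < C_k$ for $k = 3r+2$. For each $R_i$, define its \emph{layer span} $[a_i, b_i]$ as the set of layer indices $\ell$ with $R_i \cap X_\ell \neq \emptyset$, and similarly $[c_j, d_j]$ for $C_j$. Since the path system only has edges between consecutive layers, the cell $(R_i, C_j)$ can contain a $1$ only if $[a_i, b_i]$ and $[c_j, d_j]$ overlap or are at distance at most one. My plan is to extract from the grid an $r$-grid inside $M_{\sigma_\lambda}$ for a single layer index $\lambda$, contradicting the hypothesis.

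The first step is to show that the layer spans are very concentrated. From $a_1 = c_1 = 0$ and $b_k = d_k = m$, combined with the interactions $R_1 \leftrightarrow C_k$ and $R_k \leftrightarrow C_1$, I would derive $c_k \leq b_1 + 1$ and $a_k \leq d_1 + 1$; together with $b_1 \leq a_k$ and $d_1 \leq c_k$ from the monotonicity of the spans, this yields $|a_k - c_k| \leq 1$. Iterating via $a_i \geq b_{i-1} \geq c_k - 1$ and $b_i \leq a_{i+1} \leq d_1 + 1$ for intermediate indices $i \in [2, k-1]$ forces $[a_i, b_i] \subseteq [c_k - 1, c_k + 1]$, and symmetrically $[c_j, d_j] \subseteq [a_k - 1, a_k + 1]$. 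So every intermediate row and column has its layer span inside a window of at most three consecutive layers.

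The ordering constraint $b_i \leq a_{i+1}$ further shows that at most two of the $k - 2 = 3r$ intermediate rows are \emph{multi-span} (i.e.\ touch more than one layer): a row of span $\{c_k-1,c_k,c_k+1\}$, if present, is the only multi-span row; otherwise at most one row of span $\{c_k-1,c_k\}$ can appear, followed by at most one of span $\{c_k,c_k+1\}$. Hence at least $3r - 2$ intermediate rows lie entirely within a single layer of the row window, and pigeonhole produces a layer $X_{\ell_R}$ containing at least $\lceil(3r-2)/3\rceil = r$ of them; symmetrically a layer $X_{\ell_C}$ contains at least $r$ intermediate columns. For any contained row $R_i \subseteq X_{\ell_R}$ and column $C_j \subseteq X_{\ell_C}$, the edge witnessing cell $(R_i, C_j)$ has its endpoints in $X_{\ell_R}$ and $X_{\ell_C}$, which by the structure of the path system forces $|\ell_R - \ell_C| = 1$.

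Setting $\lambda = \max(\ell_R, \ell_C)$, so that $\{\ell_R, \ell_C\} = \{\lambda-1,\lambda\}$, all the witnessing edges above lie in $M_{\sigma_\lambda}$ (or its transpose, which has the same grid number). I then extend the $r$ chosen row (resp.\ column) intervals to a partition of $X_{\ell_R}$ (resp.\ $X_{\ell_C}$) into exactly $r$ intervals by absorbing each uncovered gap into an adjacent selected interval; each cell of the resulting $r \times r$ division of $M_{\sigma_\lambda}$ contains an edge of a corresponding original cell, hence a $1$-entry, producing an $r$-grid in $M_{\sigma_\lambda}$ and contradicting the hypothesis. The main bookkeeping obstacle is a short case analysis in the previous step needed to guarantee that the row- and column-pigeonhole layers can be chosen at distance one: this relies on $|a_k - c_k| \leq 1$ and on the observation that if the row count concentrates at a corner of its window then the grid property forces every contained column into the unique adjacent candidate layer, so that the column pigeonhole reduces to one or two options.
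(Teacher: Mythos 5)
Your proof is correct and follows essentially the same route as the paper's: both use the block-bidiagonal structure of the path-system matrix together with the first and last column (resp.\ row) blocks to pin every intermediate row (resp.\ column) interval inside a window of three consecutive layers, discard the $O(1)$ intervals straddling layer boundaries, and pigeonhole the remaining $\ge 3r-2$ single-layer intervals to extract an $r$-grid inside a single block~$M_{\sigma_\lambda}$. The ``main bookkeeping obstacle'' you flag at the end is actually vacuous: once $\ell_R$ and $\ell_C$ each host a contained interval, any cell between them must carry a $1$, and since the matrix is supported only on consecutive-layer blocks this already forces $\card{\ell_R - \ell_C}=1$ with no case analysis needed.
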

\begin{proof}
    Let~$G$ be the path system representing $\sigma_m \circ \dots \circ \sigma_1$,
    with vertices $X_0 \uplus \dots \uplus X_m$ as above.
    The adjacency matrix of~$G$ consists of a~double diagonal of blocks
    corresponding to the adjacency matrices of~$X_{i-1}$ against~$X_i$ for each~$i$.
    The latter is exactly the permutation matrix~$M_{\sigma_i}$, or its transpose.
    See \cref{fig:path-system-matrix}.

    \makeatletter
    \def\Ddots{\mathinner{\mkern1mu\raise\p@
    \vbox{\kern7\p@\hbox{.}}\mkern2mu
    \raise4\p@\hbox{.}\mkern2mu\raise7\p@\hbox{.}\mkern1mu}}
    \makeatother
    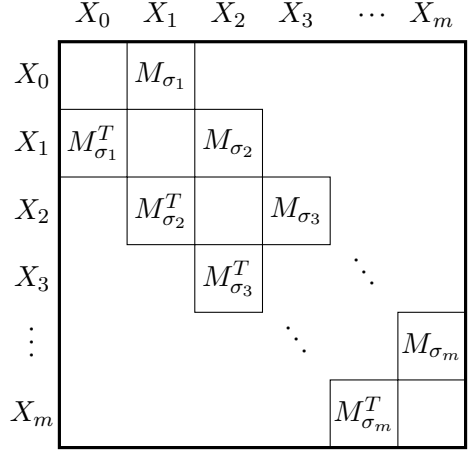
\begin{figure}[ht]
    \centering
    \begin{tikzpicture}
      \def\s{0.9}
      \def\t{0.9}
      \def\n{5}

      \begin{scope}
      \foreach \i/\j in {0/1,1/0,1/2,2/1,2/3,3/2,5/4,4/5}{
        \draw (\s*\i, -\s*\j) --+(-\s,0) --+(-\s,\s) --+(0,\s) -- cycle;
      }
      \foreach \i/\j in {0/1,1/2,2/3}{
        \node (M\i\j) at (\s*\i - \s/2, -\s*\j + \s/2) {$M_{\sigma_\j}^T$};
        \node (M\j\i) at (\s*\j - \s/2, -\s*\i + \s/2) {$M_{\sigma_\j}$};
      }

      \node (M34) at (\s*3 - \s/2, -\s*4 + 0.6*\s) {\rotatebox{90}{$\Ddots$}};
      \node (M43) at (\s*4 - \s/2, -\s*3 + 0.6*\s) {\rotatebox{90}{$\Ddots$}};
      \node (M45) at (\s*4 - \s/2, -\s*5 + \s/2) {$M_{\sigma_m}^T$};
      \node (M54) at (\s*5 - \s/2, -\s*4 + \s/2) {$M_{\sigma_m}$};

      \draw[very thick] (-\t,\t) -- (\n*\s,\t) -- (\n*\s,-\n*\s) -- (-\t,-\n*\s) -- cycle;

      \foreach \i in {0,...,3}{
        \node (R\i) at (-\t*1.4, -\i*\s + \s/2) {$X_\i$};
        \node (C\i) at (\i*\s - \s/2,\t*1.4) {$X_\i$};
      };
      \node (R4) at (-\t*1.4, -4*\s + 0.6*\s) {\rotatebox{90}{$\cdots$}};
      \node (C4) at (4*\s - \s/2,\t*1.4) {\rotatebox{90}{$\vdots$}};
      \node (R5) at (-\t*1.4, -5*\s + \s/2) {$X_m$};
      \node (C5) at (5*\s - \s/2,\t*1.4) {$X_m$};
      \end{scope}
    \end{tikzpicture}
    \caption{The adjacency matrix of the path system representation of~$\sigma_m \circ \dots \circ \sigma_1$.}
    \label{fig:path-system-matrix}
    \end{figure}

    Consider now an $l$-grid in this matrix induced by a~division~$\Rc,\Cc$.
    \begin{claim}
        \label{clm:block-intersect-bound}
        There exists~$i \in [m]$ such that every part of~$\Rc$ intersects $X_{i-1} \cup X_i \cup X_{i+1}$.
    \end{claim}
    \begin{claimproof}
        Let~$C_1$ be the first part of~$\Cc$,
        and consider~$i$ minimal such that $C_1 \subseteq X_0 \cup \dots \cup X_i$.
        Then there is no edge between~$C_1$ and~$X_j$ for~$j > i+1$,
        hence every~$R \in \Rc$ must intersect $X_0 \cup \dots \cup X_{i+1}$.
        Symmetrically, if~$C_l$ is the last part of~$\Cc$
        and~$j$ is maximal such that~$C_l \subseteq X_j \cup \dots \cup X_m$,
        we find that any~$R \in \Rc$ must intersect $X_{j-1} \cup \dots \cup X_m$.
        Thus any~$R \in \Rc$ must intersect $X_{j-1} \cup X_j \cup \dots \cup X_i \cup X_{i+1}$.
        But necessarily $i \le j$, hence any part~$R \in \Rc$ must intersect $X_{i-1} \cup X_i \cup X_{i+1}$.
    \end{claimproof}
    \begin{claim}
        \label{clm:subgrid}
        There exists~$j \in [m]$ such that at least~$\frac{l-4}{3}$ parts of~$\Rc$ are contained in~$X_j$.
    \end{claim}
    \begin{claimproof}
        By \cref{clm:block-intersect-bound},
        any part of~$\Rc$ intersects one of~$X_{i-1},X_i,X_{i+1}$.
        Excluding four parts at the borders of the latter,
        at least~$l-4$ parts of~$\Rc$ are contained in one of~$X_{i-1},X_i,X_{i+1}$.
        By the pigeonhole principle, at least~$\frac{l-4}{3}$ parts are contained in the same of these three subsets.
    \end{claimproof}

    Naturally, \cref{clm:block-intersect-bound,clm:subgrid} also hold for columns.
    Thus we obtain two sets~$X_i,X_j$ which contain at least $\frac{l-4}{3}$ parts of~$\Rc$ and~$\Cc$ respectively.
    This gives a~$\left(\frac{l-4}{3}\right)$-grid in one of the blocks of the adjacency matrix,
    which are $r$-grid free by hypothesis.
    It follows that $l \le 3r + 1$, i.e.\ the adjacency matrix is $(3r+2)$-grid free.
\end{proof}

\subsection{Subdivisions of sparse graphs}

We first consider sparse graphs.
A class~$\Cc$ of graphs is said to have \emph{bounded sparse twin-width}
if it has bounded twin-width, and it excludes a~biclique~$K_{t,t}$ as a~subgraph for some~$t$.
Bounded sparse twin-width classes were studied in \cite[section~7]{twin-width2}.
They are characterised by excluding grids in adjacency matrices.
\begin{theorem}[{\cite[Theorem~2.12]{twin-width2}}]
  Let~$G$ be a~$K_{t,t}$-subgraph-free graph of twin-width~$k$.
  Then there is some linear ordering~$<$ of the vertices of~$G$
  for which the adjacency matrix of~$G$ is $f(k,t)$-grid free, for some function~$f$.
  \label{thm:sparse-tww-grids}
\end{theorem}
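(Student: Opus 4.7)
The plan is to combine the mixed-minor-freeness result (Lemma \ref{lem:tww-mixed-minor}) with the $K_{t,t}$-subgraph-free hypothesis via a Ramsey-style extraction argument on grids. Concretely, I would first apply \cref{lem:tww-mixed-minor} to $G$ viewed as a binary relational structure on $V(G)$ with the edge relation, obtaining a linear order $<$ of $V(G)$ such that the adjacency matrix $M$ of $G$ ordered by $<$ is $(2k+2)$-mixed free. This $<$ is the candidate ordering for which I claim the grid bound holds.

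Next, I would argue by contradiction: suppose $M$ contains an $l$-grid witnessed by a division $(\Rc,\Cc)$ with $l$ to be chosen large in terms of $k$ and $t$. Each cell of this grid contains a $1$, so each cell is either \emph{horizontal} with a $1$, \emph{vertical} with a $1$, or \emph{mixed}. The mixed-free hypothesis prevents any $(2k+2) \times (2k+2)$ sub-division of $(\Rc,\Cc)$ from having all cells mixed. A density argument in the style of Marcus--Tardos then extracts a sub-grid $(\Rc',\Cc')$ of size $l' = l'(l,k)$, still large if $l$ is large, in which every cell is non-mixed; a further pigeonhole step ensures that all cells of this sub-grid are of the same type, say horizontal. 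Before this step I would merge any too-small row/column blocks so that every block of $(\Rc,\Cc)$ has size at~least $t$, losing only a multiplicative factor in $l$.

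Finally, I would convert a large purely-horizontal sub-grid into a $K_{t,t}$ subgraph. In a horizontal cell $R'_i \times C'_j$ with at~least one $1$, some row $\rho_{i,j} \in R'_i$ is all-$1$ over $C'_j$, because all columns of the cell are equal. Fix one column-block $C'_j$ of size at~least $t$; the $l'$ vertices $\rho_{1,j},\dots,\rho_{l',j}$ (from distinct row-blocks) are each adjacent to every vertex of $C'_j$. Choosing $t$ such $\rho_{i,j}$'s that are disjoint from $t$ vertices of~$C'_j$ (possible as soon as $l' \ge 2t$) yields a $K_{t,t}$ subgraph of~$G$, contradicting the hypothesis. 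The case where the monochromatic sub-grid is vertical is symmetric. The main obstacle is making the Ramsey/Marcus--Tardos extraction in the second step quantitative enough to yield an explicit function $f(k,t)$, and combining it cleanly with the preliminary block-size consolidation; the resulting $f(k,t)$ is typically at~least exponential in both parameters.
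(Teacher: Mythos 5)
The paper does not actually prove this statement; it imports it verbatim from \cite[Theorem~2.12]{twin-width2}, so there is no in-paper proof to compare against. Judged on its own, your plan---start from the $(2k+2)$-mixed-free order of \cref{lem:tww-mixed-minor}, use a Marcus--Tardos-type density bound to purge mixed cells from a hypothetical large grid, and then turn a non-mixed sub-grid into a $K_{t,t}$---is the natural route and very close in spirit to the argument in~\cite{twin-width2}. But two steps are not quite right as written.

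First, the block-merging step is misplaced. You say you would merge small blocks ``before this step'' (the pigeonhole step), i.e.\ \emph{after} the Marcus--Tardos extraction of a non-mixed sub-grid. That ordering is fatal: merging two non-mixed cells (even two horizontal ones whose all-1 rows differ) can produce a mixed cell, destroying the classification you have just established. The merging has to be done \emph{first}, on the original division $(\Rc,\Cc)$, before you ever look at whether cells are horizontal, vertical, or mixed. Greedily merging consecutive small blocks until each has size at least~$t$ keeps every cell nonempty of 1s and shrinks the grid only by a factor~$O(t)$, so this reordering is harmless but necessary.

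Second, ``a further pigeonhole step ensures that all cells of this sub-grid are of the same type'' is not a pigeonhole statement. Getting a sub-grid \emph{all} of whose cells are, say, horizontal is a bipartite Ramsey problem on the $l'\times l'$ colour matrix and costs exponentially in the target size; this can be made to work but is not ``pigeonhole''. Fortunately you do not need it: once mixed cells have density $O(1/l')$, at least half the cells are, say, horizontal, so some column block $C_j$ (of size $\ge t$ thanks to the merging) has $\ge l'/2$ horizontal cells in its column. Each such cell $R_i \cap C_j$ contributes a vertex $\rho_i \in R_i$ adjacent to \emph{all} of $C_j$. Either $\ge t$ of the $\rho_i$ lie outside $C_j$, giving a $K_{t,t}$ between them and $t$ vertices of $C_j$; or $\ge 2t$ of the $\rho_i$ lie inside $C_j$, in which case they are pairwise adjacent (each is adjacent to all of $C_j$ and hence to the others), forming a clique of size $2t\supseteq K_{t,t}$. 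This last dichotomy is a case you did not address (your argument implicitly assumes only one $\rho_i$ can fall inside $C_j$, which fails when $C_j$ swallows several row blocks), but as shown it still yields the contradiction. With these repairs the plan goes through and gives $f(k,t) = 2^{O(k)}\cdot t^{O(1)}\cdot t^{O(t)}$ or so, exponential as you anticipated.
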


\twwsubdivision*
\begin{proof}
    \newcommand{\dirE}{\vec{E}}
    Let~$G = (V,E)$ be $K_{t,t}$-subgraph-free with twin-width~$k$,
    and, applying \cref{thm:sparse-tww-grids}, consider a~linear ordering~$<$ on~$V$
    for which the adjacency matrix is $r$-grid free, with~$r$ function of~$k,t$ only.
    Fix an arbitrary orientation of the edges of~$G$, and denote by~$\dirE$ the set of oriented edges:
    for each~$uv \in E$, exactly one of~$(u,v)$ or~$(v,u)$ is in~$\dirE$.
    If~$\vec{e} = (u,v) \in \dirE$, we denote by~$s(\vec{e}) = u$ its starting point and by~$t(\vec{e}) = v$ its endpoint.

    We define two lexicographic orders on~$\dirE$:
    $<_s$ orders first by starting points (ordered by~$<$), and then by endpoints,
    while~$<_t$ orders first by endpoints, then by starting points.
    We consider the permutation~$\sigma = (\dirE,<_s,<_t)$.
    Let~$M_\sigma$ be the sparse permutation matrix of~$\sigma$:
    the columns are~$\dirE$ ordered by~$<_s$, while the rows are~$\dirE$ ordered by~$<_t$,
    and for each~$\vec{e} \in \dirE$, there is a~1 at the intersection of the row and the column corresponding to~$\vec{e}$.
    \begin{claim}
        \label{clm:halfedge-incidence-grid}
        The matrix~$M_\sigma$ is $3r$-grid free.
    \end{claim}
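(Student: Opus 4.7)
The plan is to prove the contrapositive: any $3r$-grid in $M_\sigma$ would produce an $r$-grid in the adjacency matrix of $G$, contradicting the hypothesis. So I assume $M_\sigma$ admits an $l$-grid with column partition $(C_1,\dots,C_l)$ in $<_s$ order and row partition $(R_1,\dots,R_l)$ in $<_t$ order, with $l \ge 3r$. For each column $C_j$ I would associate the interval $I_j = [a_j, b_j] \subseteq V$ between the smallest and largest starting vertex of edges of $C_j$, and symmetrically an interval $J_i = [c_i, d_i]$ built from the endpoints of edges of $R_i$. Because $<_s$ is lexicographic in $(s(\vec e), t(\vec e))$, the consecutive column intervals satisfy $b_j \le a_{j+1}$, with equality exactly when the cut between $C_j$ and $C_{j+1}$ splits the outgoing edges of a single vertex of $V$; the same holds for rows.

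My plan is to group columns into consecutive triples $G_k = C_{3k-2} \cup C_{3k-1} \cup C_{3k}$ and rows into triples $T_i$ similarly, yielding $\floor{l/3} \ge r$ super-columns and super-rows with tentative intervals $[a_{3k-2}, b_{3k}]$ in $V$ for columns and $[c_{3i-2}, d_{3i}]$ for rows. Consecutive super-column intervals can share at most one boundary vertex, so assigning each such shared vertex to one side produces a partition of $V$ into $\floor{l/3}$ intervals, and symmetrically for rows. Each super-cell $(T_i, G_k)$ aggregates at least $3 \times 3 = 9$ underlying cells of the $l$-grid, each witnessing a directed edge of $G$, so in principle I have abundant candidates to populate the corresponding cell of the $\floor{l/3}$-grid in the adjacency matrix of $G$.

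The hard part will be guaranteeing that every super-cell retains a valid witness after the boundary vertices have been reassigned, so that the selected edge has its starting vertex inside the super-column interval and its endpoint inside the super-row interval. The critical case is when a single vertex $v$ equals the shared boundary $b_{3k} = a_{3(k+1)-2}$ between consecutive super-columns and the buffer columns simultaneously collapse to $V^s = \{v\}$. My plan is to leverage the middle column $C_{3k-1}$ (and symmetrically the middle row $R_{3i-1}$) as a buffer: either it contains a starting vertex strictly interior to the super-column interval, giving a safe witness, or it degenerates to $\{v\}$ across many consecutive columns, in which case $M_\sigma$ restricted to those columns has the structure of a diagonal matrix, which cannot support an $l$-grid for $l \ge 3$ (the first row interval would need to intersect every column interval, forcing contradictory spans). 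This structural obstruction precludes long degeneracy chains and ensures each super-cell retains a safe witness; combining the resulting column and row partitions then yields the desired $\floor{l/3}$-grid in the adjacency matrix of $G$, contradicting the hypothesis.
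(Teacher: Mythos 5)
Your overall plan — prove the contrapositive, project column intervals to starting vertices and row intervals to endpoints, extract an $r$-grid in the adjacency matrix of $G$ — is the same as the paper's. You also correctly identify the key structural fact, namely that the transition between consecutive column parts with $b_j = a_{j+1}$ corresponds to a cut inside the out-edges of a single vertex. But from there you take a different and, as sketched, gappy route. The paper does \emph{not} union triples of columns into super-columns; it \emph{selects one column part out of every three}, and proves cleanly that this already gives disjoint $s$-projections. The key lemma is: inside the set $P = s^{-1}(v)$ of edges with a fixed starting vertex $v$, the orders $<_s$ and $<_t$ coincide (lexicographic with the first coordinate fixed), so the submatrix of $M_\sigma$ on columns in $P$ is monotone and has no $2$-grid; hence at most one column part of the $3r$-grid can be wholly contained in $P$, and by the interval structure at most three consecutive column parts can intersect $P$. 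Picking every third column part therefore yields $r$ parts whose $s$-projections are pairwise disjoint intervals of $(V,<)$, and symmetrically for rows; any edge witnessing a cell of the resulting $r\times r$ subgrid of $M_\sigma$ directly witnesses the corresponding cell of an $r$-grid in the adjacency matrix of $G$.

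Your union-into-super-columns variant has a real hole precisely at the step you flag as hard. Even when the middle column $C_{3k-1}$ has \emph{some} starting vertex strictly interior to $[a_{3k-2},b_{3k}]$, the specific edge you extract from the super-cell (say one in $R_{3i-1}\cap C_{3k-1}$) may nevertheless start at the boundary vertex that was reassigned away, since $s(C_{3k-1})$ not being a singleton does not force every edge of $C_{3k-1}$ to avoid that vertex. You would also need the \emph{same} witness edge to simultaneously have a safe endpoint for the row side, which your argument does not coordinate. Finally, the degenerate case is stated too loosely: ``a diagonal matrix cannot support an $l$-grid for $l\ge 3$'' is not quite the contradiction you need. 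The precise contradiction is that if $s(C_{3k-1}) = \{v\}$ and $v$ coincides with a shared boundary, then the adjacent column part is also entirely contained in $s^{-1}(v)$, and two full column parts inside the monotone submatrix on $s^{-1}(v)$, together with any two row parts, form a $2$-grid in a monotone matrix — impossible. This is exactly the ``no two column parts contained in the same $P$'' lemma that the paper isolates. If you state and use that lemma explicitly, the cleanest fix is to abandon unioning and just pick one column and one row part per triple, as the paper does; this removes the boundary-reassignment issue entirely.
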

    \begin{claimproof}
        Let~$\Cc,\Rc$ be partitions of~$\dirE$ into intervals of~$<_s$ and~$<_t$ respectively, inducing a~$3r$-grid,
        i.e.\ for each~$C \in \Cc,R \in \Rc$, there is some~$\vec{e} \in C \cap R$.
        Consider~$\Pc_s$ the partition of~$\dirE$ which groups edges with the same starting point,
        i.e.~$\Pc_s = \{s^{-1}(v) \ : \ v \in V\}$.
        This is also a~partition of~$\dirE$ into intervals of~$<_s$.
        Remark that inside each~$P \in \Pc_s$, the orders~$<_s$ and~$<_t$ coincide,
        hence the matrix~$M_\sigma$ restricted to the columns in~$P$ has no 2-grid.
        It follows that there cannot be two distinct parts~$C,C' \in \Cc$ such that~$C,C' \subseteq P$,
        as~$\Rc,\{C,C'\}$ would yield a~2-grid using only columns of~$P$.
        Therefore, it is impossible to have more than three parts of~$\Cc$ intersecting the same part~$P \in \Pc_s$.
        Naturally, the same applies to~$\Rc$ and the partition by endpoints~$\Pc_t := \{t^{-1}(v) \ : \ v \in V\}$.

        We then pick every third part of~$\Cc$ and of~$\Rc$, yielding subsets~$\Cc' \subset \Cc$ and~$\Rc' \subset \Rc$
        such that each part~$P \in \Pc_s$ (resp.~$\Pc_t$) intersects at most one part of~$\Cc'$ (resp.~$\Rc'$).
        The families of intervals~$\Cc',\Rc'$ have size~$r$, and induce an $r$-grid in the matrix~$M_\sigma$.
        By projecting on starting points and endpoints respectively,
        we obtain~$\Cc'' := \{s(C) \ : \ C \in \Cc'\}$ and~$\Rc'' := \{t(R) \ : \ R \in \Rc'\}$,
        two families of~$r$ disjoint intervals of~$(V,<)$.
        For any~$s(C) \in \Cc''$, $t(R) \in \Rc''$,
        there is an edge~$\vec{e} \in C \cap R$
        hence~$s(\vec{e}) \in s(C)$ is adjacent to~$t(\vec{e}) \in t(R)$.
        This proves that~$\Cc'',\Rc''$ define an $r$-grid in the adjacency matrix of~$G$, a~contradiction.
    \end{claimproof}

    \Cref{clm:halfedge-incidence-grid} implies that~$\sigma$ avoids a~pattern of size~$(3r)^2$,
    hence by \cref{thm:main} we obtain a~factorisation of~$\sigma$ it into~$m = 2^{2^{O(r^2)}}$
    separable permutations,
    as $\sigma = \sigma_m \circ \dots \circ \sigma_1$.
    Consider the path system representation of this factorisation,
    with vertex set~$X_0 \uplus \dots \uplus X_m$ as in the previous section.
    The sets~$X_0,X_m$ ordered canonically are in bijection with~$(\dirE,<_s)$ and~$(\dirE,<_t)$ respectively,
    and for each~$\vec{e} \in \dirE$, there is a~path joining the copy of~$\vec{e}$ in~$X_0$
    to its copy in~$X_m$.
    We now add the vertices~$V$ of~$G$ to this structure,
    and for each edge~$\vec{e} \in \dirE$, we connect the copy of~$\vec{e}$ in~$X_0$ to~$s(\vec{e})$,
    and the copy of~$\vec{e}$ in~$X_m$ to~$t(\vec{e})$.
    Thus, for each edge~$\vec{e} \in \dirE$, a~path on~$m$ vertices joins~$s(\vec{e})$ to~$t(\vec{e})$,
    hence the resulting graph is the $(m+1)$-subdivision of~$G$.
    Furthermore, for each~$v \in V$, the neighbourhood of~$v$ inside~$X_0$, resp.~$X_m$,
    is an interval of the canonical linear order.

    To bound the twin-width of this structure, we consider the following linear order on the vertices:
    $V$ is ordered by~$<$, the path system representation of~$\sigma$ is ordered by the canonical linear order,
    and we place all vertices of~$V$ before the rest.
    The adjacency matrix of this graph then consists of
    \begin{enumerate}
        \item the adjacency matrix of the path system representation,
        which is 11-grid free using \cref{lem:grid-path-system}, because separable permutations have no 3-grid, and
        \item the adjacency matrix of~$V$ against~$X_0$ and~$X_m$,
        which consists of two increasing sequences (one for~$X_0$, one for~$X_m$),
        and can be seen to be 3-grid free.
    \end{enumerate}
    This implies that the adjacency matrix of the~$(m+1)$-subdivision of~$G$ is 15-grid free.
\end{proof}

\subsection{Transducing structures of bounded twin-width}
Finally, let us generalise the results of the former section by replacing subdivisions with first-order transductions.
We first show that permutations of twin-width~$t$ can be encoded into
structures of twin-width bounded by a~constant~$c$,
where the decoding is an FO transduction which depends on~$t$, but~$c$ is independent of~$t$.

\begin{lemma}
    \label{lem:perm-FO-desc}
    There is a~universal constant~$c$, and for any pattern~$\pi$ there is an FO interpretation~$\Phi$
    such that for any permutation~$\sigma$ avoiding~$\pi$,
    there is a~structure~$S$ of twin-width at most~$c$ such that~$\Phi(S) = \sigma$.
\end{lemma}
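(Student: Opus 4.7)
The plan is to combine the main theorem (\cref{thm:main}) with the path system representation introduced in \cref{sec:graphs}. Given $\sigma$ avoiding $\pi$, I would first apply \cref{thm:main} to factor $\sigma = \sigma_m \circ \cdots \circ \sigma_1$ into $m = k_\pi$ separable permutations. Let $P$ be the path system representation of this factorisation, with vertex set $X_0 \uplus \cdots \uplus X_m$ and canonical linear order $<_c$, and take $S = (V(P), E(P), <_c)$ as the binary structure meant to encode $\sigma$.

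Bounding the twin-width of $S$ by a universal constant is the critical step. Applying \cref{lem:grid-path-system} with $r = 3$ (since separable permutations are $3$-grid free, as already used in the proof of \cref{thm:sparsetww-subdivision}) shows that the adjacency matrix of $E(P)$ ordered by $<_c$ is $11$-grid free. Because $P$ is a disjoint union of paths of length $m$, it is $K_{2,2}$-subgraph free, so the standard implication from bounded grids to bounded twin-width for sparse structures yields a twin-width bound depending only on $11$. Adjoining $<_c$ as a second relation preserves bounded twin-width, since both relations are controlled by the same partition sequence obtained by merging consecutive intervals of $<_c$. The resulting bound $c$ is absolute, independent of $m$ and of $\pi$.

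Next I would build the FO interpretation $\Phi$ recovering $\sigma$ from $S$. Assume $m \ge 2$; the tiny cases $m \le 1$ are handled by an ad hoc interpretation, since $\sigma$ is then separable (or the identity). Each $X_i$ is an interval of $<_c$ with $X_0 < \cdots < X_m$; the vertices in $X_0 \cup X_m$ are exactly the degree-one vertices of $E(P)$; and $P$ consists of $\card{X}$ vertex-disjoint paths of length $m$, each joining some $x_0 \in X_0$ to the image $\sigma(x)_m \in X_m$. The domain formula $\phi_{dom}(x)$ expresses ``$x$ is a degree-one vertex of $E(P)$ smaller in $<_c$ than every degree-two vertex'', which isolates $X_0$. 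The formula $\phi_{<_1}(x,y)$ is simply $x <_c y$. The formula $\phi_{<_2}(x,y)$ asserts the existence of witnesses $x', y'$ linked to $x, y$ respectively by $E(P)$-paths of length exactly $m$ (a property expressible with $O(m)$ existential quantifiers, which is allowed since $m$ depends only on $\pi$), together with $x' <_c y'$. Uniqueness of such $x'$ and $y'$ follows from the disjoint-paths structure of $P$, so that $x'$ and $y'$ are forced to be the $X_m$-endpoints of the paths starting at $x$ and $y$.

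The main obstacle is the promised uniformity of $c$: the length $m = k_\pi$ of the factorisation, which can be doubly exponential in $\card{\pi}$, must appear only in the FO interpretation and not in the twin-width bound of $S$. This is exactly what \cref{lem:grid-path-system} delivers, since its grid-freeness bound $3r+2$ depends only on the grid-freeness $r$ of the factors, which here is the absolute constant $3$ coming from separability, and not on the number of factors.
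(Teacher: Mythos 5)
Your proposal is correct and follows essentially the same approach as the paper: take the path system representation of the factorisation from \cref{thm:main} as the structure $S$, bound its twin-width by a universal constant via \cref{lem:grid-path-system} and the $3$-grid-freeness of separable permutations, and reconstruct $\sigma$ as a biorder on $X_0$ using an FO formula that traces paths of length $m$ (with $m$ depending only on $\pi$). You are slightly more explicit than the paper in justifying the passage from grid-freeness to bounded twin-width, but the argument and its structure are the same.
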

\begin{proof}
    The structure~$S = (V,E,<)$ is the path system representation
    of the decomposition of~$\sigma$ into separable permutations obtained by \cref{thm:main}.
    It is a~binary relational structure, with two relations:
    the edges~$E$, and the canonical order~$<$.
    By \cref{lem:grid-path-system}, since separable permutations do not contain 3-grids,
    the adjacency matrix of~$S$ has no 11-grid. This gives a~universal bound on its twin-width.

    Let us show, given the path system representation~$S$
    of the factorisation~$\sigma = \sigma_m \circ \dots \circ \sigma_1$,
    how to reconstruct the biorder~$\sigma$ using an FO interpretation.
    As previously, the sets of starting points and endpoints of paths in~$S$ are denoted by~$X_0$ and~$X_m$ respectively.
    Remark that the vertices in~$X_0 \cup X_m$ are exactly the ones with degree~1 in the edge relation~$E$,
    and that any other vertex~$x \not\in X_0 \cup X_m$ satisfies $X_0 < x < X_m$.
    This allows to test whether a~given vertex is in~$X_0$, or in~$X_m$.
    We construct a~biorder~$(X_0,<,\prec)$ isomorphic to~$\sigma$,
    where~$<$ is the already given canonical order, and~$\prec$ is defined as follows.
    Consider~$x_1,x_2 \in X_0$, and denote by~$y_i \in X_m$ the other endpoint of the path starting from~$x_i$.
    Then $x_1 \prec x_2$ if and only if $y_1 < y_2$.

    The vertex~$y_i$ is the only one in~$X_m$ connected by a~path of length~$m$ to~$x_i$.
    For fixed~$m$, `being connected by a~path of length~$m$' can be expressed by an FO formula,
    thus allowing, given~$x_1,x_2$, to identify~$y_1,y_2$.
    It follows that~$\prec$ can be defined by an FO formula,
    hence there is an FO interpretation~$\Phi$ which maps~$S$ to~$\sigma$.
    This interpretation only depends on the length~$m$ of the factorisation,
    which itself is function of the excluded pattern~$\pi$, as desired.
\end{proof}

\FOdesc*
\begin{proof}
    Firstly, the `if' part of the claim follows from the fact that the (yet to be defined) class~$\Cc$
    avoids a~pattern, hence has bounded twin-width by \cref{thm:pattern-tww},
    and transductions preserve bounded twin-width by \cref{thm:transduction-tww}.

    For the other direction, given the class~$\Dc$, we first apply \cref{thm:FO-desc2}
    to obtain a~class~$\Cc_3$ of permutations with bounded twin-width,
    and~$\Phi_3$ an FO transduction such that~$\Dc \subseteq \Phi_3(\Cc_3)$.
    Next we apply \cref{lem:perm-FO-desc}
    to obtain a~class~$\Cc_2$ of structures with twin-width bounded by a~universal constant~$c$,
    and~$\Phi_2$ an FO transduction such that~$\Cc_3 \subseteq \Phi_2(\Cc_2)$.
    Note that~$\Cc_2$ is independent of~$\Dc$.
    Finally, we apply \cref{thm:FO-desc2} a~second time
    to obtain a~class~$\Cc$ of permutations with twin-width bounded by a~universal constant~$c'$ (function of~$c$ only),
    and~$\Phi_1$ an FO transduction such that~$\Cc_2 \subseteq \Phi_1(\Cc)$.
    We conclude by composing the transductions as
    $\Cc \stackrel{\Phi_1}{\longrightarrow}
        \Cc_2 \stackrel{\Phi_2}{\longrightarrow}
        \Cc_3 \stackrel{\Phi_3}{\longrightarrow} \Dc
    $.
\end{proof}

\section*{Acknowledgments}
The authors would like to thank Patrice Ossona de Mendez for interesting discussions regarding the subject of \cref{sec:graphs}.
This work was supported by the ANR projects TWIN-WIDTH (ANR-21-CE48-0014) and Digraphs (ANR-19-CE48-0013).

\bibliographystyle{plain}

\end{document}